\tikzstyle{vertex}=[circle, draw, inner sep=0pt, minimum size=6pt]
\newcommand{\vertex}{\node[vertex]}
\newtheorem{thm}{Theorem}[section]
\newtheorem{con}[thm]{Conjecture}
\newtheorem{cor}[thm]{Corollary}
\newtheorem{prop}[thm]{Proposition}
\newtheorem{note}[thm]{Note}
\newtheorem{defi}[thm]{Definition}
\newtheorem{lem}[thm]{Lemma}
\newtheorem{ex}[thm]{Example}
\newtheorem{rem}[thm]{Remark}
\newcommand{\OriSquare}[5]
{
\begin{xy}
(0,0)="Pos";
"Pos"+(1,0) **@{-}; ?*h!U!/^2pt/{\text{\scriptsize #5}}, 
"Pos"+(1,1) **@{-}; ?*h!L!/^1pt/{\text{\scriptsize #2}},
"Pos"+(0,1) **@{-}; ?*h!D!/^1pt/{\text{\scriptsize #3}},
"Pos" **@{-}; ?*h!R!/^1pt/{\text{\scriptsize #4}},
"Pos"+(0.5,0.5) *h{\text{\scriptsize #1}};
\end{xy}
}
\newcommand{\R}{{\mathbb R}}
\newcommand{\Z}{{\mathbb Z}}
\newcommand{\N}{{\mathbb N}}
\newcommand{\RR}{\mathbb{R}}
\newcommand{\ZZ}{\mathbb{Z}}
\newcommand{\HHH}{\mathcal{H}}
\newcommand{\hex}{\mathop{Hex}}
\newcommand{\fs}{\footnotesize}
\newcommand{\GIG}{\mathcal{G}_{\Gamma}}
\newcommand{\blackcircle}[2][black,fill=black]{\tikz[baseline=-0.5ex]\draw[#1,radius=#2] (0,0) circle ;}%
\newcommand{\whitecircle}[2][black]{\tikz[baseline=-0.5ex]\draw[#1,radius=#2] (0,0) circle ;}%
\DeclareMathOperator{\sy}{sys}
\DeclareMathOperator{\sr}{SR}
\DeclareMathOperator{\syh}{sys_h}
\DeclareMathOperator{\srh}{SR_h}
\DeclareMathOperator{\dist}{dist}
\DeclareMathOperator{\area}{area}
\DeclareMathOperator{\cat}{CAT}
\DeclareMathOperator{\iso}{Isom}
\DeclareMathOperator{\SL}{SL}
\newcommand{\sm}{\scriptscriptstyle}
\newcommand{\smax}{s_{\fs max}}
\newcommand*\circled[1]{\tikz[baseline=(char.base)]{%
    \node[shape=circle,draw,inner sep=1.5pt] (char) {#1};}}
\title{Systolic geometry of translation surfaces}
\date{}
\author{Tobias Columbus, Frank Herrlich, Bjoern Muetzel and Gabriela Weitze-Schmith\"usen}
\begin{document}
\maketitle

\begin{abstract}
  In this paper we investigate the systolic landscape of translation surfaces for fixed genus and fixed angles of their cone points. We furthermore study how the systoles of a translation surface relate to the systoles of its graph of saddle connections. This allows us to develop an algorithm to compute the systolic ratio of origamis in the stratum  $\mathcal{H}(1,1)$. We compute the maximal systolic ratio of all origamis in $\mathcal{H}(1,1)$ with up to 67 squares. These computations support a conjecture of Judge and Parlier about the maximal systolic ratio in $\mathcal{H}(1,1)$.\\
\\
Keywords:  translation surfaces, systoles, maximal surfaces.\\
Mathematics Subject Classification (2010): 30F10, 32G15 and 53C22.
\end{abstract}

\section{Introduction}
\textit{Translation surfaces} are closed flat surfaces $S$ with singularities or cone points $(p_i)_{i=1 ,\ldots, n}$ with cone angle $2\pi \cdot (k_i+1)$ at $p_i$, where $k_i \in \N$.
They have been intensively studied from the perspective of dynamical systems, algebraic geometry and geometric group
theory for about 30 years now, see for example \cite{fm}, \cite{wr} and \cite{zo} for overview articles.\\
It follows from the Euler characteristic of the surface that 
\begin{equation}
      \sum_{i=1}^n k_i = 2g-2.
\label{eq:Euler}
\end{equation}
In this article we restrict to translation surfaces of genus $g \geq 2$.
It follows from the formula above that the space of translation surfaces of genus $g \geq 2$ can be subdivided into admissible \textit{strata} $\mathcal{H}(k_1 ,\ldots, k_n)$, such that the $(k_i)_{i=1 ,\ldots, n}$ satisfy Equation (\ref{eq:Euler}). The moduli space $\mathcal{M}^{tr}_g$ of translation surfaces of genus $g \geq 2$ has real dimension $8g-6$. The moduli space $\mathcal{M}	(\mathcal{H}(k_1 ,\ldots, k_n))$ of the surfaces in the stratum $\mathcal{H}(k_1 ,\ldots, k_n)$ is an orbifold of real dimension $4g+2n-2$ (see \cite{kz}):
\[
           \dim(\mathcal{M}^{tr}_g)= 8g-6   \text{ \ and \ }\dim( \mathcal{M}	(\mathcal{H}(k_1 ,\ldots, k_n)))= 4g+2n-2.
\]
We note that the stratum $\mathcal{H}(1,1 ,\ldots, 1)$ is the largest and the only one with full dimension.\\ 
A \textit{systole} of a translation surface $S$ is a shortest simple closed geodesic. We denote by $\sy(S)$ its length. The aim of this article is to investigate the systolic landscape of these surfaces. The leitmotif for the proofs of the geometric inequalities of Section 2 and 3 is that translation surfaces are $\cat(0)$ spaces. Therefore these surfaces share many properties of surfaces with non-positive curvature and especially the well-studied hyperbolic surfaces (see, for example, \cite{ak},  \cite{ba}, \cite{bu1}, \cite{bu2},  \cite{ge}, \cite{pa1}, \cite{ra} \cite{sc1}, \cite{sc2}  and \cite{sc3}). The main idea of these sections is therefore to tweak and carry over the corresponding results for compact hyperbolic surfaces to the realm of translation surfaces. One property of translation surfaces is that there is always a systole that passes through a cone point. We furthermore show in Section 2 that large systoles always have large collars in the systolic collar lemma (see Lemma \ref{thm:cyl_sys}).\\
Normalizing by the area $\area(S)$ of $S$ we obtain $\sr(S) = \frac{\sy(S)^2}{\area(S)}$, the \textit{systolic ratio} of $S$, which is invariant under scaling of $S$. Let 
\[
     \sr^{tr}(g) = \sup \{ \sr(S) \mid S \text{ translation surface of genus } g \}.
\]
be the \textit{supremal systolic ratio in genus} $g$. We also define the \textit{homological systole}, which is a shortest homologically non-trivial loop in $S$. This is a shortest non-contractible loop that does not separate $S$ into two parts. We denote by $\syh(S)$ its length and define $\srh(S) =\frac{\syh(S)^2}{\area(S)}$ as the \textit{homological systolic ratio}. Let

\[
     \sr_h^{tr}(g) = \sup \{ \srh(S) \mid S \text{ translation surface of genus } g  \}
\]
be the \textit{supremal homological systolic ratio in genus} $g$. It follows that for any surface $S$ 
\begin{equation}
      \sy(S) \leq \syh(S), \text{ \ \ hence \ \ }  \sr^{tr}(g) \leq \sr_h^{tr}(g) \leq \frac{(\log(195g)+8)^2}{\pi (g-1)} \text{ \ \ for \ } g \geq 76.
\label{eq:srh_up}      
\end{equation}
Here the upper bound follows from \cite{am1}, Theorem 1.3-3. The inequality stated there is valid for any smooth Riemannian surface. It also applies in the case of translation surfaces, as any translation surface can be approximated by smooth Riemannian surfaces. This means that systolic ratio in genus $g$ can only be of order $\frac{\log^2(g)}{g}$. In the case of the systolic ratio for the genus it is clear that this is indeed a maximum (see Section 3). We call a surface $S_{max}$ \textit{maximal}, if $\sr^{tr}(g)$ is attained in this surface. Maximal surfaces have the following property:

\begin{thm} Let $S_{max}$ be a maximal translation surface of genus $g \geq 1$. Then every simple closed geodesic, that does not run through a cone point is intersected by a systole of $S_{max}$. 
\label{thm:char_Smax}
\end{thm}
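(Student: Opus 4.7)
The plan is to argue by contradiction via a cylinder-shrinking deformation. Suppose $S_{max}$ carries a simple closed geodesic $\gamma$ that avoids all cone points and is disjoint from every systole; since $\gamma$ avoids the cone points, it sits in the interior of a maximal flat cylinder $C \subset S_{max}$ of core length $\ell(\gamma)$ and some height $h > 0$, whose boundary consists of saddle connections all parallel to $\gamma$. I would define a continuous one-parameter family $(S_t)_{t \in [0, h)}$ by replacing $C$ with a cylinder $C_t$ of core length $\ell(\gamma)$, height $h - t$, and the same boundary identifications. Each $S_t$ is a translation surface in the same stratum as $S_{max}$, and
\[
\area(S_t) \;=\; \area(S_{max}) - t\,\ell(\gamma).
\]

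Writing $L = \sy(S_{max})$, the central claim is that $\sy(S_t) = L$ for all sufficiently small $t > 0$. For any free homotopy class $[\delta]$ whose geodesic representative in $S_{max}$ crosses $C$ in $k$ transverse segments with horizontal displacements $d_1, \ldots, d_k$, each segment shrinks from length $\sqrt{d_i^2 + h^2}$ to at least $\sqrt{d_i^2 + (h-t)^2} \geq (1 - t/h)\sqrt{d_i^2 + h^2}$, while the portion outside $C$ is unaltered; this yields the uniform estimate
\[
\ell_{S_t}([\delta]) \;\geq\; \bigl(1 - t/h\bigr)\,\ell_{S_{max}}([\delta]).
\]
Only finitely many homotopy classes have length at most $2L$ in $S_{max}$, so the non-systoles among them satisfy $\ell_{S_{max}} \geq L + \eta$ for some $\eta > 0$ and thus remain strictly above $L$ in $S_t$ for small $t$; classes with $\ell_{S_{max}} > 2L$ also stay above $L$ once $t < h/2$ by the displayed estimate. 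By hypothesis every systole of $S_{max}$ is either disjoint from $C$ or a closed geodesic parallel to the core of $C$, so its length is preserved by the shrink and equals $L$ in each $S_t$. Combining these observations gives $\sy(S_t) = L$ for $t$ small, hence
\[
\sr(S_t) \;=\; \frac{L^2}{\area(S_{max}) - t\,\ell(\gamma)} \;>\; \frac{L^2}{\area(S_{max})} \;=\; \sr(g,tr),
\]
contradicting the maximality of $S_{max}$.

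The principal obstacle is the uniform systole bound above: the per-class length estimate is an elementary trigonometric calculation, but one must exclude sequences of non-systole classes whose $S_{max}$-lengths accumulate at $L$ from above, which is why the finiteness of simple closed geodesics below a fixed length threshold on a translation surface is invoked. A secondary technical point is verifying that the shrinking preserves the gluing combinatorics and yields a translation surface in the same stratum; this is immediate because the boundary saddle connections of $C$ are horizontal in the cylinder frame and their lengths are unaffected by the deformation.
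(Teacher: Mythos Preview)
Your approach is essentially identical to the paper's: assume a regular closed geodesic $\gamma$ meets no systole, shrink the flat cylinder containing it, use discreteness of the length spectrum to ensure no non-systole drops below $L$ while every systole (being disjoint from or parallel to the core) keeps its length, and contradict maximality via the strictly smaller area. One small slip worth fixing: as written, your length computation pushes the $S_{max}$-geodesic forward into $S_t$ and bounds the resulting curve from below, but that curve only gives an \emph{upper} bound for $\ell_{S_t}([\delta])$; to get the displayed lower bound you should instead pull the $S_t$-geodesic back through the $\tfrac{h}{h-t}$-Lipschitz ``unshrinking'' map $S_t\to S_{max}$ and compare with $\ell_{S_{max}}([\delta])$.
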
 
For a fixed stratum $\mathcal{H}(K)$, where $K=(k_1 ,\ldots, k_n)$ we define in a similar fashion
\begin{eqnarray*}
     \sr(\mathcal{H}(K)) = \sup \{ \sr(S) \mid S \text{ translation surface in } \mathcal{H}(K)\}
\end{eqnarray*}
and  $\srh(\mathcal{H}(K))$. 
\\
In this case it is not clear whether this is a maximum or a supremum. The problem is that two or more cone points might merge in a sequence of surfaces in which the systolic ratio goes to the limit.  
\\
Concerning all these invariants surprisingly little is known in the case of translation surfaces. In the case of the systolic ratio of genus $g$ only the case of genus one is clear. In the case of flat tori $\sr^{tr}(1) = \frac{2}{\sqrt{3}}$. In this case the maximal surface is the equilateral torus, that has a hexagonal lattice. In genus two Judge and Parlier conjecture in \cite{jp} that the surface $\hex_2$ obtained by gluing parallel sides of two isometric cyclic hexagons is maximal. The systolic ratio of this surface is 
\begin{equation}
        \sr({\hex}_2) = 0.58404...  .        
\label{eq:sr_hex2}        
\end{equation}
In this article they also show that in the case of the stratum $\mathcal{H}(2g-2)$ the maximum is attained in surfaces $\triangle_g$ of genus $g$ composed of equilateral triangles and that
\begin{equation}
         \sr(\triangle_g)  = \sr(\mathcal{H}(2g-2)) = \frac{4}{\sqrt{3} \cdot (4g-2)}.
\label{eq:sr_H2g-2}         
\end{equation}
Concerning the lower bound of $\sr^{tr}(g)$ we show in this article:

\begin{thm}[Intersystolic inequalities] Let $ \sr^{tr}(g)$ and  $\sr_h^{tr}(g)$ be the supremal systolic ratio and homological systolic ratio in genus $g$. Then 
\[
                \frac{\sr^{tr}(g)}{k}  \leq   \sr^{tr}(k(g-1)+1)   \text{ \ and \ }   \frac{\sr_h^{tr}(g)}{k}  \leq   \sr_h^{tr}(k(g-1)+1).
\]
\label{thm:sr_cyclic}
\end{thm}

From Theorem \ref{thm:sr_cyclic} and Equation \eqref{eq:srh_up} and  \eqref{eq:sr_hex2} we conclude: 

\begin{cor} Let $ \sr^{tr}(g)$ and  $\sr_h^{tr}(g)$ be the supremal systolic ratio and homological systolic ratio in genus $g$, respectively. Then 

\[
                0.584  \leq   \sr^{tr}(2)     \text{ \ \ hence \ \ }     \frac{0.58}{g-1}  \leq   \sr^{tr}(g) \leq \sr_h^{tr}(g) \text{ \ for all \ } g \geq 3.
\]
\label{cor:sr_cyclic}
\end{cor}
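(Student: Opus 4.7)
The corollary is essentially a direct combination of the previously stated results, so the plan is short and mechanical. The strategy is to first secure the genus-$2$ lower bound, then bootstrap it to arbitrary genus via the intersystolic inequality.

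First I would deduce the bound $0.584 \leq \sr(2, tr)$ from the existence of the explicit example $\hex_2$. Since $\hex_2$ is a concrete translation surface of genus $2$, it is admissible in the supremum defining $\sr(2,tr)$, and from Equation \eqref{eq:sr_hex2} we have $\sr(\hex_2) = 0.58404\ldots$, whence
\[
     \sr(2,tr) \geq \sr(\hex_2) = 0.58404\ldots > 0.584.
\]

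Next I would apply Theorem \ref{thm:sr_cyclic} to propagate this bound to every higher genus. Setting the theorem's parameter $g$ equal to $2$ and writing $k = G-1$ for the target genus $G \geq 3$, the formula $k(g-1)+1$ becomes $(G-1)\cdot 1 + 1 = G$, so the first intersystolic inequality yields
\[
    \frac{\sr(2,tr)}{G-1} \leq \sr((G-1)(2-1)+1, tr) = \sr(G, tr).
\]
Substituting the genus-$2$ bound just established gives $\sr(G, tr) \geq \frac{0.584}{G-1} \geq \frac{0.58}{G-1}$ for every $G \geq 3$. The remaining inequality $\sr(G, tr) \leq \srh(G, tr)$ is exactly the first inequality recorded in Equation \eqref{eq:srh_up}, which holds on any surface since a homological systole is simple and closed and therefore a candidate in the infimum defining $\sy$.

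There is no real obstacle here; the only thing to verify carefully is the bookkeeping in the substitution into Theorem \ref{thm:sr_cyclic} (matching the theorem's $g$ with genus $2$ and its $k$ with $G-1$), and the slight loss from $0.58404\ldots$ down to $0.58$, which is comfortably absorbed and explains the simpler constant in the final statement.
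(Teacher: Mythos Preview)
Your argument is correct and matches the paper's own reasoning: the paper simply states that the corollary follows from Theorem~\ref{thm:sr_cyclic} together with Equations~\eqref{eq:srh_up} and~\eqref{eq:sr_hex2}, and your write-up spells out exactly that deduction (specializing $g=2$, $k=G-1$ in the intersystolic inequality and invoking the $\hex_2$ example). The only cosmetic remark is that the justification for $\sr(G,tr)\leq \srh(G,tr)$ is cleaner if phrased as ``homologically non-trivial implies homotopically non-trivial, so the infimum over the smaller class is at least the infimum over the larger one,'' rather than via simplicity of the homological systole.
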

To obtain this theorem we construct explicitly cyclic covering surfaces of genus $k(g-1) +1$ for a given surface of genus $g$. The theorem then follows from the fact that the length of a systole does not decrease in a covering surface. As we can also control the stratum of the covering surface a similar theorem for strata is stated in Corollary \ref{thm:sr_cyclic_strata}. Using a simple area argument we also show that 

\begin{thm}[Area estimate] Let $S$ be a translation surface in the stratum $\mathcal{H}(K)$, for $K=(k_1 ,\ldots, k_n)$, such that  
$k_1 \leq k_2 \leq \ldots\leq k_n$. Then
\[
    \sr(\mathcal{H}(K)) \leq \frac{4}{\pi \cdot (k_n+1)}.
\]      
\label{thm:SR_stratum}
\end{thm}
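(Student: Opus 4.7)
The strategy is a local area comparison at the cone point $p_n$ of largest order. Around $p_n$ the surface $S$ is locally isometric to the apex neighbourhood of a flat Euclidean cone of total angle $\theta_n = 2\pi(k_n+1)$, in which the metric ball of radius $r$ about the apex has area $\tfrac12 \theta_n r^2 = \pi(k_n+1) r^2$. The plan is to show that for every $r < \sy(S)/2$ this flat cone disk embeds isometrically into $S$, so that $\area(S) \geq \pi(k_n+1) r^2$. Letting $r \to \sy(S)/2$ then gives $\sr(S) \leq \tfrac{4}{\pi(k_n+1)}$, and passing to the supremum over $S \in \mathcal{H}(K)$ yields the claim.

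The heart of the argument is thus the embeddability step: one must verify that the exponential map at $p_n$ is injective on the open ball of radius $\sy(S)/2$ in the tangent cone. If it were not, two distinct tangent vectors $v_1 \neq v_2$ of length less than $\sy(S)/2$ would satisfy $\exp_{p_n}(v_1) = \exp_{p_n}(v_2)=: q$, and concatenating the two geodesic segments from $p_n$ to $q$ would produce a closed curve $\gamma$ based at $p_n$ of total length strictly less than $\sy(S)$. Because the universal cover of a translation surface is $\cat(0)$ with unique geodesics between points, the vectors $v_1 \neq v_2$ force $\gamma$ to be non-contractible; its free homotopy class therefore contains a closed geodesic of length at most $|\gamma| < \sy(S)$. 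Standard surgery at any self-intersection of this closed geodesic then yields a simple closed geodesic of length less than $\sy(S)$, contradicting the definition of the systole.

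The main obstacle I anticipate lies precisely in this simplification step, namely that a shortest non-trivial closed geodesic on a translation surface can be replaced by a simple one of no larger length. On smooth nonpositively curved surfaces this is classical via bigon surgery, but here the geodesic may run through or self-intersect at cone points, where the surgery has to be carried out with care, using the fact that each cone angle $2\pi(k_i+1)$ is at least $4\pi$ so that local shortcuts around a cone point are always available.
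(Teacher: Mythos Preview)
Your approach is essentially identical to the paper's: embed a cone-disk of radius arbitrarily close to $\sy(S)/2$ around the cone point of maximal angle and compare its area $\pi(k_n+1)r^2$ to $\area(S)$. The paper packages the embeddability step as the statement $r_{inj}=\sy(S)/2$ (their injectivity-radius lemma), proved by exactly the $\cat(0)$ argument you sketch, and then deduces the bound as an immediate corollary.

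Your anticipated obstacle---careful bigon surgery at cone points to extract a \emph{simple} closed geodesic---is not actually needed. All you require is that every non-contractible closed curve on $S$ has length at least $\sy(S)$. This follows directly: a length-minimising non-contractible closed curve exists (the length spectrum is discrete), and if it had a self-intersection at a point $x$ (cone point or not) it would decompose as a concatenation of two strictly shorter loops based at $x$, at least one of which is non-contractible, contradicting minimality. Hence the shortest non-contractible closed curve is automatically simple and equals the systole, and your loop $\gamma$ of length $<\sy(S)$ already gives the contradiction without any further surgery.
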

This inequality seems to be useful for large $k_n$. In fact in the case of the stratum $\mathcal{H}(2g-2)$ it is close to the optimal inequality obtained from Equation \eqref{eq:sr_H2g-2} which follows from the results of  Judge/Parlier (cf. above) and Boissy/Geninska (cf. below).
Another important type of curves on translation surfaces are the saddle connections. A \textit{saddle connection} of a translation surface $S$ is a geodesic arc, whose endpoints are cone points, where we allow the case that both endpoints are the same cone point. As there is always a systole that runs through a cone point (see Proposition \ref{thm:sys_tr}), there is always a systole that is a saddle connection  in $\mathcal{H}(2g-2)$. While results about moderate length saddle connections and simple closed geodesics in any stratum were shown earlier in \cite{vo}, recently Boissy and Geninska showed in \cite{bg}, Theorem 3.3.:
\begin{thm}[Boissy, Geninska] Let $S$ be a translation surface in the stratum $\mathcal{H}(K)$, where $K=(k_1 ,\ldots, k_n)$, where $\sum_{i=1}^n k_i = 2g-2$. Then the shortest saddle connection $\delta$ in $S$ satisfies
\[
      \frac{\ell(\delta)^2}{\area(S)}  \leq \frac{2}{\sqrt{3}(2g-2+n)}
\]
The equality is obtained if and only if $S$ is built with equilateral triangles with sides saddle connections of length $\ell(\delta)$. Such surface exists in any connected component of any stratum. 
\label{thm:saddle_nohorse}
\end{thm}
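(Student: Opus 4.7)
The plan is to bound $\area(S)$ from below by decomposing $S$ into Voronoi cells around the cone points and applying a hexagonal packing bound locally at each cell. Set $\ell := \ell(\delta)$ and let $V_i = \{x \in S : d(x,p_i) \leq d(x,p_j) \text{ for all } j\}$, so that $\area(S) = \sum_{i=1}^n \area(V_i)$. Since every saddle connection has length at least $\ell$, every cone point other than $p_i$ lies at distance at least $\ell$ from $p_i$, and every geodesic loop based at $p_i$ has length at least $\ell$; consequently the boundary of $V_i$ sits at distance at least $\ell/2$ from $p_i$.

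The key step is to prove $\area(V_i) \geq (k_i+1) \cdot \tfrac{\sqrt{3}}{2}\ell^2$. Since $p_i$ has cone angle $2\pi(k_i+1)$, I would unroll $V_i$ at $p_i$ to a planar region $(k_i+1)$-sheet-covering the angular neighborhood of a single lifted basepoint; on this unrolled region, the boundary is still at distance at least $\ell/2$ from the center, and the lifts of the neighboring cone points lie at pairwise distance at least $\ell$. I would then invoke the pointwise Fejes T\'{o}th hexagonal Voronoi bound in the plane---a Voronoi cell in any planar configuration with pairwise distances at least $\ell$ has area at least $\tfrac{\sqrt{3}}{2}\ell^2$, with equality exactly for the regular hexagon of inradius $\ell/2$. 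Multiplying by the sheet count $k_i+1$ gives the claim. Summing over $i$ and using $\sum_{i=1}^n (k_i+1) = 2g-2+n$ (from $\sum k_i = 2g-2$) yields $\area(S) \geq \tfrac{\sqrt{3}}{2}(2g-2+n)\ell^2$, which rearranges to the stated inequality.

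For the equality case, saturation of the hex bound forces each unrolled $V_i$ to be a union of $(k_i+1)$ regular hexagons of inradius $\ell/2$ meeting at $p_i$, equivalently $S$ is tiled by equilateral triangles of side $\ell(\delta)$. To realize such a surface in every connected component of every stratum I would build it explicitly by gluing parallel sides of equilateral triangles of side $\ell$ so as to produce the prescribed cone orders $k_i$, generalizing the $\triangle_g$ construction of \cite{jp} in $\mathcal{H}(2g-2)$, and verify via standard invariants (spin, hyperellipticity) that all components are hit.

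The main obstacle is the pointwise planar hex bound and its transfer to the cone setting. Fejes T\'{o}th's inequality is cleanest as a density statement, so the pointwise form needs a separate argument exploiting the convexity of Voronoi cells together with the angular constraint that at most six neighbors can lie on the circle of radius $\ell$ around $p_i$. An alternative route---via a Delaunay triangulation by saddle connections, which has $F = 2(2g-2+n)$ triangles by Euler's formula and $3F = 2E$---would need each triangle to have area at least $\tfrac{\sqrt{3}}{4}\ell^2$; this fails for a general triangle with sides $\geq \ell$ (take a near-degenerate isoceles triangle with base close to $2\ell$), so the Delaunay empty-circumscribed-disk condition must be used essentially, and in the end encodes the same hexagonal-packing content.
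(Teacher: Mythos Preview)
The paper does not contain a proof of this theorem. It is stated there as an external result, attributed to Boissy and Geninska \cite{bg}, Theorem~3.3, and is only \emph{used} (for instance, to sharpen the bound on $\ell(\delta_1)$ in the proof of Theorem~\ref{thm:saddle2}). So there is no proof in the present paper to compare your proposal against.

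That said, your outline is essentially the approach of the original Boissy--Geninska argument: pass to a Delaunay/Voronoi decomposition based at the cone points, count cells via $\sum(k_i+1)=2g-2+n$, and invoke the hexagonal extremality. You have correctly isolated the delicate point: the naive per-triangle bound ``sides $\ge \ell$ implies area $\ge \tfrac{\sqrt{3}}{4}\ell^2$'' is false (your flat isoceles example), so one must use the Delaunay empty-disk condition or, dually, the pointwise Voronoi bound. In the original paper this is handled by unrolling the Delaunay cell at each singularity into a Euclidean cone and arguing there; your Voronoi version is the dual of the same idea. One thing to tighten in your write-up: when you unroll at $p_i$ and assert that ``the lifts of the neighboring cone points lie at pairwise distance at least $\ell$'', you should justify this carefully, since two such lifts may correspond to the same cone point $p_j$ reached by distinct geodesics from $p_i$, and the relevant distance is the one measured in the developed cone, not on $S$. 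The unrolling is an isometric immersion only on the star domain visible from $p_i$, and the pairwise-distance claim needs the observation that any geodesic between two such lifts either stays in the star (hence projects to a saddle connection or a concatenation of two, so has length $\ge \ell$) or leaves it through a cone point (same conclusion).
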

This result therefore extends the result in \cite{jp} about the stratum $\mathcal{H}(2g-2)$. As there is always a systole that passes through a saddle point this inequality also shows that $\sr^{tr}(g)$ is at least of order $\frac{1}{g}$. The result is, however slightly weaker than Corollary  \ref{cor:sr_cyclic}. In terms of inequality \eqref{eq:srh_up} and Corollary \ref{cor:sr_cyclic} the remaining question is if $\sr^{tr}(g)$ and $\sr_h^{tr}(g)$ are of order $\frac{\log(g)^2}{g}$ or of order $\frac{1}{g}$. Our intuition is that the moduli space of translation surfaces is 'large' enough to attain the upper bound that is also attained in the general case. Therefore we conjecture:

\begin{con} Let $ \sr^{tr}(g)$ and  $\sr_h^{tr}(g)$ be the supremal systolic ratio and homological systolic ratio in genus $g$, respectively. Then 
\[
     \sr^{tr}(g) \text{ \ and \ } \sr_h^{tr}(g) \text{ \ are of order \ } \frac{\log(g)^2}{g}.
\] 
\end{con}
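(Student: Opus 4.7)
The upper bound of the stated order is already in \eqref{eq:srh_up}; the substantive task is to produce a sequence of translation surfaces $(S_g)$ with $\sr(S_g) \gtrsim \log^2(g)/g$. The plan is to mimic the Buser--Sarnak construction from the hyperbolic setting, replacing arithmetic hyperbolic surfaces by arithmetic origamis.

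I would work with origamis (square-tiled surfaces), which are dense in every stratum and on which closed geodesics are combinatorially accessible: every closed geodesic is a concatenation of straight segments crossing squares, so the systole is controlled by the ``graph of squares'' of the origami and by its saddle connections. After normalizing each square to have unit area, an origami on $N$ squares has area $N$, and the goal becomes the construction of origamis whose genus is comparable to $N$ and whose systole has length of order $\log(N)$.

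The natural candidates are principal congruence covers. An origami on $N$ squares is encoded by a pair $(\sigma_h,\sigma_v) \in S_N \times S_N$, equivalently by a finite-index subgroup $H \leq F_2$. Taking $H$ to be the kernel of a surjection $F_2 \twoheadrightarrow \PSL(2,\ZZ/p\ZZ)$ (or an analogous arithmetic congruence subgroup), the index, and hence the number of squares, grows like $p^3$, while by the classical girth bound for Cayley graphs of congruence quotients the shortest non-trivial element whose monodromy in $S_N$ fixes a point has word length of order $\log(p)$. One would then verify that the resulting origami has genus $g \lesssim N \sim p^3$, and translate the combinatorial girth bound into the required geometric bound $\sy(S) \gtrsim \log(N) \sim \log(g)$, yielding $\sr(S) \gtrsim \log^2(g)/g$.

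The main obstacle is that short candidates for the systole on a translation surface come in two flavours: closed geodesics avoiding the cone points, and systoles passing through a cone point (\textbf{Proposition \ref{thm:sys_tr}}), the latter being controlled by saddle connections. The Cayley-graph girth bound only handles the first flavour. Ruling out short saddle connections amounts to showing that in the chosen arithmetic origamis the cone points remain well separated in the flat metric, which is a statement about the absence of short \emph{pointed} cycles in the permutation action on squares and should follow from a sufficiently strong expansion/girth estimate, but requires genuine work beyond the standard Buser--Sarnak setup. A secondary difficulty is that translation surfaces are flat rather than negatively curved, so none of the exponential volume-growth arguments that make the hyperbolic case clean are available; the entire quantitative control must come from the arithmetic of the chosen covers. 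A possible hedge is to first attempt the analogous lower bound for $\srh(g,tr)$, where homological non-triviality rules out the most elementary short loops and may allow a weaker separation estimate to suffice.
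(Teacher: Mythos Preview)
The statement you are attempting to prove is labelled as a \emph{conjecture} in the paper; the authors do not claim it as a theorem and do not provide any proof. There is therefore nothing in the paper to compare your argument against. What the paper does establish is only the framing: the upper bound of order $\log^2(g)/g$ from \eqref{eq:srh_up}, and a lower bound of order $1/g$ from \textbf{Corollary~\ref{cor:sr_cyclic}}. The authors explicitly say that the question of whether the true order is $\log^2(g)/g$ or $1/g$ is open, and state their intuition as a conjecture.

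Your proposal is a reasonable research \emph{plan}, not a proof, and you correctly identify the genuine gap yourself. The Buser--Sarnak analogy via congruence covers of origamis is the natural first idea, and the Cayley-graph girth does control the lengths of closed geodesics that lift to closed loops in the combinatorial graph of squares. But on a translation surface a systole can (and by \textbf{Proposition~\ref{thm:sys_tr}} always can be taken to) pass through a cone point, so it is governed by saddle connections, and the girth argument says nothing about those. Concretely, in a congruence cover of the standard torus origami the cone points are the preimages of the single vertex, and there is no a priori reason why two of them cannot be joined by a short straight segment even when the combinatorial girth is large. Until you can exclude short saddle connections --- which is a genuinely different problem from bounding the girth --- the argument does not close, and as far as is known this obstacle has not been overcome. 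Your hedge of passing to $\srh$ does not obviously help either: a short saddle connection through a cone point typically is homologically non-trivial.
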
 

Using similar methods as in the proof of Theorem \ref{thm:SR_stratum} we furthermore show that there always exist a certain number of short saddle connections depending on the degree and number of cone points in the surface. 

\begin{thm}[Short saddle connections] Let $S$ be a translation surface of genus $g$ in the stratum $\mathcal{H}(K)$ where $K=(k_1 ,\ldots, k_n)$, such that $k_1 \leq k_2 \leq \ldots \leq k_n$. Then there exist $\lfloor \frac{n}{2} \rfloor$ saddle connections $(\delta_l)_{l=1 ,\ldots, \lfloor \frac{n}{2} \rfloor}$, such that  
\[
  \frac{\ell(\delta_1)^2}{\area(S)} \leq \frac{4}{\pi(2g-2+n)}  \text{ \ \ and \ \ } \frac{\ell(\delta_l)^2}{\area(S)}  \leq \frac{4}{\pi(2g+n - 2l- \left( \sum_{i=0}^{2l-3}   k_{n-i}  \right) )}, \text{ \ for \ } l \geq 2.
\]      
\label{thm:saddle} 
\end{thm}  
The upper bound for the shortest saddle connection given by $\ell(\delta_1)$ is only slightly weaker than the one provided in Theorem \ref{thm:saddle_nohorse}.
In the largest stratum $\mathcal{H}(1,1,\ldots,1)$ this theorem implies: 

\begin{cor}[Short saddle connections in  $\mathcal{H}(1,1,\ldots,1)$ ] Let $S$ be a translation surface of genus $g$ in the stratum $\mathcal{H}(1,1,\ldots,1)$. Then there exist $ g-1 $ saddle connections, such that
\[
 \frac{\ell(\delta_l)^2}{\area(S)}  \leq \frac{1}{\pi(g -l)}  \text{ \ for \ } l \in \{1,2, \ldots,g-1\}. 
\] 
\label{thm:saddle_111}
\end{cor}  

The systoles of a translation surface $S$ are closely related to the systoles of its graph $\Gamma$ of saddle connections (cf. Section~\ref{section:gos}). We study the exact relations between them and obtain the following result:
\begin{thm}[Relation between systoles in the graph of saddle connections and of the surface] \label{n3thm}
  Let $c$ be a systole in the graph $\Gamma$ and let $\gamma$ be the corresponding closed curve on the surface given as union of saddle connections. If the combinatorial length of $c$ is not 3 then $\gamma$ is a systole of the surface. Furthermore, if the combinatorial length of $c$ is 3 and all angles of $c$ are greater or equal to $\pi$ (cf. Section~\ref{section:gos}) then $\gamma$ is a systole of the surface, too.
\end{thm}
It turns out that in the stratum $\mathcal{H}(1,1)$
the length of the systoles on $S$ actually equals the length of the systoles in $\Gamma$ (cf. Corollary~\ref{h11}), whereas we construct a translation surface in $\mathcal{H}(1,1,1,1)$ for which this is not the case 
(cf. Example~\ref{ex:Exception})
. We finally consider special translation surfaces called \textit{origamis}. They lie dense in each stratum and thus can be used to determine the maximal systolic ratio for a stratum. We  
present an algorithm to compute the systoles of the graph of saddle connections of a given origami surface (cf. Algorithm I and II). We use this in order to compute the maximal systolic ratio of all origamis with at most 67 squares in $\mathcal{H}(1,1)$ (cf. Section~\ref{section:algorithm}). These computations support Conjecture 1.2 of Judge and Parlier in \cite{jp}.\\
This article is structured in the following way. After introducing the necessary tools and definitions in Section 2 we present the results about short geodesics on translation surfaces. Then we provide an interesting property of maximal surfaces in Section 3. In Section 4, we introduce the graph of saddle connections and clarify how its systoles relate to the systoles of the translation surface. Finally, in Section 5 we present the algorithm to compute the length of a systole of the graph of saddle connections for an origami surface. 

\section*{Acknowledgments} We would like to thank Chris Judge for helpful discussions. We thank Pascal Schumann for 
pointing out some errors and typos to us. We would also like to thank the referees for their very helpful comments.
This work contributes to  Project-ID 286237555 - TRR 195 -- by the Deutsche Forschungsgemeinschaft (DFG, German Research Foundation).

\section{Short curves on translation surfaces}
As translation surfaces have singularities, we first give a proper definition of a geodesic. A curve is a map  $\gamma: I \rightarrow S, t \mapsto \gamma(t)$, from an open or closed interval $I \subset \R$ into a translation surface $S$. A \textit{geodesic} is a piece-wise differentiable curve, such that for all $x \in I \backslash \partial I$ there is a neighborhood $U_x$ of $x$, such that $\gamma \mid_{U_x}$ is an isometry.\\
By abuse of notation we denote the image $\gamma(I)$ equally by the letter $\gamma$. Denote by $\ell(\gamma)$ its length.\\
A \textit{geodesic arc} $\gamma_{p,q}$ in $S$ is a geodesic in $S$ with starting point $p$ and endpoint $q$. A \textit{geodesic loop} $\gamma_{p}$ in $S$ is a geodesic arc with a single starting and endpoint $p$.\\
As the cone angles in the cone points are always bigger or equal to $4\pi$, translation surfaces are local $\cat(0)$ spaces (see for example \cite{pa}, Theorem 3.15). It follows that the universal covering space $\tilde{S}$ of a translation surface $S$ is a global $\cat(0)$ space and $\tilde{S}$ is homeomorphic to $\R^2$. There exists a group $G$ of Deck transformations with the following properties:
\[
   S \simeq {\tilde{S} \mod G}, \text{ \ \ \ } G \subset \iso(S)  \text{ \ and \ } G \simeq \pi_1(S),
\]
where $\iso(S)$ denotes the group of isometries of $S$. Furthermore the projection
\[
 pr:\tilde{S} \rightarrow S
\] 
is a local isometry. Denote by $B_r(p) \subset S$ an open disk of radius $r$ around $p \in S$. Let  
\[
U_r(p)=\left\{q \in S \mid \dist(p,q) < r \right\}
\]
be the set of points with distance smaller than $r$ from $p$. We define:
\begin{defi} Let $S$ be a translation surface. The injectivity radius $r_p(S)$ of $S$ in $p$ is the supremum of all $r$, such that $U_r(p)$ is isometric to an open disk in $S$. We call the injectivity radius $r_{inj}$ of $S$ the infimum of all $r_p(S)$:
\[
    r_{inj}= \inf\{ r_p(S) \mid p \in S \}.
\]
\end{defi}
Now we prove the following theorem which is Theorem \ref{thm:SR_stratum} of the introduction:
\begin{thm}[Area estimate] Let $S$ be a translation surface in the stratum $\mathcal{H}(K)$, for $K=(k_1 ,\ldots, k_n)$, such that  
$k_1 \leq k_2 \leq \ldots\leq k_n$. Then
\[
    \sr(\mathcal{H}(K)) \leq \frac{4}{\pi \cdot (k_n+1)}.
\]      
\label{thm:SR_stratum2}
\end{thm}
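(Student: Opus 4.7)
The strategy is to embed a large cone disk around the cone point $p_n$ of maximal angle $2\pi(k_n+1)$ and compare its area to that of $S$. Set $L=\sy(S)$. The tangent cone at $p_n$ is a flat cone of total angle $2\pi(k_n+1)$, so a disk of radius $r$ in this cone has area $\pi(k_n+1)r^2$. The heart of the proof is to show that for every $r<L/2$ the metric ball $U_r(p_n)\subset S$ is isometric to such a cone disk, and therefore $\area(U_r(p_n))=\pi(k_n+1)r^2$.

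To establish this, I show that the exponential map $\exp_{p_n}$, defined on the tangent cone at $p_n$, restricts to an isometric embedding on the open disk of radius $r$. Suppose for contradiction that injectivity fails at some $r<L/2$. Then there are two distinct geodesic arcs $\gamma_1,\gamma_2$ emanating from $p_n$, each of length less than $r$, ending at a common point $q$. Lifting to the universal cover $\tilde S$, which is $\cat(0)$, the distinctness of $\gamma_1,\gamma_2$ forces their lifts to terminate at different points, since geodesics in a $\cat(0)$ space are determined by their endpoints. Hence $\gamma:=\gamma_1\bar\gamma_2$ is a piecewise-geodesic, non-contractible loop in $S$ of length $<2r<L$.

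Standard $\cat(0)$ arguments (existence of an axis for the corresponding non-trivial deck transformation) then produce a closed geodesic $\gamma^{*}$ freely homotopic to $\gamma$ with $\ell(\gamma^{*})\leq\ell(\gamma)<L$. I next claim that a shortest closed geodesic on a translation surface is simple. Indeed, a transverse self-intersection of $\gamma^{*}$ at some point $q'$ splits it into two loops $\alpha,\beta$ at $q'$, which cannot both be contractible (otherwise $\gamma^{*}$ is). Straightening the kink of the non-contractible one on a side of total angle $\geq\pi$ -- such a side exists, since the two angles of that loop at $q'$ sum to at least $2\pi$ -- produces a strictly shorter closed curve in the same free homotopy class, contradicting length-minimality. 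Consequently $\gamma^{*}$ is a simple closed geodesic of length $<L$, contradicting the definition of the systole; the embedding claim follows.

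With the cone disk embedded, $\pi(k_n+1)r^2=\area(U_r(p_n))\leq\area(S)$ for every $r<L/2$. Letting $r\nearrow L/2$ and rearranging gives $\sr(S)\leq 4/(\pi(k_n+1))$, and taking the supremum over $S\in\mathcal H(K)$ yields the theorem. The principal obstacle is the embedding step: converting a potential local obstruction -- a short geodesic loop based at $p_n$ -- into a global contradiction with the definition of the systole, by combining the existence of closed geodesic representatives in non-trivial free homotopy classes on $\cat(0)$ surfaces with the simplicity of shortest closed geodesics on translation surfaces.
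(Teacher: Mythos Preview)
Your approach is essentially the paper's: embed a cone disk of radius just under $\sy(S)/2$ at the cone point of maximal angle and compare its area $\pi(k_n+1)r^2$ to $\area(S)$. The paper packages the embedding step as an injectivity-radius lemma ($r_p(S)=\tfrac12\ell(\mu_p)$ and $r_{inj}=\tfrac12\sy(S)$), so that $r_{p_n}(S)\geq r_{inj}=\sy(S)/2$, and then concludes in one line; your passage through a closed-geodesic representative and its simplicity is exactly the content the paper hides in the phrase ``by passing to the infimum''.

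One slip in that passage: to shorten the sub-loop $\alpha$ at its corner you need a side of angle \emph{strictly less than} $\pi$, not $\geq\pi$ --- a side of angle $\geq\pi$ is precisely the condition for the piecewise path to be locally geodesic there, so nothing shortens. The cleaner fix is to drop the local straightening altogether: $\alpha$ is non-contractible and strictly shorter than $\gamma^*$, so minimizing length in its free homotopy class yields a strictly shorter closed geodesic, contradicting global minimality of $\gamma^*$; iterating (or invoking discreteness of the length spectrum) produces a simple one of length $<L$.
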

To this end we first show the following lemma:
\begin{lem}  Let $S$ be a translation surface. Then $r_p(S) = \frac{1}{2} \ell(\mu_p)$, where $\mu_p$ is a shortest homotopically non-trivial geodesic loop with starting and endpoint $p$. Furthermore 
\[
    r_{inj}= \frac{\sy(S)}{2}.
\]
\label{thm:inj_systole}
\end{lem}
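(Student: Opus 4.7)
The plan is to derive both equalities by lifting to the CAT(0) universal cover $\tilde{S}$, exploiting the correspondence between short geodesic loops at $p$ and non-trivial deck transformations moving a fixed lift $\tilde p$ of $p$.

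For $r_p(S) = \tfrac{1}{2}\ell(\mu_p)$ one direction is elementary: any non-trivial geodesic loop $\mu$ at $p$ of length below $2r$ is trapped inside $U_r(p)$, because each of its points lies within arclength $\tfrac{1}{2}\ell(\mu) < r$ of $p$. If $U_r(p)$ is isometric to an open disk, then it is simply connected, so any such loop would be null-homotopic in $S$, forcing $\ell(\mu_p) \geq 2 r_p(S)$. For the reverse direction I fix $r < \tfrac{1}{2}\ell(\mu_p)$ and show that $pr$ restricts to an isometry $B_r(\tilde p) \to U_r(p)$. The key step is injectivity: a coincidence $pr(\tilde q_1) = pr(\tilde q_2)$ with distinct lifts would produce $g \in G \setminus \{e\}$ with $\tilde q_2 = g\tilde q_1$, and combining the triangle inequality with $G \subset \iso(\tilde{S})$ yields $d(\tilde p, g\tilde p) \leq d(\tilde p, \tilde q_2) + d(g\tilde q_1, g\tilde p) = d(\tilde p, \tilde q_2) + d(\tilde p, \tilde q_1) < 2r$. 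The unique CAT(0)-geodesic between $\tilde p$ and $g\tilde p$ then projects to a non-trivial geodesic loop at $p$ of length strictly below $2r \leq \ell(\mu_p)$, contradicting the minimality of $\mu_p$. Because $pr$ is a local isometry and $B_r(\tilde p)$ is a geodesically convex CAT(0) ball, the bijection upgrades to an isometry, so $U_r(p)$ is indeed isometric to an open disk.

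For $r_{inj} = \tfrac{1}{2}\sy(S)$ it then suffices to prove $\inf_p \ell(\mu_p) = \sy(S)$. The inequality $\leq$ is immediate: any point on a systole $\sigma$ realizes a $\mu_p$ with $\ell(\mu_p) \leq \ell(\sigma) = \sy(S)$. For the reverse I combine two standard facts: every non-trivial $\mu_p$ is freely homotopic to a length-minimizing closed geodesic whose length is at most $\ell(\mu_p)$, and the globally shortest closed geodesic of $S$ is simple and thus realizes $\sy(S)$. Simplicity follows by a cut-and-paste argument: a self-intersection $q$ on a length-minimizing closed geodesic would split it into two strictly shorter loops based at $q$ whose concatenation represents its conjugacy class, and at least one of these sub-loops must be non-trivial, whose own free-homotopy minimizer is then even shorter, contradicting global minimality. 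The main delicacy I expect is precisely in this cut-and-paste step, as closed geodesics on translation surfaces may carry corners at cone points, but the length strictly decreases at any self-intersection regardless of such corners, so the argument carries through unchanged.
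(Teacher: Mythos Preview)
Your argument is correct and follows essentially the same route as the paper: both lift to the CAT(0) universal cover and exploit the correspondence between non-trivial geodesic loops at $p$ and non-trivial deck transformations, with uniqueness of geodesics (Cartan--Hadamard) supplying the key step; your version works at radii $r<\tfrac12\ell(\mu_p)$ and proves injectivity of $pr$, while the paper works at the critical radius $R=r_p(S)$ and locates two touching lifted disks, but these are contrapositive formulations of the same idea. For the second equality the paper simply writes ``by passing to the infimum'', whereas you actually supply the justification that $\inf_p \ell(\mu_p)=\sy(S)$, in particular the cut-and-paste argument showing that the globally shortest closed geodesic is simple.
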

\begin{proof}  We first prove that $r_p(S) = \frac{1}{2} \ell(\mu_p)$. As $\mu_p$ is a homotopically non-trivial geodesic loop, we have that 
\[
\ell(\mu_p) \geq 2r_p(S).
\]
Set $R=r_p(S)$. To prove the other direction, we lift $U_R(p) = B_R(p)$ to the universal covering space $\tilde{S}$. Let $\left(B_R(p_i)\right)_{i \in \pi_1(S)}$ be the lifts of $B_R(p)$. Then the closure of two such disks $\overline{B_R(p_m)}$ and $\overline{B_R(p_l)}$  may intersect, but can only intersect at the boundary. Let without loss of generality $\overline{B_R(p_1)}$ and $\overline{B_R(p_2)}$ two such disks and let $q$ be an intersection point of $\overline{B_R(p_1)}$ and $\overline{B_R(p_2)}$. Let $\gamma_{p_1,q}$ and $\gamma_{p_2,q}$ be the geodesic arcs in  $\overline{B_R(p_1)}$ and $\overline{B_R(p_2)}$, respectively, that connect the respective centers and $q$.\\
We now show that $p_1 \neq p_2$, from which follows by covering theory that $pr( \gamma_{p_1,q} \cup \gamma_{p_2,q})$ is a non-trivial loop in $S$.\\
Suppose $p_1=p_2$. Then $\gamma_{p_1,q}$ and $\gamma_{p_2,q}$ are different geodesic arcs connecting $p_1$ and $q$. But by the Cartan-Hadamard Theorem there can be only one geodesic arc connecting two different points in a $\cat(0)$ space. A contradiction. Hence $pr( \gamma_{p_1,q} \cup \gamma_{p_2,q})$ is a loop $\mu'_p$ with base point $p$ of length $2r_p(S)$. Hence the shortest geodesic loop $\mu_p$ with base point $p$ has length  smaller than or equal to $2r_p(S)$. In total we have:
\[
      2r_p(S) = \ell(\mu_p).
\] 
By passing to the infimum we obtain  $r_{inj} = \frac{\sy(S)}{2}$, which is the second part of the statement in Lemma \ref{thm:inj_systole}. This concludes our proof. 
\end{proof}
Theorem \ref{thm:SR_stratum2} is a direct consequence of the following corollary:
\begin{cor} Let $S$ be a translation surface with a cone point of cone angle $2\pi k$. Then
\[
      \sr(S) \leq \frac{4}{\pi \cdot k}.
\]      
\label{thm:SR_cone_angle}
\end{cor}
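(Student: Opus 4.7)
The plan is to reduce the corollary to an area comparison at the distinguished cone point, leveraging \textbf{Lemma \ref{thm:inj_systole}}. First I would let $p$ be a cone point of cone angle $2\pi k$ and set $R = r_p(S)$. From the lemma, $R \geq r_{inj} = \sy(S)/2$, so a ball of radius $\sy(S)/2$ around $p$ embeds inside the ball of radius $R$, which itself embeds isometrically in $S$.

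Next I would identify the open ball $U_R(p) \subset S$ with a standard Euclidean cone of cone angle $2\pi k$ and radius $R$. A circular sector of opening angle $\theta$ and radius $R$ has area $\theta R^2/2$, so here the area is $\pi k R^2$. Since the ball is embedded, this quantity is at most $\area(S)$. Combining with $R \geq \sy(S)/2$ gives
\[
      \frac{\pi k \, \sy(S)^2}{4} \leq \pi k R^2 \leq \area(S),
\]
which rearranges immediately to $\sr(S) \leq 4/(\pi k)$. \textbf{Theorem \ref{thm:SR_stratum}} is then a one-line corollary: apply this bound at the cone point $p_n$ with the largest cone angle $2\pi(k_n+1)$.

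The only delicate point is the identification of $U_R(p)$ with the full cone disk of angular extent $2\pi k$, i.e.\ verifying that no angular ``wrap-around'' occurs before radius $R$. This is really the content of \textbf{Lemma \ref{thm:inj_systole}}: if such a collision occurred at some radius $r < R$, one would produce, by the same Cartan--Hadamard argument used in the lemma, a non-trivial geodesic loop at $p$ of length $2r < 2R = \ell(\mu_p)$, contradicting the minimality of $\mu_p$. Since the lemma is already in hand, this identification is free, and the rest of the proof is the area inequality above.
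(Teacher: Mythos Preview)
Your proposal is correct and follows essentially the same argument as the paper: set $R=r_p(S)$, use \textbf{Lemma \ref{thm:inj_systole}} to get $\sy(S)/2 \leq R$, and compare the area $\pi k R^2$ of the embedded cone-disk with $\area(S)$. Your additional remark on why the ball at $p$ really has the full angular extent $2\pi k$ is a welcome clarification the paper leaves implicit.
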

\begin{proof}
Let $p$ be the cone point of cone angle $2\pi k$. Set $R=r_p(S)$. Then $U_R(p)=B_R(p)$ is an embedded disk of  radius $R$ in $S$. Hence
\[
R^2 \pi k  =  \area(B_R(p))  < \area(S) \text{ \ \  and \ \ } \frac{\sy(S)}{2}  =r_{inj} \leq r_p(S) = R.  
\]
Combining these two inequalities we obtain:
\[
      \sr(S) \leq \frac{4}{\pi \cdot k},
\]   
which proves Corollary \ref{thm:SR_cone_angle} and therefore Theorem \ref{thm:SR_stratum2}.
\end{proof}

We now prove that translation surfaces have short saddle connections by expanding embedded disks around cone points. The following theorem is Theorem \ref{thm:saddle} of the introduction: 
\begin{thm}[Short saddle connections]  Let $S$ be a translation surface of genus $g$ in the stratum $\mathcal{H}(K)$ where $K=(k_1 ,\ldots, k_n)$, such that $k_1 \leq k_2 \leq \ldots \leq k_n$. Then there exist $\lfloor \frac{n}{2} \rfloor$ saddle connections $(\delta_l)_{l=1 ,\ldots, \lfloor \frac{n}{2} \rfloor}$, such that 
\[
  \frac{\ell(\delta_1)^2}{\area(S)} \leq \frac{4}{\pi(2g-2+n)}  \text{ \ \ and \ \ } \frac{\ell(\delta_l)^2}{\area(S)}  \leq \frac{4}{\pi(2g+n - 2l- \left( \sum_{i=0}^{2l-3}   k_{n-i}  \right) )}, \text{ \ for \ } l \geq 2.
\]      
\label{thm:saddle2}
\end{thm}  
We also give a refined estimate in the stratum $\mathcal{H}(1,1,\ldots,1)$. 

\begin{cor}[Short saddle connections in  $\mathcal{H}(1,1,\ldots,1)$ ]Let $S$ be a translation surface of genus $g$ in the stratum $\mathcal{H}(1,1,\ldots,1)$. Then there exist $ g-1 $ saddle connections, such that 
\[
 \frac{\ell(\delta_1)^2}{\area(S)} \leq \frac{1}{\pi(g-1)} \text{ \ \ and \ \ }  \frac{\ell(\delta_l)^2}{\area(S)}  \leq \frac{1-\frac{\pi\ell(\delta_1)^2 \cdot l}{\area(S)}}{\pi(g -l)}  \text{ \ for \ } l \geq 2. 
\]    
\label{thm:saddle2_111}
\end{cor}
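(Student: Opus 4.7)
The plan is to establish the first inequality by a direct application of Theorem~\ref{thm:saddle_nohorse}, and the subsequent ones by an inductive disk-expansion argument similar to the proof of Theorem~\ref{thm:SR_stratum}. In the stratum $\mathcal{H}(1,\ldots,1)$, Equation~\eqref{eq:Euler} forces $n=2g-2$, so the Boissy-Geninska bound $\frac{2}{\sqrt{3}(2g-2+n)}$ specializes to $\frac{1}{2\sqrt{3}(g-1)}$, which is exactly the first claim.

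A key preliminary observation for the induction is that the open disk of radius $\ell(\delta_1)/2$ around any cone point is embedded, and two such disks around distinct cone points are disjoint. Indeed, a shortest homotopically non-trivial geodesic loop $\mu_p$ at a cone point is itself a saddle connection (by the paper's definition, which permits both endpoints to coincide), so $\ell(\mu_p) \geq \ell(\delta_1)$, and Lemma~\ref{thm:inj_systole} gives $r_p(S) \geq \ell(\delta_1)/2$. An overlap of two such disks would yield a geodesic of length less than $\ell(\delta_1)$ between the two centers and, being disjoint from the other cone points, a saddle connection shorter than $\delta_1$.

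For $l \geq 2$ I would argue by induction. Given $\delta_1,\ldots,\delta_{l-1}$ whose endpoints lie in a set of at most $2(l-1)$ used cone points, I would hold disks of radius $\ell(\delta_1)/2$ fixed around these and expand disks of common radius $r \geq \ell(\delta_1)/2$ around the (at least) $2g-2l$ remaining cone points. The first critical $r$ at which this family ceases to be embedded---either two expanding disks meet, an expanding disk meets a fixed one, or an expanding disk self-intersects---produces a saddle connection $\delta_l$ of length at most $2r$ having at least one new endpoint; in particular $\delta_l$ is distinct from $\delta_1,\ldots,\delta_{l-1}$. Since every cone angle equals $4\pi$, a disk of radius $\rho$ has area $2\pi\rho^2$, and disjointness of the whole family at the critical $r$ gives
\[
(l-1)\pi\,\ell(\delta_1)^2 + 4\pi(g-l)\,r^2 \leq \area(S).
\]
Combined with $\ell(\delta_l)^2 \leq 4r^2$, this yields $\frac{\ell(\delta_l)^2}{\area(S)} \leq \frac{1 - (l-1)\pi\,\ell(\delta_1)^2/\area(S)}{\pi(g-l)}$, which implies the stated bound because $(l-1)\pi \geq l$ for $l \geq 2$.

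The main obstacle is the bookkeeping in degenerate configurations where the preceding saddle connections share endpoints or include loops, so that strictly fewer than $2(l-1)$ cone points have been used. In such cases more disks are available for expansion, which only strengthens the area inequality; I would accordingly phrase the induction using the \emph{upper} bound $2(l-1)$ on the number of used cone points, so that a single argument covers all situations.
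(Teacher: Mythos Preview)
Your proposal is correct and follows essentially the same approach as the paper: both specialize the disk-expansion argument to $k_i=1$, $n=2g-2$, keep track of the area contributed by the already-used cone points (bounding their radii below by $r_1=\ell(\delta_1)/2$), and compare the total disk area to $\area(S)$. The bookkeeping differs only cosmetically---you reset the fixed disks to radius $\ell(\delta_1)/2$ at each step while the paper freezes them at their successive critical radii and then invokes $r_k\geq r_1$---and your intermediate bound with $(l-1)\pi$ in the numerator is in fact slightly sharper than the stated one.
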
 
Simplifying the second inequality for $\frac{\ell(\delta_l)^2}{\area(S)}$  we obtain Corollary \ref{thm:saddle_111} of the introduction.

\begin{proof} Let $S$ be a translation surface in the stratum $\mathcal{H}(K)$, where $K=k_1,\ldots,k_n$ and
\[
     k_1 \leq k_2 \leq \ldots \leq k_n.
\]   
We recall that $S$ has $n$ cone points with respective cone angles 
\[
2\pi \cdot (k_i+1) \text{ \ with \ }   \sum_{i=1}^n k_i = 2g-2
\]
where $g$ is the genus of $S$. Let $(p_i)_{i=1 ,\ldots, n}$ be the cone points of $S$ and for a fixed $i$ let $B_\epsilon(p_i)$ be an embedded disk of radius $\epsilon > 0$ around $p_i$. Note that we do not assume that $2\pi (k_i+1)$ is the cone angle at $p_i$ as this would not conform with our procedure.
The idea is to now expand the radii of these disks successively in $ \lfloor \frac{n}{2} \rfloor$ steps until they intersect. This will give us in each step a saddle connection together with an upper bound based on the area of the respective disks.  We start with the first step as follows:\\ 
\\
\textit{Step 1:} We expand the radii of the $n$ disks $(B_\epsilon(p_i))_i$ simultaneously until either the closure of two disks with radius $r_1$ intersect or the closure of a single disk with radius $r_1$ self-intersects. In the first case, we assume without loss of generality that the two disks $\overline{B_{r_1}(p_1)}$ and  $\overline{B_{r_1}(p_2)}$ intersect. In the second case, we assume that $\overline{B_{r_1}(p_1)}$ self-intersects. Connecting the respective saddle points by a geodesic arc, we obtain a saddle connection $\delta_1$ of length $\ell(\delta_1) \leq 2r_1$.   
We would like to mention that there is also a 'degenerate case', where we have more than one self-intersecting disk or more than two disks intersecting at radius $r_1$. In this case we arbitrarily choose a single such disk or a pair of disks and treat the other disks successively in the following steps. In this case, the next radius  $r_2$ in our sequence will be equal to $r_1$.
Let $2\pi(m_i+1)$ be the cone angle at $p_i$. Then for the union of all disks of radius $r_1$ we obtain:
\[
\area(\biguplus_{i=1}^n B_{r_1}(p_i)) = \sum_{i=1}^{n} \area(B_{r_1}(p_i)) = \sum_{i=1}^{n}  \pi  (m_i + 1) \cdot r_1^2 = \pi (2g-2 +n) \cdot r_1^2.
\]
As the union of the disks $\biguplus_{i=1}^n B_{r_1}(p_i)$ is embedded in $S$, we have furthermore
\[
\pi  (n+2g-2) \cdot r_1^2 = \area(\biguplus_{i=1}^n B_{r_1}(p_i)) \leq \area(S). 
\]
As $\ell(\delta_1) \leq 2r_1$ or $\frac{\ell(\delta_1)^2}{4} \leq r_1^2$ we obtain from the above inequality an upper bound for $\frac{\ell(\delta_1)^2}{\area(S)}$:
\[
\frac{\ell(\delta_1)^2}{\area(S)} \leq \frac{4}{\pi \cdot (2g-2 +n)}.
\]
In particular the last inequality implies an upper bound for the quotient of the square of the length of the shortest saddle connection and the area which is only slightly weaker than the bound of Boissy and Geninska in  Theorem \ref{thm:saddle_nohorse}. \\
\\
\textit{Step 2:} We note that we have at least $n-2$ remaining disks. We now expand the remaining disks until one of the following situations occurs: 
\begin{itemize}
\item[i)] the closure of a single disk among these disks self-intersects at radius $r_2$. Let without loss of generality $B_{r_2}(p_3)$ be that disk, or
\item[ii)] two different disks, both with radius $r_2$, or one with radius $r_1$ and the second with radius $r_2$ intersect. Here we assume that  $\overline{B_{r_2}(p_3)}$ and  $\overline{B_{r_2}(p_4)}$ intersect in the first case or $\overline{B_{r_2}(p_3)}$ and  $\overline{B_{r_1}(p_1)}$ intersect in the second case. 
\item[iii)] several instances of intersecting disks as described in the previous two cases occur. Again, in this 'degenerate case' we choose arbitrarily either a disk from \textit{Case i)} or two disks from \textit{Case ii)} for this step and treat the other disks successively in the next step or steps. In our result this would just imply that $r_3 = r_2$. 
\end{itemize}
In both \textit{Case i)} and \textit{Case ii)}, we obtain a second saddle connection $\delta_2$ by connecting the respective saddle point or saddle points with a geodesic arc $\delta_2 \neq \delta_1$. In the first case $\ell(\delta_2) \leq 2r_2$ and in the second case $\ell(\delta_2) \leq r_1 + r_2$ or $\ell(\delta_2) \leq 2r_2$. So in any case $\ell(\delta_2) \leq 2r_2$. As in \textit{Step 1}, we obtain an upper bound on $r_2$, as all disks are embedded. In any case we have
\begin{eqnarray}
     \sum_{i=3}^{n} \area(B_{r_2}(p_i))       \leq    \area(B_{r_1}(p_1)) + \area(B_{r_1}(p_2)) + \sum_{i=3}^{n} \area(B_{r_2}(p_i)) \leq \area(S).
\label{eq:disk_all}     
\end{eqnarray}  
We recall that $k_1 \leq k_2 \leq \ldots \leq k_n$ and that $2\pi(m_i+1)$ is the cone angle at $p_i$. By the formula for the area of a disk of radius $r_2$ we therefore obtain   
\[
   \pi  ( 2g -4 + n - k_n - k_{n-1} ) r^2_2 \leq \pi  ( 2g -4 + n - m_1 - m_2 ) r^2_2  \leq  \sum_{i=3}^{n} \area(B_{r_2}(p_i)) \leq \area(S). 
\]
As $\ell(\delta_2) \leq 2 r_2$ and therefore $\frac{\ell(\delta_2)^2}{4} \leq r_2^2$ this implies
\begin{eqnarray}
 \nonumber
\frac{\ell(\delta_2)^2}{4} &\leq& r_2^2 \leq \frac{ \area(S)}{\pi(2g -4 +n - k_n - k_{n-1} )}  \text{ \ hence \ } \\
\frac{\ell(\delta_2)^2}{\area(S)}  &\leq& \frac{ 4}{\pi(2g -4 +n - k_n - k_{n-1} )}. 
\end{eqnarray}
We proceed this way by expanding in each step $l$ the remaining disks further in each step picking either a single self-intersecting disk or two different disks that intersect. In each step we obtain a new saddle connection together with an upper bound of its length. In the $l$-th step we have at least $n-2(l-1)$ remaining disks:\\
\\
\textit{Step l:}  We obtain a saddle connection $\delta_l$ such that $\ell(\delta_l) \leq 2r_l$ by connecting the respective saddle point or saddle points with a geodesic arc $\delta_l$ length smaller or equal to $2r_l$. As in \textit{Step 2}, we obtain an upper bound on $ r_l$
\begin{eqnarray}
\nonumber
\frac{\ell(\delta_l)^2}{4} &\leq& r_l^2 \leq \frac{ \area(S)}{\pi(2g+n - 2l- \left( \sum_{i=0}^{2l-3}   k_{n-i}  \right) ) }  \text{ \ hence \ } \\
\frac{\ell(\delta_l)^2}{\area(S)}  &\leq& \frac{4}{\pi(2g+n - 2l- \left( \sum_{i=0}^{2l-3}   k_{n-i}  \right) )}.
\label{eq:delta_l}
\end{eqnarray}
In the case of the largest stratum  $\mathcal{H}(1,1,\ldots,1)$ we have that $k_i =1$ for all $i$ and therefore $n=2g-2$. In this case we can get a more refined inequality by taking into account the area of all expanded disks.  As $r_1 \leq r_k$ for all $k$ we obtain in \textit{Step l}
\begin{eqnarray}
\nonumber
 \sum_{i=1}^{2l} \area(B_{r_1}(p_i)) &+& \sum_{i=2l+1}^{n} \area(B_{r_l}(p_i))  \leq \area(S)
     \text{ \ or \ } \\    \nonumber 
   4\pi l \cdot r_1^2  &+& \pi (4g-4l) \cdot r_l^2  \leq \area(S)  \text{ \ hence \ } \\
\frac{\ell(\delta_l)^2}{\area(S)}  &\leq& \frac{1-\frac{\pi\ell(\delta_1)^2 \cdot l}{\area(S)}}{\pi(g -l)}.   
\label{eq:delta_l111}
\end{eqnarray}
The algorithm ends after  $\lfloor \frac{n}{2} \rfloor$ steps if in each step we obtain a saddle connection between two new saddle points. Hence after $l = \lfloor \frac{n}{2} \rfloor$ steps we obtain Theorem \ref{thm:saddle2} using Equation \eqref{eq:delta_l}. We furthermore obtain Corollary \ref{thm:saddle2_111} using Equation \eqref{eq:delta_l111}. 
\end{proof}

Lemma \ref{thm:inj_systole} implies that if the systole of a translation surface is large, then it is embedded in a large disk. In this case the systole is also embedded in a large tube, as we will see in the following.\\ 
Let $\eta \subset S$ be a simple closed geodesic in $S$. We define a neighborhood $U_w(\eta)$ around $\eta$ of width $w > 0$ by
\[
U_w(\eta)=\left\{p \in S \mid \dist(p,\eta) < w \right\}.
\]
For sufficiently small $w > 0$ the region $U_w(\eta)$ is a topological annulus. We increase $w$ until for some $w = \omega_{\eta}$ the closure $\overline{U_{\omega_{\eta}}(\eta)}$ of this region will start to \emph{self-intersect}, i.e. there exist two geodesic arcs $\delta'$ and $\delta''$ of length $\omega_{\eta}$ emanating from $\eta$ and having the endpoint $p$ in common. We call this value $\omega_{\eta}$ the \textit{maximal collar width} of $\eta$. As $U_w(\eta)$ is open, $\omega_{\eta}$ is the maximum of all $w$, such that $U_w(\eta)$ is a topological annulus in $S$. Finally, for $w < \omega_{\eta}$ we call 
\[
C_w(\eta)= U_w(\eta) = \left\{p \in S \mid \dist(p,\eta) < w  \right\}
\]
a \textit{collar} or \textit{cylinder} around $\eta$ of width $w$. We have:

\begin{lem}[Collar lemma for systoles] Let $\alpha$ be a systole of a translation surface $S$. Then for the the maximal collar width $\omega_\alpha$ of $\alpha$ we have
$$      \omega_{\alpha} \geq \frac{\ell(\alpha)}{4}. $$
\label{thm:cyl_sys}
\end{lem}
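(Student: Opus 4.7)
The plan is an argument by contradiction: I will assume $\omega_\alpha \leq \ell(\alpha)/4$ and then produce a simple closed geodesic strictly shorter than $\alpha$, contradicting that $\alpha$ is a systole.

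Set $w := \omega_\alpha$. Since $U_w(\alpha)$ is still isometric to an annulus while no larger tubular neighborhood is, there must exist a point $p \in S$ at distance exactly $w$ from $\alpha$ realized by two distinct shortest geodesic arcs $\gamma_1, \gamma_2$ from $p$ to $\alpha$, each of length $w$, meeting $\alpha$ at points $q_1, q_2$. Lifting to $\tilde S$, I fix a lift $\tilde p$ of $p$ together with lifts $\tilde\gamma_i$ of $\gamma_i$ starting at $\tilde p$; if $\tilde\alpha$ denotes the lift of $\alpha$ containing the endpoint of $\tilde\gamma_1$, the $\cat(0)$ uniqueness of nearest-point projection onto the convex geodesic line $\tilde\alpha$ forces the endpoint of $\tilde\gamma_2$ to lie on a \emph{different} lift $h\tilde\alpha$ of $\alpha$, where $h$ is a deck transformation outside $\Stab(\tilde\alpha) = \langle T \rangle$ and $T$ is the translation of $\tilde\alpha$ by $\ell(\alpha)$.

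Next I form the loop $\eta := \gamma_1 \cdot \alpha_s \cdot \gamma_2^{-1}$ based at $p$, where $\alpha_s$ is the shorter of the two arcs of $\alpha$ between $q_1$ and $q_2$, of length $s \leq \ell(\alpha)/2$, so that $\ell(\eta) = 2w + s \leq \ell(\alpha)$. Unwinding $\eta$ in $\tilde S$ shows its associated deck transformation has the form $T^k h^{-1}$, which lies outside $\Stab(\tilde\alpha)$, so $\eta$ is non-contractible. Since $\eta$ has right-angle corners at $q_1, q_2$ (where $\gamma_i$ meets $\alpha$ perpendicularly) and a genuine corner at $p$ (the two arcs $\gamma_i$ leave $p$ in distinct directions), its $\cat(0)$ geodesic representative $\eta^*$ is strictly shorter: $\ell(\eta^*) < \ell(\eta) \leq \ell(\alpha)$. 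In particular $\eta^*$ cannot be freely homotopic to a nontrivial power of $\alpha$, since $T^m$ has translation length $|m|\ell(\alpha) \geq \ell(\alpha)$.

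To close the argument, I distinguish two cases. If $\eta^*$ is simple, then it is a simple closed geodesic in a free homotopy class distinct from that of $\alpha$ and of length strictly less than $\ell(\alpha)$, contradicting the systole property of $\alpha$. If $\eta^*$ is non-simple, I perform the standard surgery at a self-intersection, splitting $\eta^*$ into two closed loops whose total length equals $\ell(\eta^*)$ and at least one of which is essential; iterating on an essential factor produces an essential simple closed curve of length at most $\ell(\eta^*) < \ell(\alpha)$, whose geodesic representative is a simple closed geodesic (on a $\cat(0)$ surface, the geodesic representative of a simple free homotopy class is simple), again contradicting that $\alpha$ realizes the systole. The main obstacle I expect is this surgery step --- verifying that the iteration terminates with an essential simple curve and that the resulting geodesic representative remains simple. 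A secondary subtlety is the case where $p$ or some $q_i$ happens to be a cone point, where perpendicularity can fail; there the cone angle being at least $2\pi$ still produces a non-geodesic corner in $\eta$ at some vertex, preserving the strict shortening $\ell(\eta^*) < \ell(\eta)$.
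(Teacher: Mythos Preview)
Your argument is essentially the paper's own proof: locate the self-intersection point $p$ on the boundary of the maximal collar, concatenate the two radial arcs from $p$ to $\alpha$ with the shorter subarc of $\alpha$ to obtain a nontrivial loop of length at most $\ell(\alpha)/2 + 2\omega_\alpha$, and pass to its geodesic representative for a strict shortening and hence a contradiction when $\omega_\alpha \leq \ell(\alpha)/4$. You supply more justification than the paper does---the $\cat(0)$ lifting argument for non-contractibility and the surgery step to extract a \emph{simple} closed geodesic---whereas the paper simply writes ``let $\beta$ be the simple closed geodesic in the free homotopy class of $\alpha'\cdot\delta$'' and moves on.
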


The lemma uses similar arguments as the proof for the disks. A version for hyperbolic Riemann surfaces, which uses the same arguments can be found in \cite{am1}. For the sake of completeness we repeat the proof here.
\begin{proof} Let $\alpha$ be a systole of a translation surface $S$ of genus $g \geq 2$. The closure $\overline{U_{\omega_{\alpha}}(\alpha)}$ of the annulus of width $\omega_\alpha$ self-intersects in a point $p$. This means that there exist two geodesic arcs $\delta'$ and $\delta''$ of length $\omega_{\alpha}$ emanating from $\alpha$ and having the endpoint $p$ in common. These two arcs meet $\alpha$ at an angle $\theta \geq \frac{\pi}{2}$ and form a geodesic arc $\delta$. 
  This arc $\delta$ can have a single endpoint on $\alpha$ or two different ones. In the first case $\delta$ is a simple closed curve which must have at least the length of a systole. Therefore
\[
       \ell(\delta) = 2 \omega_\alpha \geq \ell(\alpha).         
\] 
This implies that $\omega_\alpha \geq \frac{\ell(\alpha)}{2}$, especially $\omega_\alpha \geq \frac{\ell(\alpha)}{4}$ and our statement is true.
\\ 
In the second case $\delta$ has two different endpoints on $\alpha$. In this case the endpoints of $\delta$ on $\alpha$ divide $\alpha$ into two parts. We denote these two arcs on $\alpha$ by $\alpha'$ and $\alpha''$. Let without loss of generality $\alpha'$ be the shorter arc of these two. So we have that
\[
   \ell( \alpha') \leq \frac{\ell(\alpha)}{2}.
\]
We note that $\delta$ is not freely homotopic with fixed endpoints to $\alpha'$ or $\alpha''$ as the universal covering of $S$ is a global $\cat(0)$ space. 
 Let $\beta$ be the simple closed geodesic in the free homotopy class of $\alpha'\cdot \delta$, where $\cdot$ denotes the concatenation of the two paths. As $\alpha$ is a systole of $S$, we have that
\begin{equation}
 \ell(\alpha) \leq   \ell(\beta) \leq \ell(\alpha') + \ell(\delta) \leq \frac{\ell(\alpha)}{2} + 2 \omega_{\alpha}, \text{ \ \ hence \ } \frac{\ell(\alpha)}{4} \leq  \omega_{\alpha}.
\label{eq:omega_alpha}   
\end{equation}
This proves our statement in the second case and concludes the proof.   
\end{proof}
Note that equality in Equation \eqref{eq:omega_alpha} can indeed occur if the endpoints of $\delta$ on $\alpha$ divide $\alpha$ into two parts of equal length and are both cone points of the surface $S$. Each systole $\alpha$ in a translation surface $S$ has an embedded collar $C_{w}(\alpha)$ of width $w \geq \frac{\ell(\alpha)}{4}$. From this fact we also obtain an upper bound for the length of a systole via an area argument. However, this estimate is not better than the one given in the introduction in inequality \eqref{eq:srh_up}. Next we prove that translation surfaces have the following property: 
\begin{prop}\label{prop:ConePoint} Let $S$ be a translation surface, then there exists a systole of $S$ that passes through a cone point.
\label{thm:sys_tr}
\end{prop}

\begin{proof}[Proof] Let $S$ be a translation surface and let $\gamma$ be a simple closed geodesic in $S$ that does not intersect a cone point. Let $\epsilon > 0$ be a sufficiently small positive real number such that 
\[
C_\epsilon(\gamma)=\left\{p \in M \mid \dist(p,\gamma) < \epsilon \right\}
\]
is a flat cylinder around $\gamma$ that does not contain any cone points. Expand the cylinder until at width $w$ $\overline{C_w(\gamma)}$ intersects a cone point $p$ at its boundary 
\[
\partial \overline{C_w(\gamma)} = \partial_1 \overline{C_w(\gamma)} \cup \partial_2 \overline{C_w(\gamma)},
\]
consisting of the two connected components $\partial_1 \overline{C_w(\gamma)}$ and $\partial_2 \overline{C_w(\gamma)}$. Let without loss of generality $\gamma' =  \partial_1 \overline{C_w(\gamma)}$ be the  boundary part containing the cone point $p$. Now $p$  might divide $\gamma'$ into two or more simple closed geodesics, or not. In the first case, let $\gamma''$ be such a geodesic, that is contained in $\gamma'$ and that contains $p$; in the second case, set $\gamma''=\gamma'$. Now 
\[
    \ell(\gamma)=  \ell(\gamma') \geq \ell(\gamma'').
\]
Hence, for each simple closed geodesic $\gamma$ there exists a simple closed geodesic $\gamma''$ of equal or smaller length than $\gamma$ that passes through a cone point (see also \cite{ma}, Lemma 4.1.2). Hence the minimum $\sy(S)$ is attained in at least one simple closed geodesic that passes through a cone point, from which follows Proposition \ref{thm:sys_tr}. 
\end{proof}

Finally we prove Theorem \ref{thm:sr_cyclic} of the introduction:

\begin{thm}[Intersystolic inequalities] \label{Ii} Let $ \sr^{tr}(g)$ and  $\sr_h^{tr}(g)$ be the supremal systolic ratio and homological systolic ratio in genus $g$. Then 
\[
                \frac{\sr^{tr}(g)}{k}  \leq   \sr^{tr}(k(g-1)+1)   \text{ \ and \ }   \frac{\sr_h^{tr}(g)}{k}  \leq   \sr_h^{tr}(k(g-1)+1).
\]
\label{thm:sr_cyclic2}
\end{thm}
\begin{proof} Let $S$ be a surface of genus $g$. Recall that
  every translation surface contains infinitely many regular closed geodesics, i.e.
  closed geodesics which do not contain a cone point. This
  was shown for $g \geq 2$ in \cite{Masur}, Theorem 2 and can be directly seen for 
  flat tori. 
  Regular closed geodesics are non-separating, since  by the Poincar{\'e} recurrence theorem every  trajectory leaving the geodesic in a fixed transverse direction $v$ returns to the geodesic
  or hits a singularity. There are only finitely many trajectories which hit
  a singularity before coming back to the geodesic. Every returning trajectory
  connects the two sides of the geodesics in its complement. \\
  We now cut $S$ along a regular closed geodesic to obtain a connected surface $S^c$ with two boundary geodesics $\alpha_1$ and $\alpha_2$. 
We then construct a cyclic cover $\tilde{S}$ of $S$ by pasting $k$ copies $(S^c_i)_{i=1,\ldots,k}$ of $S^c$ with boundary curves $\alpha^i_1$ and $\alpha^i_2$ together.  
To this end we identify the boundaries of the different $\left(S_i^c\right)_{i=1,..,k}$ in the following way
\begin{equation}
    \alpha^{k}_1 \sim \alpha^1_2  { \ \ and \ \ } \alpha^{i}_1 \sim \alpha^{i+1}_2  \text{ \ for \ } i=1,...,k-1
\label{eq:paste1}
\end{equation}
to obtain a cyclic cover. We denote the surface of genus $k(g-1)+1$ obtained according to this pasting scheme as
\[
    \tilde{S}= S_1^c + S_2^c + ... + S_k^c~\mod (\ref{eq:paste1}).
\]
As the covering is cyclic we have for the systole and homological systole of $S$:
\[
      \sy(S) \leq \sy(\tilde{S})   \text{ \ and \ }   \syh(S) \leq \syh(\tilde{S}).
\]
Theorem \ref{thm:sr_cyclic2} then follows by taking a maximal surface in the case of $\sr^{tr}(g)$ or a sequence of surfaces $(S_n)_n$ whose systole length converges to $\sr_h^{tr}(g)$ in the case of $\sr_h^{tr}(g)$.
\end{proof}

We note that in our construction we do not cut $S$ through a cone point. Therefore we obtain a covering surface $\tilde{S}$ with a controlled number of cone points. This means if $S$ is in the stratum $\mathcal{H}(K)$, where $K=(k_1 ,\ldots, k_n)$ and $\tilde{S}$ is a cyclic cover of order $l$ then
\[
    \tilde{S} \in \mathcal{H}(K^l) , \text{ \ where \ }  K^l=\underbrace{(K,K,\ldots,K)}_{l \,\,times}.
\]
Hence we also obtain:
\begin{cor} Let $\sr(\mathcal{H}(K))$ and $\srh(\mathcal{H}(K))$ and supremal systolic ratio and homological systolic ratio in the stratum $\mathcal{H}(K)$, where $K=(k_1 ,\ldots, k_n)$ and let $K^l=\underbrace{(K,K,\ldots,K)}_{l \,\,times}$.   Then 
\[
        \frac{\sr(\mathcal{H}(K))}{l}  \leq  \sr(\mathcal{H}(K^l))   \text{ \ and \ }    \frac{\srh(\mathcal{H}(K))}{l}  \leq  \srh(\mathcal{H}(K^l)).
\]
\label{thm:sr_cyclic_strata}
\end{cor}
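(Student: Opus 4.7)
The plan is to carry out the same cyclic pasting construction used in the proof of Theorem \ref{thm:sr_cyclic2}, but to pay attention to the stratum. Given $S \in \mathcal{H}(K)$, I would first invoke the existence of a regular (i.e.\ disjoint from every cone point) non-separating closed geodesic $\gamma$ on $S$; this is precisely what was established in the proof of Theorem \ref{thm:sr_cyclic2} using Masur's theorem and the Poincar\'e recurrence argument. Cutting $S$ along such a $\gamma$ produces the flat surface with boundary $S^c$ whose cone points coincide with those of $S$ and retain the original cone angles, since $\gamma$ crosses no singularity.

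Next I would form $\tilde{S}=S_1^c+\cdots+S_l^c\bmod(\ref{eq:paste1})$ by pasting $l$ copies of $S^c$ cyclically. The key observation is that gluing along a regular geodesic introduces no new cone points: at every seam point two flat half-collars are identified by a translation, so the total angle around the seam is $2\pi$. Consequently the cone points of $\tilde{S}$ are exactly the $l$-fold disjoint union of the cone points of $S$, with their original cone angles preserved. This yields $\tilde{S} \in \mathcal{H}(K^l)$, and a standard Euler characteristic count confirms that $\tilde{S}$ has genus $l(g-1)+1$ while $\area(\tilde{S})=l\cdot \area(S)$.

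The third step is the ratio inequality. Because $\tilde{S}\to S$ is an $l$-fold covering, the length of a (homological) systole cannot decrease in $\tilde{S}$, exactly as argued in Theorem \ref{thm:sr_cyclic2}, so
\[
\sr(\tilde{S}) \;=\; \frac{\sy(\tilde{S})^2}{\area(\tilde{S})} \;\geq\; \frac{\sy(S)^2}{l\cdot\area(S)} \;=\; \frac{\sr(S)}{l},
\]
and analogously for $\srh$. Applying this construction to a sequence $(S_n)_n\subset \mathcal{H}(K)$ with $\sr(S_n)\to \sr(\mathcal{H}(K))$ (respectively $\srh(S_n)\to \srh(\mathcal{H}(K))$) produces a sequence $(\tilde S_n)_n\subset \mathcal{H}(K^l)$ that witnesses the two inequalities in the limit. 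The only point going beyond Theorem \ref{thm:sr_cyclic2} is the stratum bookkeeping in step two, which hinges entirely on choosing the cut curve to avoid cone points; since that choice was already part of the earlier proof, no genuine obstacle arises.
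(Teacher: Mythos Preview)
Your proposal is correct and follows essentially the same approach as the paper: the corollary is deduced directly from the cyclic-cover construction of Theorem~\ref{thm:sr_cyclic2}, with the only additional observation being that cutting along a regular closed geodesic (which avoids all cone points) ensures the $l$-fold cover lies in $\mathcal{H}(K^l)$. Your use of approximating sequences rather than a maximal surface is appropriate here, since the existence of a maximal surface in a given stratum is not known in general.
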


\section{A property of maximal surfaces}
In this section we prove that in a maximal surface $S_{max}$ of a given stratum every simple closed geodesic that does not pass through a cone point is intersected by a systole. This will prove the following theorem which is Theorem \ref{thm:char_Smax} of the introduction:
\begin{thm} Suppose that $S_{max}$ is  a maximal translation surface in the stratum $\mathcal{H}(K)$, for $K=(k_1 ,\ldots, k_n)$. Then every simple closed geodesic, that does not run through a cone point is intersected by a systole of $S_{max}$. 
\label{thm:char_Smax2}
\end{thm}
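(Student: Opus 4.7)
The plan is to argue by contradiction. Suppose $S_{max}$ is a maximal translation surface in $\mathcal{H}(K)$, and suppose there exists a simple closed geodesic $\gamma\subset S_{max}$ that avoids every cone point and is disjoint from every systole. I will construct a one-parameter family of translation surfaces $(S_t)_{t\in[0,w)}$ in the same stratum with strictly decreasing area but the same systole length, producing some $S_t$ with $\sr(S_t)>\sr(S_{max})$ and contradicting maximality.

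\textbf{Construction of $S_t$.} Since $\gamma$ avoids the cone points, it admits a maximal embedded flat collar $C=C_{\omega_\gamma}(\gamma)$ of width $w:=\omega_\gamma>0$, isometric to the flat cylinder $[0,\ell(\gamma)]\times[-w,w]/\sim$ with $(0,y)\sim(\ell(\gamma),y)$; its boundary circles $\gamma_\pm$ contain cone points of $S_{max}$ (unless $g=1$, in which case $C$ already exhausts $S_{max}$ and the same argument is carried out on the whole surface). For each $t\in[0,w)$, replace $C$ by the shorter cylinder $C_t:=[0,\ell(\gamma)]\times[-(w-t),w-t]/\sim$, glueing its two boundary circles to $\partial(S_{max}\setminus C^\circ)$ by the obvious isometries. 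The resulting surface $S_t$ is a translation surface in $\mathcal{H}(K)$ (no cone point is created, destroyed, or altered) and satisfies $\area(S_t)=\area(S_{max})-2\ell(\gamma)\cdot t$.

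\textbf{Systole control.} Set $L_0:=\sy(S_{max})$ and $L_1:=\inf\{\ell(\delta):\delta\text{ a simple closed geodesic crossing }\gamma\}$; by hypothesis $L_0<L_1$. A straight-line argument inside the flat cylinder $C$ shows that every geodesic arc entering $C^\circ$ transversely through $\partial C$ must cross $\gamma$. Hence every simple closed geodesic of $S_{max}$ not crossing $\gamma$ either lies entirely in $S_{max}\setminus C^\circ$ or is a horizontal circle in $C$ of length $\ell(\gamma)$. The same dichotomy holds in $S_t$, and both types have their length preserved under the deformation (the piece $S_{max}\setminus C^\circ=S_t\setminus C_t^\circ$ is isometric, and horizontal circles in $C_t$ still have length $\ell(\gamma)$); consequently the shortest non-crossing simple closed geodesic of $S_t$ has length $L_0$. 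For crossing classes, consider the vertical-scaling map $\pi_t:S_{max}\to S_t$, defined as the identity on $S_{max}\setminus C^\circ$ and as the vertical scaling $(x,y)\mapsto (x,\tfrac{w-t}{w}y)$ on $C$. It is $1$-Lipschitz, and its inverse is $\tfrac{w}{w-t}$-Lipschitz, which yields the uniform bound $\ell_{S_t}([\delta])\geq \tfrac{w-t}{w}\,\ell_{S_{max}}([\delta])\geq \tfrac{w-t}{w}L_1$ for every crossing homotopy class $[\delta]$. Hence for $t<w(1-L_0/L_1)$ every crossing geodesic in $S_t$ has length strictly greater than $L_0$, so $\sy(S_t)=L_0$.

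Combining the two parts gives $\sr(S_t)=L_0^2/\area(S_t)>L_0^2/\area(S_{max})=\sr(S_{max})$ for such $t$, contradicting the maximality of $S_{max}$. The main obstacle is the uniform length control for crossing geodesics: one must rule out that some new short geodesic crossing the shrunken core $\gamma_t$ appears in $S_t$ with length below $L_0$. This is handled by the bi-Lipschitz bounds for $\pi_t^{\pm 1}$, which force every crossing homotopy class to have length in $S_t$ within a factor $1+O(t)$ of its length in $S_{max}$, uniformly over all such classes.
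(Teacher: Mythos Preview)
Your argument follows the same strategy as the paper's proof: assume a regular simple closed geodesic $\gamma$ meets no systole, shrink the flat cylinder around $\gamma$ to decrease the area while keeping the systole length fixed, and derive a contradiction to maximality. Your bi-Lipschitz control via the vertical-scaling map $\pi_t$ is a clean, more quantitative version of the paper's ``choose $\epsilon'$ small enough'' step.

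The one point that needs attention is the assertion ``by hypothesis $L_0 < L_1$''. The hypothesis (no systole crosses $\gamma$) only tells you that every individual crossing geodesic has length strictly greater than $L_0$, which yields $L_1 \geq L_0$; it does not by itself give strict inequality of the \emph{infimum}. If $L_1 = L_0$ your interval $[0,\,w(1-L_0/L_1))$ is empty and the argument collapses. The paper closes this gap by invoking the discreteness of the length spectrum of a translation surface, so that any non-systolic closed geodesic has length at least $L_0 + \delta$ for some fixed $\delta > 0$; you should insert this fact (it is standard) before using $L_1 > L_0$.
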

Observe that we explicitly use the existence of a maximal surface in this proof. \cite{jp} explicitly construct maximal
surfaces in $\mathcal{H}(2g-2)$ and conjecture a maximal surface in $\mathcal{H}(1,1)$.
However, it is to our knowledge not known in general, whether each stratum contains a maximal surface. Whereas it is true, that
the full moduli space $\mathcal{M}_g^{tr}$ does contain a surface with a systole of maximal length for the following reason.
For fixed genus $g$ consider a sequence $(S_n)_{n \geq 1}$ of surfaces whose systolic ratio converges to the supremum, i.e. 
\[
       \lim_{n \to \infty} \sr(S_n) = \sr^{tr}(g).
\]
As $\sr^{tr}(g) \geq \frac{0.58}{g-1}$ we know that this sequence does not converge to the boundary of the moduli space $\mathcal{M}_g^{tr}$. Hence the maximum is attained, as 
\[
            M_g = \{ S \in \mathcal{M}_g^{tr} \mid \sr(S) \geq \frac{0.58}{g-1} \} 
\]
is compact. Note that for the homological systole we do not know if the systole length converges to zero if $\syh(\cdot)$ of a sequence of surfaces converges to $\sr_h^{tr}(g)$. Therefore the same argument might not apply in this case. 

\begin{proof}[Proof of Theorem \ref{thm:char_Smax2}]
  Let $S_{max}$ be a maximal translation surface in the stratum $\mathcal{H}(k_1,\ldots,k_n)$. Let $\gamma$ be a simple closed geodesic that does not pass through a cone point. Now, for some $\epsilon > 0$, $\gamma$ is embedded in a flat cylinder $C_\epsilon(\gamma)$ that does not contain a cone point. Assume that no systole intersects $\gamma$. As every geodesic, that intersects $C_\epsilon(\gamma)$ intersects $\gamma$, we have that no systole intersects $C_\epsilon(\gamma)$.
Now, the length spectrum of a translation surface is discrete. Hence for any simple closed geodesic $\eta$ that is not a systole, we have:
\[
    \ell(\eta) > \sy(S_{max}) + \delta, \mbox{ for some } \delta > 0 \mbox{ independent of $\eta$.}
\]
    
We can construct a new translation surface $S'_{max}$ from $S_{max}$ by replacing the cylinder $C_\epsilon(\gamma)$ by a smaller cylinder $C_{\epsilon'}(\gamma)$ of width $\epsilon > \epsilon' > 0$.
We show below that we can choose $\epsilon'$ in a way such that no simple closed geodesic $\eta'$ not homotopic to $\gamma$ and intersecting $C_{\epsilon'}(\gamma)$ in $S'_{max}$ is shorter than $\sy(S_{max})$. This way we can construct a comparison surface $S'_{max}$, such that 
\[
         \sy(S'_{max}) = \sy(S_{max}) \text{ \ but \ }  \area(S'_{max}) < \area(S_{max}), \text{ \ hence \ }  \sr(S'_{max}) > \sr(S_{max}).
\]
But this is a contradiction to the fact that $S_{max}$ is maximal. Hence our assumption that $\gamma$ is not intersected by a systole is wrong. Therefore any simple closed geodesic that does not contain a cone point is intersected by a systole.\\
It remains to show the existence of an $\epsilon'$ as stated above. We denote in the following $ s_{\fs max} = \sy(S_{max})$. We will define $\epsilon'$ as $\epsilon' = r\cdot \epsilon$ with $0 < r < 1$ large enough. Let  $\eta'$ be a geodesic closed curve on $S'_{max}$ not homotopic to $\gamma$ which intersects $\gamma$  with intersection number $N$. For the moment we only suppose for $r$ that  $r > \frac{1}{2}$.\\
If $N$ is large enough, more precisely if $N > \frac{2\smax}{\varepsilon}$, then we have that \[\ell(\eta') > N \cdot \epsilon' > N\cdot\frac{\epsilon}{2} > \smax \hspace*{10mm}\mbox{(cf. Figure~\ref{fig:cuts1})}.\]
Hence $\eta'$ has the desired property.\\ 
\begin{figure}[h!]
    \centering
        \begin{tikzpicture}[scale=1,>=stealth',shorten >=1pt,auto,node distance=3cm,thick]
          \tikzstyle{place}=[circle,thick,draw=black!,minimum size=6mm]
          \tikzstyle{bullet}=[circle,thick,fill=black!,minimum size=2mm]
          \draw (-5,1) -- (5,1);
          \draw (-5,0) -- (5,0);
          \draw (-5,-1) -- (5,-1);
          \draw (-4.5,-1.5) -- (-3.5,1.5);  \draw (-1.5,-1.5) -- (-.5,1.5);  \draw (3,-1.5) -- (4,1.5);
          \draw (3.82,-1) -- (3.82,1.02);
          \node (x1) at (-4,1.3) {$\eta'$};
          \node (x5) at (-1,1.3) {$\eta'$};
          \node (x6) at (4.3,1.3) {$\eta'$};
          \node (x2) at (5.2,0) {$\gamma$};
          \node (x3) at (6.5,0) {$C_{\epsilon'}(\gamma)$};
          \node (x4) at (4.1,.4) {$\varepsilon'$};
          \draw[thick,decorate,decoration={brace,amplitude=8pt,raise=5pt}] (3,-1) -- node [above, left = 10pt] {$> \varepsilon'$} (3.8,.6);
         \end{tikzpicture} 
        \caption{The curve $\gamma$ is intersected $N$ times by a geodesic $\eta'$ with large $N$}
        \label{fig:cuts1}
\end{figure}

Let us now consider the case that $N \leq \frac{2\smax}{\varepsilon}$.
For this we will have to use a bigger lower bound for $r$. The curve $\eta'$ can be decomposed as \[\eta' = \eta'_0 \cup \eta'_1 \cup \ldots \cup \eta'_N,\] where $ \eta'_1$, \ldots, $\eta'_N$ are the connected components of $\eta' \cap C_{\epsilon'}(\gamma)$ and $\eta'_0$ is ``the rest'', i.e. $\eta'_0$ is the relative closure  of $\eta' \cap (S'_{max}\backslash  C_{\epsilon'})$ (cf. Figure~\ref{fig:cuts2}). In particular we have:
\[\ell(\eta') = \ell(\eta'_0) +  \ell(\eta'_1) + \ldots + \ell(\eta'_n)\]
Furthermore, each $\eta'_i$ is a geodesic segment with a fixed direction. It is the hypotenuse in an Euclidean right triangle with a vertical cathetus of length $\epsilon'$ and a horizontal cathetus whose length we call $d_i$. In particular, we have that $\ell(\eta'_i)^2 = \epsilon'^2 + d_i^2$  for  $i \in \{1,\ldots, n\}$ (cf. Figure~\ref{fig:cuts3}).
\begin{figure}[h!]
  \centering
        \begin{tikzpicture}[scale=1,>=stealth',shorten >=1pt,auto,node distance=3cm,thick]
          \tikzstyle{place}=[circle,thick,draw=black!,minimum size=6mm]
          \tikzstyle{bullet}=[circle,thick,fill=black!,minimum size=2mm]
          \draw (-5,1) -- (5,1);
          \draw (-5,0) -- (5,0);
          \draw (-5,-1) -- (5,-1);
          \draw (-5,-2) -- (-3,2);  \draw (-2,-2) -- (0,2);  \draw (3,-2) -- (5,2);
          \node (x1) at (-3.6,1.6) {$\eta_0'$};
          \node (x5) at (-.6,1.6) {$\eta_0'$};
          \node (x6) at (4.4,1.6) {$\eta_0'$};
          \node (x7) at (-5.17,-1.6) {$\eta_0'$};
          \node (x8) at (-2.15,-1.6) {$\eta_0'$};
          \node (x9) at (2.9,-1.6) {$\eta_0'$};
          \node (x8) at (-4.5,-.35) {$\eta_1'$};
          \node (x9) at (-1.5,-.35) {$\eta_2'$};
          \node (x10) at (3.5,-.35) {$\eta_3'$};              
          \node (x2) at (5.2,0) {$\gamma$};
          \node (x3) at (6.5,0) {$C_{\epsilon'}(\gamma)$};
         \end{tikzpicture} 
        \caption{Decomposition $\eta' = \eta_0' \cup \eta_1' \cup \ldots \cup \eta_N'$}
        \label{fig:cuts2}
      \end{figure}

Now we consider the corresponding non-geodesic curve $\hat{\eta}$ on  $S_{max}$ obtained from $\eta'$ as follows: Outside the cylinder  $C_\epsilon(\gamma)$, respectively the cylinder  $C_\epsilon'(\gamma)$, they coincide. From the construction of the surface we have that the boundary of the cylinder $C_\epsilon(\gamma)$ on $S_{max}$ can be identified with the boundary of  $C_\epsilon'(\gamma)$ on $S'_{max}$. For each segment $\eta'_i$ of $\eta'$ ($i \in \{1,\ldots, n\}$) we consider its end points $p_i$ and $q_i$ and can interpret them as well as boundary points of $C_\epsilon(\gamma)$. We define $\hat{\eta}_i$ to be the unique geodesic segment in the cylinder  $C_\epsilon(\gamma)$ connecting $p_i$ and $q_i$. Then we have that $ \hat{\eta} = \hat{\eta}_0 \cup \hat{\eta}_1 \cup \ldots \cup \hat{\eta}_N$ and
\[\ell(\hat{\eta}) = \ell(\hat{\eta}_0) +  \ell(\hat{\eta}_1) + \ldots + \ell(\hat{\eta}_n),\quad \ell(\hat{\eta}_0) = \ell(\eta'_0) \mbox{ and } \ell(\eta'_i) < \ell(\hat{\eta}_i) \mbox{ for } i \in \{1,\ldots, N\}. \]
Similarly as above $\hat{\eta}_i$ is the hypotenuse in an Euclidean right triangle with a vertical cathetus of length $\epsilon$ and a horizontal cathetus of the same length $d_i$ from above. In particular we have that $\ell(\hat{\eta}_i)^2 = \epsilon^2 + d_i^2$ (cf. Figure~\ref{fig:cuts3}).

\begin{figure}[h!]
  \centering
  \begin{tikzpicture}[scale=1,>=stealth',shorten >=1pt,auto,node distance=3cm,thick]
    \tikzstyle{place}=[circle,thick,draw=black!,minimum size=6mm]
    \tikzstyle{bullet}=[circle,thick,fill=black!,minimum size=2mm]
    \draw (-3,1) -- (3,1);
    \draw (-3,-1) -- (3,-1);
    \draw (-2,-2) -- (0,2);  
    \draw (-.5,1) -- (-.5,-1);
    \node (x1) at (-.3,0) {$\varepsilon'$};
    \node (x5) at (-.9,-1.3) {$d_i$};
    \node (x9) at (-1.4,-.15) {$\eta_i'$};
    \node (x3) at (2.5,0) {$C_{\epsilon'}(\gamma)$};
    \vertex[bullet, label = {[shift={(0.0,0.06)}]$p_i$}] (pi)  at (-.5,1) {};
    \vertex[bullet, label = {[shift={(0,-.8)}]$q_i$}] (qi)  at (-1.5,-1) {};
    \begin{scope}[shift={(-1,0)}]
    \draw (7,2) -- (13,2);
    \draw (7,-2) -- (13,-2);
    \draw (8.5,-2) -- (9.5,2);
    \draw (9.5,2) -- (11,3.5);
    \draw (8.5,-2) -- (7.5,-3);
    \draw (9.5,2) -- (9.5,-2);
    \node (x1) at (9.8,0) {$\varepsilon$};
    \node (x5) at (9.1,-2.5) {$d_i$};
    \node (x9) at (8.5,-.35) {$\hat{\eta_i}$};
    \node (x3) at (12.5,0) {$C_{\epsilon}(\gamma)$};
    \vertex[bullet, label = {[shift={(0.0,0.06)}]$p_i$}] (pi)  at (9.5,2) {};
    \vertex[bullet, label = {[shift={(0,-.8)}]$q_i$}] (qi)  at (8.5,-2) {};
    \end{scope}
  \end{tikzpicture} 
  \caption{Left side: Right triangle with hypotenuse $\eta_i'$ \quad Right side: Right triangle with hypotenuse $\hat{\eta}_i$}
  \label{fig:cuts3}
\end{figure}

Let $\eta$ be a geodesic homotopic to $\hat{\eta}$. Then $\eta$ intersects $\gamma$ as well with intersection number $N$ and thus by our assumption is not a systole. Therefore we have
\[\ell(\hat{\eta}) > \ell(\eta) > \smax + \delta.\]
We now obtain the following computation for $\ell(\hat{\eta_i})^2 - \ell(\eta'_i)^2$:
\[
\ell(\hat{\eta}_i)^2 - \ell(\eta'_i)^2 =  \epsilon^2 + d_i^2 - (\epsilon'^2 + d_i^2) = \epsilon^2 - \epsilon'^2 = (1-r^2)\cdot \epsilon^2.
\]
Recall that $\ell(\hat{\eta}_i) > \ell(\eta'_i) > 0$.  This leads to the following estimation of $\ell(\hat{\eta_i}) - \ell(\eta_i')$:
\[(\ell(\hat{\eta_i}) - \ell(\eta'_i))^2 <  \ell(\hat{\eta}_i)^2 - \ell(\eta'_i)^2 = (1-r^2)\cdot \epsilon^2\]
If we now choose $r$ such that we have in addition to $r > \frac{1}{2}$ that $\sqrt{1-r^2} < \frac{\delta}{2\smax}$ then we obtain:
\[\ell(\hat{\eta_i}) - \ell(\eta'_i) <  \sqrt{1-r^2}\cdot \epsilon <  \delta\cdot \frac{\epsilon}{2\smax} \]
This gives us for the length $\ell(\eta')$:
\[\begin{array}{lcl}
  \ell(\eta') &=& \ell(\eta'_0) + \sum_{i=1}^N\ell(\eta'_i) = \ell(\hat{\eta}_0) + \sum_{i=1}^N\ell(\eta'_i) > \ell(\hat{\eta}_0) +  \sum_{i=1}^N(\ell(\hat{\eta}_i) - \delta\cdot \frac{\epsilon}{2\smax})\\
  &=& \ell(\hat{\eta}) - N\cdot \delta\cdot \frac{\epsilon}{2\smax} > \ell(\hat{\eta}) - \delta \geq \ell(\eta) - \delta > \smax + \delta - \delta = \smax.
  \end{array}\]
This finishes the proof of  Theorem \ref{thm:char_Smax2}.

\end{proof}

\section{Systoles and the graph of saddle connections}\label{section:gos}

Suppose that $S$ is a translation surface of genus $g \geq 2$. Recall that it follows from Proposition~\ref{prop:ConePoint} that there exists a systole of $S$ which is a concatenation of saddle connections. This motivates us to consider the graph of saddle connections introduced in Definition~\ref{def:GraphOfSaddleconnections}. In this section we study how the systole of this graph and the systole of the surface are related.

\begin{defi}\label{def:GraphOfSaddleconnections}
The \emph{graph $\Gamma$ of saddle connections} of a translation surface $S$ is the following graph: Its vertices are the singularities of $S$. It has an edge between two vertices for each saddle connection which connects the two corresponding singularities. $\Gamma$ becomes a weighted graph by assigning to each edge the length of the corresponding saddle connection. 
\end{defi}

We write for an edge-path $c$ in a graph $c = c_1\ldots c_n$, if $c$ is the concatenation of the edges $c_1$, \ldots, $c_n$. Observe that the edge-path $c = c_1\ldots c_n$ defines the  path $\gamma = \gamma_1\cdot \ldots \cdot \gamma_n$  on $S$, where $\gamma_i$ is the saddle connection corresponding to $c_i$. We consider $\gamma$ as a directed path on $S$ and call it  \textit{the realization of $c$}. Reversely, any concatenation $\gamma$ of saddle connections on $S$ defines an edge-path $c$ in $\Gamma$.
The graph $\Gamma$ becomes a metric space by assigning to each edge its weight as length. We denote the {\em length of an edge-path} $c$ with respect to this metric by $l(c)$, i.e. $l(c)$ is the sum of the weights of the edges of $c$. We further consider the {\em combinatorial length} of $c$ which is the number of edges that $c$ contains. Recall that an edge path $c = c_1\ldots c_n$ has {\em backtracking}, if there are two consecutive edges $c_i$ and $c_{i+1}$ which are inverse to each other. The edge-path $c$ is called {\em reduced}, if it does not have backtracking. We say that $c$ is \emph{trivial}, if it is of combinatorial length 0, i.e. the only trivial edge-path is the empty path. 
We call a non-trivial closed reduced edge-path $c$ of minimal length $l(c)$ in $\Gamma$ {\em a systole of $\Gamma$}.\\[3mm]
We can  now use the graph $\Gamma$ for the study of systoles of the surface $S$ in the following way:
By Proposition~\ref{prop:ConePoint} we have  a systole $\gamma$ of $S$ which is a concatenation of saddle connections. Such a systole $\gamma$ defines a non-trivial reduced closed edge-path $c$ in $\Gamma$. However, $c$ has not to be a systole of $\Gamma$, since there might be shorter closed edge-paths in $\Gamma$ whose realizations are not geodesic, see Example~\ref{ex:Exception}. This happens if the realization is null-homotopic. Remark~\ref{prop:nullhomotopicSystoles} and Theorem~\ref{n3thm} describe the connection between systoles of the surface $S$ and systoles of the graph $\Gamma$.\\

As a main ingredient we use the following description of geodesics on translation surfaces by Dankwart from \cite{Dankwart}. Observe that what is called a 'local geodesic'  in \cite{Dankwart} is what we call a 'geodesic'  in our article.

\begin{lem}[{\cite{Dankwart}, Lemma 2.4}] \label{LemmaDankwart}
  A path $c: [0,T] \to  S$ is a geodesic if and only if it is continuous and a sequence of straight line segments outside the set  $\Sigma$ of singularities. At a singularity $x = c(t)$ the consecutive line segments make a flat angle at least $\pi$ with respect to both boundary orientations. 
\end{lem}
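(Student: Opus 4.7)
The lemma has two separable assertions: a purely local characterization of geodesics in terms of straight segments plus an angle condition at singularities, and a global extendability statement. I would tackle them in that order, since the extension is essentially a cut-and-paste of the local picture.

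For the local characterization, the behavior on $S \setminus \Sigma$ is automatic: there the surface is locally isometric to an open subset of $\R^2$, so the definition of a geodesic (local isometry onto its image) forces $c$ to be piecewise straight, while the continuity is already part of the definition of a path. All the real content sits at each singular value $x = c(t_0)$ with cone angle $2\pi(k+1) \geq 4\pi$. Choosing $r < r_x(S)$ so that $U := B_r(x)$ is a flat metric cone, the incoming and outgoing straight segments of $c$ at $x$ split $U$ into two sectors of angles $\theta_1, \theta_2$ with $\theta_1 + \theta_2 = 2\pi(k+1)$.

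I would then prove the equivalence of "$c$ is an isometry on a neighborhood of $t_0$" with "$\min(\theta_1,\theta_2) \geq \pi$" by developing the cone into $\R^2$. If, say, $\theta_1 < \pi$, then the closed sector of angle $\theta_1$ develops isometrically to a convex Euclidean sector, and the chord from $c(t_0-\epsilon)$ to $c(t_0+\epsilon)$ inside it has Euclidean length $2\epsilon\sin(\theta_1/2) < 2\epsilon$, yielding a strict shortcut that rules out the isometry property. Conversely, if both $\theta_i \geq \pi$, then any path $\sigma$ in $U$ between the same endpoints either passes through $x$ (whence $\ell(\sigma) \geq 2\epsilon$ trivially) or lies in one open sector of angle $\theta_i \geq \pi$; unrolling that sector into $\R^2$ places the endpoints at Euclidean distance $2\epsilon\sin(\theta_i/2) \geq 2\epsilon$, so $\ell(\sigma) \geq 2\epsilon$ again. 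This gives the length-minimization on a neighborhood of $t_0$ which, combined with straightness outside $\Sigma$, is precisely the geodesic condition.

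For the extension, at a regular endpoint $c(T) \notin \Sigma$ one simply prolongs $c$ along the unique straight ray in its terminal direction until it first meets a singularity or runs to infinity. At a singular endpoint $c(T) \in \Sigma$ with cone angle $\geq 4\pi$ one must choose an outgoing direction whose two angles with the incoming direction are both $\geq \pi$; such a direction exists because the total angle is at least $4\pi$, so the direction antipodal to the incoming direction in either half of the cone works. By the characterization just proved, the prolongation is again a local geodesic. The same recipe applies at $t=0$, and iterating (formally, applying Zorn's lemma to the inclusion-ordered poset of geodesic extensions of $c$, and noting that unions of chains of local geodesics are local geodesics) yields a maximal extension $c':\R \to S$. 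The main obstacle throughout is the no-shortcut direction of the angle condition: one must carefully develop the cone to translate a question about curves passing near a singular point into a plane Euclidean distance comparison. The paper's $\cat(0)$ viewpoint on translation surfaces makes this essentially a restatement of the geodesic convexity of sectors of angle $\geq \pi$ inside a cone of angle $\geq 4\pi$.
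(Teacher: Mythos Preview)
The paper does not prove this lemma at all: it is quoted as a result of Dankwart \cite{Dankwart} and used as a black box. Your proposal therefore supplies an argument where the paper gives none, and the overall strategy---reduce everything to the local model of a Euclidean cone of angle $\geq 4\pi$ and compare competing paths by developing into the plane---is the standard and correct one.

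There is one genuine slip in the ``no shortcut'' direction. You write that if a competitor $\sigma$ stays in one open sector of angle $\theta_i \geq \pi$, then unrolling places the endpoints at Euclidean distance $2\epsilon\sin(\theta_i/2) \geq 2\epsilon$. This is false: for $\pi < \theta_i \leq 2\pi$ one has $\sin(\theta_i/2) < 1$, and for $\theta_i > 2\pi$ the sector does not even embed isometrically in $\R^2$. The correct statement is that when $\theta_i \geq \pi$ the Euclidean chord between the developed endpoints leaves the sector, so the infimum of lengths of paths \emph{inside the sector} is realized only in the limit by paths approaching the apex, and that infimum equals $2\epsilon$. One clean way to see this is to pass to the universal cover of the punctured cone (polar coordinates $(r,\varphi)$ with $\varphi \in \R$) and observe that once the angular separation is at least $\pi$, the length-minimizing path degenerates to the broken segment through $r=0$. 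With this correction your local characterization goes through, and your extension argument (choose an outgoing direction at a cone point making angle $\geq \pi$ on both sides, which exists since the total angle is $\geq 4\pi$) is fine; Zorn's lemma is harmless but unnecessary, since on a compact translation surface one simply iterates the one-step extension.
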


Motivated by Lemma~\ref{LemmaDankwart} we consider for closed edge-paths $c = c_1\ldots c_n$ in $\Gamma$ at each vertex $v_i$ between two edges $c_{i-1}$ and $c_i$ ($i$ indexed in $\ZZ/n\ZZ$) the two angles $\alpha_i$ and $\beta_i$ between the corresponding saddle connections $\gamma_{i-1}$ and $\gamma_i$. If for all $i$ these angles are at least $\pi$, we say that {\em all angles of $c$ are greater or equal to $\pi$}. By Lemma~\ref{LemmaDankwart} the realization $\gamma$ of a closed edge path $c$ is geodesic if and only if all angles of $c$ are greater or equal to $\pi$.

\begin{defi}
  Let $\Gamma$ be the graph of saddle connections of a translation surface $S$. We define:
  \[\GIG = \{c |\; c \mbox{ is a closed reduced edge-path in $\Gamma$ such that all angles of $c$ are greater or equal to $\pi$}\}\]
\end{defi}

Now, we have all notation to state how we determine systoles of the surface $S$ from the systoles of the graph of saddle connections $\Gamma$.

\begin{rem}\label{prop:nullhomotopicSystoles}
  Let $c$ be be a closed reduced edge-path in $\Gamma$. Its realization $\gamma$ is a systole of the surface $S$ if and only if  $\gamma$ is geodesic and $c$ is of minimal length among all  closed reduced edge-paths in $\Gamma$ whose realization is geodesic.
  This is the case, if and only if $c$ is an element of minimal length in $\GIG$.
\end{rem}

\begin{proof}
  The claim follow from Proposition~\ref{prop:ConePoint}, Lemma~\ref{LemmaDankwart} and the general fact that  geodesics\footnote{Recall that we use the term geodesic for what is called locally geodesic in \cite{Dankwart}} in local CAT(0)-spaces are never null homotopic.
\end{proof}

Example \ref{ex:Exception} shows an example of a translation surface which has a systole of the graph of saddle connections which is not a  systole of  the translation surface.  
However, this disturbing case only rarely occurs, as it follows from the following theorem which is Theorem~\ref{n3thm} of the introduction:

\begin{thm}\label{n3thmN}
  Let $c$ be a systole in the graph $\Gamma$ and let $\gamma$ be its realization. If the combinatorial length of $c$ is not 3, then $\gamma$ is a systole of the surface. If the combinatorial length of $c$ is 3 and all angles of $c$ are greater or equal to $\pi$, then $\gamma$ is a systole of the surface.
\end{thm}

For the proof of Theorem~\ref{n3thmN} we need to first have a closer look on Lemma~\ref{LemmaDankwart} of Dankwart. In Corollary~\ref{ShorteningLemma} we reprove part of the statement of this lemma, since the method we use will give us the ingredients we need to show Theorem~\ref{n3thmN}.
We use the following notations:
For a directed path $\gamma$ on $S$ we denote  its inverse path by $\gamma^{-}$.
    For two points $a$ and $b$ on $\gamma$ we denote by $\gamma_{a,b}$ the shortest subpath
  of $\gamma$ from $a$ to $b$, i.e. $\gamma_{a,b}$ is a subpath of $\gamma$
  which starts at $a$ and ends as soon as $\gamma$ hits the first time $b$. We further
  denote by $l(\gamma_{a,b})$ the length of $\gamma_{a,b}$.  Finally, an {\em embedded triangle} $\Delta$ of $S$ is the image of a map $\phi:T \to S$ with the following properties: $T$ is a triangle of the Euclidean plane $\RR^2$ such that $\phi(T^{\mbox{\fs int}})$ does not contain any singularity. Furthermore, $\phi|_{T^{\mbox{\fs int}}}$ is an embedding such that for each chart $(U,\tau)$ of the translation surface $S$ with $U \cap \phi(T^{\mbox{\fs int}}) \neq \emptyset$, we have that $\tau \circ \phi|_{\phi^{-1}(U)}$ is a translation. Here $T^{\mbox{\fs int}}$ denotes the interior of $T$. The {\em vertices} and {\em edges} of $\Delta$ are the images of the vertices and edges of $T$. Observe that the edges of $\Delta$ are by construction geodesics on $S$.

\begin{lem}\label{The2Case}
Let $\gamma_1$ be a saddle connection from $v_1$ to $v_2$ and $\gamma_2 \neq {\gamma_1}^{-}$ a saddle connection from $v_2$ to $v_3$ such that one of the two angles at $v_2$ between $\gamma_1$ and $\gamma_2$ is smaller than $\pi$. Then there is a  concatenation of saddle connections $\delta$ from $v_1$ to $v_3$ that is homotopic to $\gamma_1\cdot \gamma_2$ and shorter than $\gamma_1\cdot \gamma_2$. We allow that $v_1 = v_3$. Furthermore, $\delta$ is non-trivial, i.e. it defines a non-trivial path in the graph of saddle connections.
\end{lem}

\begin{proof}
  Let $x_1 \neq v_2$ be the point on $\gamma_2$ of largest distance from $v_2$ such that there exists an embedded Euclidean triangle $\Delta_1$ on $S$ with vertices $v_1$, $v_2$ and $x_1$ and edges $\alpha^{\sm{(1)}} = \gamma_1$ and $\beta^{\sm{(1)}} = (\gamma_2)_{v_2,x_1}$. Let $\gamma^{\sm{(1)}}$ be the third edge of $\Delta_1$ from $v_1$ to $x_1$. By the triangle inequality we have that
  \begin{equation}\label{Delta1}
    l(\gamma^{\sm{(1)}}) < l(\alpha^{\sm{(1)}}) + l(\beta^{\sm{(1)}}).
  \end{equation}
  \begin{center}
      \begin{figure}[h!]
        \begin{tikzpicture}[scale=1,<->,>=stealth',shorten >=1pt,auto,node distance=3cm,thick]
          \tikzstyle{place}=[circle,thick,draw=black!,minimum size=6mm]
          \tikzstyle{bullet}=[circle,thick,fill=black!,minimum size=2mm]
          \vertex[bullet, label = left:$v_1$](v1) at (0,0) {};
          \vertex[bullet, label = left:$v_2$](v2) at (6,8) {};
          \vertex[bullet, label = right:$v_3$](v3) at (10,0) {};
          \node [label = {[shift={(0.0,-0.35)}]$x_1$}] (x1)  at (7.2,6.2) {};
          \node [label = {$\beta^{\sm(1)}$}] (t1)  at ($ (v2) !.7! (x1) $) {};
          \node (Delta1) at (5,5.5) {$\Delta_1$};
          \vertex [bullet, label = below:{$w_1$}] (w1)  at ($ (v1) !.3! (x1) $) {};
          \node [label = {[shift={(0,0)}]below:$\gamma^{\sm(1)}$}] (t2)  at ($ (w1) !.6! (x1) $) {};
          \path[-, shorten >=-1mm] (v1) edge  node {$\alpha^{\sm(1)} = \gamma_1$} (v2);
          \path[- ,  shorten >= -1mm] (v2)  edge    node {} (v3);
          \path[-,  shorten >= 1mm] (v1) edge node {} (x1);
         \end{tikzpicture} 
        \caption{Filling in a triangle in a shortest closed concatenation of saddle connections on $S$.}
        \label{fig:Delta1}
      \end{figure}
  \end{center}
  We first consider the case that $x_1 = v_3$. In this case $\delta = \gamma^{\sm{(1)}}$ is a saddle connection from $v_1$ to $v_3$ and does the desired job.\\[3mm]
  Suppose now  $x_1 \neq v_3$ and therefore $x_1$ lies on the interior of $\gamma_2$. Then the geodesic segment $\gamma^{\sm{(1)}}$ from $v_1$ to $x_1$ contains a singularity which we call $w_1$ (cf. \textit{Figure \ref{fig:Delta1}}). If there are more than one singularities on $\gamma^{\sm{(1)}}$ then we choose the one closest to $x_1$. We consider the concatenation $\delta_1 = {\gamma^{\sm{(1)}}}_{w_1,x_1}\cdot {(\gamma_2)}_{x_1,v_3}$ of two geodesic segments. Its angle at $x_1$ is smaller than $\pi$. We can again fill in a triangle $\Delta_2$. More precisely, we choose the point $x_2 \neq x_1$ on ${(\gamma_2)}_{x_1,v_3}$ at largest distance from $x_1$ such that there exists an embedded triangle $\Delta_2$ with vertices $w_1$, $x_1$ and $x_2$ and edges $\alpha^{\sm{(2)}} = {{\gamma^{\sm{(1)}}}}_{w_1,x_1}$ and $\beta^{\sm{(2)}} = {(\gamma_2)}_{x_1,x_2}$. We denote its third edge  from $w_1$ to $x_2$  by $\gamma^{\sm{(2)}}$. Again by the triangle inequality we have:
  \begin{equation} \label{Delta2}l(\gamma^{\sm{(2)}}) < l(\alpha^{\sm{(2)}}) + l(\beta^{\sm{(2)}}).\end{equation}
  We denote $\gamma'_1 = {{\gamma^{\sm{(1)}}}}_{v_1,w_1}$  and $\delta_2 = \gamma_1'\cdot \gamma^{\sm{(2)}} \cdot {(\gamma_2)}_{x_2,v_3}$ (path in thick lines in \textit{Figure~\ref{fig:Deltak}}) and obtain:
  \[l(\delta_2) = l(\gamma_1') + l(\gamma^{\sm{(2)}}) + l({(\gamma_2)}_{x_2,v_3}) \stackrel{(\ref{Delta2})}{<} l(\gamma^{\sm{(1)}}) + l(\beta^{\sm{(2)}}) + l({(\gamma_2)}_{x_2,v_3}) \stackrel{(\ref{Delta1})}{<} l(\gamma_1) + l(\gamma_2).\]
  If $x_2 = v_3$, then the path $\delta = \delta_2$ is a concatenation of saddle connections from $v_1$ to $v_3$ which is by construction homotopic to $\gamma_1 \cdot \gamma_2$ and shorter than $\gamma_1 \cdot \gamma_2$, hence it does the desired job.\\[3mm]
  If $x_2 \neq v_3$, then we iterate the process of gluing in triangles: Suppose that for $k \geq 3$ the vertex $x_{k-1}$ of triangle $\Delta_{k-1}$ fulfills $ x_{k-1} \neq v_3$. This implies that its edge $\gamma^{\sm{(k-1)}}$ contains in its interior a singularity $w_{k-1}$.
  \begin{center}
    \begin{figure}[h!]
      \begin{tikzpicture}[scale=1,<->,>=stealth',shorten >=1pt,auto,node distance=3cm,thick]
      \tikzstyle{place}=[circle,thick,draw=black!,minimum size=6mm]
      \tikzstyle{bullet}=[circle,thick,fill=black!,minimum size=2mm]
            \vertex[bullet, label = left:$v_1$](v1) at (0,0) {};
            \vertex[bullet, label = left:$v_2$](v2) at (6,8) {};
            \vertex[bullet, label = right:$v_3$](v3) at (10,0) {};
            \node [label = {[shift={(0.0,-0.35)}]$x_1$}] (x1)  at (7.2,6.2) {};
            \node [label = {$\beta^{\sm(1)}$}] (t1)  at ($ (v2) !.7! (x1) $) {};
            \node (Delta1) at (5,5.5) {$\Delta_1$};
            \node [label = {[shift={(0,0.1)}]below:$\gamma_1'$}] (t5)  at ($ (v1) !.4! (w1) $) {};  
            \vertex [bullet, label = below:{$w_1$}] (w1)  at ($ (v1) !.3! (x1) $) {};
            \node  [label  = {[shift={(0.2,-.25)}]$x_2$}] (x2)  at ($ (v2) !.5! (v3) $) {};
            \node (Delta2) at (6.3,4.3) {$\Delta_2$};
             \node [label = {[shift={(.5,0)}]$\gamma^{\sm(2)}$}] (t3)  at ($ (w1) !.55! (x2) $) {};  
            \node [label = {[shift={(.1,0)}]$\gamma^{\sm(1)}$}] (t2)  at ($ (w1) !.2! (x1) $) {};
            \node [label = {[shift = {(-.15,0)}]below:$\alpha^{\sm(2)}$}] (t3)  at ($ (w1) !.55! (x1) $) {};
            \node [label = {[shift = {(0.1,0.2)}]above:$\beta^{\sm(2)}$}] (t4)  at ($ (v2) !.8! (x2) $) {};
            \node  [label  = {[shift={(0.25,-.25)}]$x_3$}] (x3)  at ($ (x2) !.6! (v3) $) {};
            \vertex [bullet, label = below:{$w_2$}] (w2)  at ($ (w1) !.3! (x2) $) {};
            \node [label = {[shift={(0,0.1)}]below:$\gamma_2'$}] (t6)  at ($ (w1) !.55! (w2) $) {};
            \node (Delta3) at (7.2,2.8) {$\Delta_3$};
            \vertex [bullet, label = below:{$w_3$}] (w3)  at ($ (w2) !.5! (x3) $) {};
            \node [label = {[shift={(0,0.15)}]below:$\gamma_3'$}] (t7)  at ($ (w2) !.55! (w3) $) {};            
            \node  [label  = {[shift={(0.25,-.25)}]$x_4$}] (x4)  at ($ (x3) !.6! (v3) $) {};
            \node (Delta4) at (9,1.3) {$\Delta_4$};
            \node  (delta4) [label =  {[shift={(0.35,0.4)}]$\mathbf{\delta_2}$}] at ($ (x2) !.4! (v3) $) {};

            \vertex [bullet, label = below:{$w_4$}] (w4)  at ($ (w3) !.4! (x4) $) {};
            \node [label = {[shift={(0,0.1)}]below:$\gamma_4'$}] (t8)  at ($ (w3) !.45! (w4) $) {};            
            \node [label = {[shift={(0,0.1)}]below:}] (t9)  at ($ (w4) !.45! (v3) $) {};    
            \path[-, shorten >= -2mm, line width = 3 ] (w1) edge node{} (x2);
            \path[-, shorten >= -1mm, line width = 3 ] (v1) edge node{} (w1);
            \path[-, shorten <= -3mm, shorten <= -1mm ,shorten >= -2mm, line width = 3 ] (x2) edge node{} (v3);
            \path[-, shorten >=-1mm] (v1) edge  node {$\alpha^{\sm(1)} = \gamma_1$} (v2);
            \path[-] (v2)  edge    node {} (v3);
            \path[-,  shorten >= 1mm] (v1) edge node {} (x1);

            \path[- , shorten >= -2mm] (w1) edge node {} (x2);

           \path[- , shorten >= -2mm] (w2) edge node {} (x3);
            \path[- , shorten >= -2mm] (w3) edge node {} (x4);
            \path[- , shorten >= -2mm] (w4) edge node {} (v3);
 
          \end{tikzpicture} 
    \caption{Iteration of filling in triangles in a shortest closed concatenation of saddle connections on $S$.}
    \label{fig:Deltak}
    \end{figure}
  \end{center}

  The concatenation ${\gamma^{\sm{(k-1)}}}_{w_{k-1},x_{k-1}}\cdot (\gamma_2)_{x_{k-1},v_3}$ has at $x_{k-1}$ an angle smaller than $\pi$. We can fill in the triangle $\Delta_k$ as follows. We choose $x_{k}$ as point on ${(\gamma_2)}_{x_{k-1},v_3}$ at largest distance from $x_{k-1}$ such that  there exists an embedded triangle $\Delta_k$ with vertices $w_{k-1}$, $x_{k-1}$ and $x_k$ and edges $\alpha^{\sm{(k)}} = {{\gamma^{\sm{(k-1)}}}}_{w_{k-1},x_{k-1}}$ and $\beta^{\sm{(k)}} = {(\gamma_2)}_{x_{k-1},x_k}$. We denote its third edge  from $w_{k-1}$ to $x_{k}$  by $\gamma^{\sm{(k)}}$. By the triangle inequality we then have:
  \begin{equation} \label{Deltak}l(\gamma^{\sm{(k)}}) < l(\alpha^{\sm{(k)}}) + l(\beta^{\sm{(k)}}).\end{equation}
  We denote $\gamma'_{k-1} = {{\gamma^{\sm{(k-1)}}}}_{w_{k-2},w_{k-1}}$  and $\delta_{k} = \gamma'_1\cdot \ldots \cdot \gamma'_{k-1}\cdot \gamma^{\sm{(k)}} \cdot {(\gamma_2)}_{x_k,v_3}$ and obtain:
  \[\begin{array}{lcl}
    l(\delta_k) &=& l(\gamma_1') + \ldots + l(\gamma_{k-1}') + l(\gamma^{\sm{(k)}}) + l({(\gamma_2)}_{x_k,v_3}) \\
    &\stackrel{\sm (\ref{Deltak})}{<}& l(\gamma_1') + \ldots + l(\gamma_{k-1}') +  l(\alpha^{\sm{(k)}}) + l(\beta^{\sm{(k)}}) +  l({(\gamma_2)}_{x_k,v_3})\\
    &=&  l(\gamma_1') + \ldots + l(\gamma_{k-2}') +  l(\gamma^{\sm{(k-1)}}) + l({(\gamma_2)}_{x_{k-1},v_3})\\
    &=& l(\delta_{k-1}) \stackrel{\mbox{\footnotesize{induction}}}{<} l(\gamma_1) + l(\gamma_2).
    \end{array}\]
  Since the singularities lie discrete in $S$, this process must stop. Hence there exists some $k$ with $x_k = v_3$. We then have
  that $\delta = \delta_k$ is a concatenation of saddle connections from $v_1$ to $v_3$ which is homotopic to $\gamma_1\cdot \gamma_2$ and which is shorter than $\gamma_1\cdot \gamma_2$. By construction, $\delta$ is non trivial. 
\end{proof}

\begin{cor}\label{ShorteningLemma} 
  Let $c$ be a reduced closed path in $\Gamma$ of combinatorial length $\geq 2$ with an angle at one of its vertices smaller than $\pi$ and let $\gamma$ be its realization. Then there is a reduced closed edge path $\tilde{c}$ in $\Gamma$ (possibly trivial) with realization $\tilde{\gamma}$ which is shorter than $c$ and such that $\tilde{\gamma}$ is homotopic to $\gamma$.
\end{cor}

\begin{proof}
   We write the edge-path $c$ as concatenation $c_1\ldots c_n$ ($n \geq 2$) of edges and its realization $\gamma$ as
  concatenation $\gamma_1\cdot \ldots \cdot \gamma_n$ of the corresponding saddle connections.  Denote by $v_1$ the starting point of $\gamma_1$,
  by $v_2$ the end point of $\gamma_1$ which is equal to the starting point of $\gamma_2$ and by $v_3$
  the end point of $\gamma_2$. We assume without loss of generality that one of the two angles at $v_2$ is $< \pi$.\\[3mm]
It follows from Lemma~\ref{The2Case}  that there is a concatenation of saddle connections $\delta$ from $v_1$ to $v_3$ that is homotopic to $\gamma_1\cdot \gamma_2$ and shorter than $\gamma_1\cdot \gamma_2$. Let $d$ be the edge path corresponding to $\delta$ in the graph $\Gamma$, $c'$ the concatenation of $d$ and $c_3\ldots c_n$ and $\tilde{c}$ its reduction, i.e. we remove all backtracking from $c'$. Then $\tilde{c}$ has the properties required in the claim.

\end{proof}

\begin{rem}\label{AddShorter}
  With the notation of the proofs  of  Lemma~\ref{The2Case} and Corollary~\ref{ShorteningLemma} we choose $k_{\max}$ such that $x_{k_{\max}} = v_3$, i.e. we are in the situation when the process stops. Suppose that $k_{\max} \geq 2$. Let $s$ be the unique geodesic from $w_1$ to $v_2$ within the triangle $\Delta_1$ (see \textit{ Figure~\ref{fig:Deltak2}}) and $\delta$ be the concatenation $\delta = \delta_{k_{\max}} = \gamma_1' \cdot \ldots \cdot \gamma_{k_{\max}-1}' \cdot \gamma^{\sm{(k_{\max})}}$. Then we have:
  \[l(\gamma_1) + l(s) + l(\gamma_1') < l(\gamma_1) + l(\gamma_2) + l(\delta).\]
  If the reduced closed edge-path $c = c_1\ldots c_n$ with an angle  at one of its vertices smaller than $\pi$  with which we started in  Corollary~\ref{ShorteningLemma} is in addition a systole in the graph $\Gamma$, then this implies that $c = c_1c_2d^{-}$, where $d$ is the edge-path defined by $\delta$. Hence we have in this case for the realization $\gamma$ of $c$ that $\gamma = \gamma_1\cdot\gamma_2\cdot\delta^{-}$ and it is in particular null homotopic.
\end{rem}

\begin{center}
    \begin{figure}[h!]
      \begin{tikzpicture}[scale=1,<->,>=stealth',shorten >=1pt,auto,node distance=3cm,thick]
      \tikzstyle{place}=[circle,thick,draw=black!,minimum size=6mm]
      \tikzstyle{bullet}=[circle,thick,fill=black!,minimum size=2mm]
            \vertex[bullet, label = left:$v_1$](v1) at (0,0) {};
            \vertex[bullet, label = left:$v_2$](v2) at (6,8) {};
            \vertex[bullet, label = right:${v_3 = x_{k_{\max}}}$](v3) at (10,0) {};
            \node [label = {[shift={(0.0,-0.4)}]$x_1$}] (x1)  at (7.2,6.2) {};
            \node [label = {$\beta^{\sm(1)}$}] (t1)  at ($ (v2) !.7! (x1) $) {};
            \node (Delta1) at (5.5,5.5) {$\Delta_1$};
            \vertex [bullet, label = below:{$w_1$}] (w1)  at ($ (v1) !.3! (x1) $) {};
            \node [label = {[shift={(0,0.1)}]below:$\gamma_1'$}] (t5)  at ($ (v1) !.4! (w1) $) {};         
            \node  [label  = {[shift={(0.2,-.25)}]$x_2$}] (x2)  at ($ (v2) !.5! (v3) $) {};
            \node (Delta2) at (6.3,4.3) {$\Delta_2$};
             \node [label = {[shift={(.5,0)}]$\gamma^{\sm(2)}$}] (t3)  at ($ (w1) !.55! (x2) $) {};  
            \node [label = {[shift={(.5,0.2)}]$\gamma^{\sm(1)}$}] (t2)  at ($ (w1) !.2! (x1) $) {};
            \node [label = {[shift = {(-.15,0)}]below:$\alpha^{\sm(2)}$}] (t3)  at ($ (w1) !.55! (x1) $) {};
            \node [label = {[shift = {(0.2,0)}]above:$\beta^{\sm(2)}$}] (t4)  at ($ (v2) !.8! (x2) $) {};
            \vertex [bullet, label = below:{$w_2$}] (w2)  at ($ (w1) !.3! (x2) $) {};
            \node [label = {[shift={(0,0.1)}]below:$\gamma_2'$}] (t6)  at ($ (w1) !.55! (w2) $) {};
            \vertex [bullet] (w3)  at ($ (w2) !.5! (x3) $) {};
            \node  [label  = {[shift={(0.8,-.5)}]$x_{k_{\max}-1}$}] (x4)  at ($ (x3) !.6! (v3) $) {};
            \node  (delta4) [label =  {[shift={(0.35,0.4)}]$\mathbf{\delta_2}$}] at ($ (x2) !.4! (v3) $) {};

            \vertex [bullet, label = {[shift = {(-0.2,0)}]below:{$w_{k_{\max}-1}$}}] (w4)  at ($ (w3) !.4! (x4) $) {};

            \node [label = {[shift={(0,0.5)}]below:$\gamma_{k_{\max} - 1}'$}] (t8)  at (6,.75) {};

            \node [label = {[shift={(-.2,0.1)}]below:$\gamma^{\sm{(k_{\max})}}$}] (t9)  at ($ (w4) !.45! (v3) $) {};    
            \path[-, shorten >= -2mm] (w1) edge node{} (x2);
            \path[-, shorten >= -1mm] (v1) edge node{} (w1);
            \path[-, shorten <= -3mm, shorten <= -1mm ,shorten >= -2mm] (x2) edge node{} (v3);
            \path[-] (v1) edge  node {$\alpha^{\sm(1)} = \gamma_1$} (v2);
            \path[- ,shorten >= -1mm] (v2)  edge    node {} (v3);
           
            \path[- , shorten >= 1mm] (v1) edge node {} (x1);        
            
            \path[- , shorten >=  -1.5mm] (w2) edge node {} (x3);
            \path[- , shorten >= -1.5mm] (w3) edge node {} (x4);
            \path[- , shorten >= -2mm] (w4) edge node {} (v3);

            \path[-, shorten >= -1mm] (w1) edge node {} (v2);
            \node [label = {[shift = {(-.1,0)}]below:$s$}] (t15)  at ($ (w1) !.55! (v2) $) {};

            \node [label = {}] (t16) at ($ (w3) !.5! (w4) $) {};
            \path[->,  bend angle=40, bend left ]  (t8) edge node {} (t16);

          \end{tikzpicture} 
    \caption{Finding a short cut through $\Delta_1$.}
    \label{fig:Deltak2}
    \end{figure}
  \end{center}

\begin{proof}
  Using the triangle inequalities $l(s) < l(\beta^{\sm{(1)}}) + l(\alpha^{\sm{(2)}})$, $l(\alpha^{\sm{(k)}}) < l(\beta^{\sm{(k)}}) + l(\gamma^{\sm{(k)}})$ for $k \in \{1, \ldots, k_{\max}\}$ and the identities
  $\gamma^{\sm{(k-1)}} = \gamma_{k-1}' \cdot \alpha^{(k)}$ for $k \in \{2,\ldots, k_{\max - 1}\}$, we obtain:
  \[\begin{array}{lcl}
    l(s) &<& l(\beta^{\sm{(1)}}) + l(\alpha^{\sm{(2)}}) \\
    &<&  l(\beta^{\sm{(1)}}) + l(\beta^{\sm{(2)}}) +   l(\gamma^{\sm{(2)}}) \\
    &=&   l(\beta^{\sm{(1)}}) + l(\beta^{\sm{(2)}}) +   l(\gamma_2') +  l(\alpha^{\sm{(3)}}) \\
    &<& \ldots\\
    &<& l(\beta^{\sm{(1)}}) + \ldots + l(\beta^{\sm{(k_{\max})}}) +   l(\gamma_2') +  \ldots + l(\gamma_{k_{\max}-1}') + l(\gamma^{\sm{(k_{\max})}}).
  \end{array}\]
  The claimed inequality now follows from the identities \[\gamma_2 = \beta^{\sm{(1)}} \cdot \ldots \cdot \beta^{\sm{(k_{\max})}} \mbox{ and }  \delta = \gamma_1' \cdot \ldots \cdot \gamma_{k_{\max}-1}' \cdot \gamma^{\sm{(k_{\max})}}.\]
  Suppose now that in addition $c$ is a systole in the graph $\Gamma$.  Since  the (possibly trivial) closed reduced edge-path $\tilde{c}$ in $\Gamma$ that we constructed in  Corollary~\ref{ShorteningLemma} is shorter, it must be trivial.  Recall that $\tilde{c}$ is the reduction of $c'  = dc_3\ldots c_n$, where both paths $d$ and $c_3\ldots c_n$ are reduced edge-paths. Hence $\tilde{c}$ is trivial implies that $d^{-} = c_3\ldots c_n$. In this case we obtain for the original closed reduced edge-path $c$:
  \[c = c_1c_2c_3\ldots c_n = c_1c_2d^{-}. \]
\end{proof}

\begin{rem}\label{kmax1}
  In Remark~\ref{AddShorter} we require that $k_{\max} \geq 2$. What happens in the case that $k_{\max} = 1$? In this case we have that $x_1 = v_3$ and  $\delta = \gamma^{\sm{(1)}}$ is a single saddle connection which we denote by $c^{\sm{(1)}}$. The edge-path $\tilde{c}$ constructed in Corollary~\ref{ShorteningLemma} is now the reduction of $c' = c^{\sm{(1)}}c_3\ldots c_n$. If $c$ is a systole in the graph $\Gamma$, then the shorter edge-path $\tilde{c}$ must be trivial. Hence we obtain that $n = 3$ and $c^{\sm{(1)}} = c_3^{-}$ and the original edge-path $c$ is just a triangle $c = c_1c_2c_3$ of combinatorial length 3. 
\end{rem}

\begin{proof}[Proof of Theorem~\ref{n3thmN}]
  Let $c$ be a systole in the graph $\Gamma$, $n$ its combinatorial length with $n \neq 3$ and let $\gamma$ be the realization of $c$.
  We have to show that $\gamma$ is geodesic. Then it is by Remark~\ref{prop:nullhomotopicSystoles} a systole of the surface.\\
   Let us first treat the cases $n=1$ and $n = 2$. If $n = 1$, then $\gamma = \gamma_1$ is a single saddle connection and thus geodesic.
For the case $n = 2$, suppose that $\gamma = \gamma_1 \cdot \gamma_2$ is not geodesic. By Lemma~\ref{LemmaDankwart} we may assume that one of the two angles between $\gamma_1$ and $\gamma_2$ is smaller than $\pi$. Let $v$ be the start point of $\gamma_1$ which is equal to the end point of $\gamma_2$. It follows from Lemma~\ref{The2Case}  that there is a non-trivial concatenation $\delta$ of saddle connections from $v$ to $v$ which is shorter than $\gamma$. This is a contradiction to the fact that $c$ is a systole in the graph of saddle connections.\\
We now consider the case that $n \geq 4$. Suppose again that $\gamma$ is not geodesic. We are now in the situation of Corollary~\ref{ShorteningLemma}. By Remark~\ref{kmax1} it follows from the fact that the combinatorial length $n$ is greater or equal to $4$ that $k_{\max} \geq 2$. Then Remark~\ref{AddShorter} implies that $c = c_1c_2d^{-}$. We denote by $c_1'$ and $\hat{s}$ the edges in the graph $\Gamma$ corresponding to the saddle connections $\gamma_1'$ and $s$ and obtain for $c'' = c_1sc_1'$ that $l(c'') < l(\gamma_1) + l(\gamma_2) + l(\delta) = l(c)$. We know that $c''$ is a non-trivial reduced closed edge path in $\Gamma$.  Hence $c$ was not a systole in $\Gamma$ which is a contradiction.\\
If $c$ is of combinatorial length  $n=3$ the statement follows from Lemma~\ref{LemmaDankwart} and  Remark~\ref{prop:nullhomotopicSystoles}.

\end{proof}

As an immediate consequence of Theorem~\ref{n3thmN} we obtain that in the stratum $\mathcal{H}(1,1)$ of translation surfaces with two singularities of order 1 it is sufficient to study the systoles of the  graph $\Gamma$ of saddle connections in order to obtain the systolic ratio of a surface.

\begin{cor}\label{h11}
  Suppose that $S$ is a translation surface in a stratum with at most two singularities and that $\Gamma$ is its graph of saddle connections. Then  for every systole  $c$ in the graph $\Gamma$ its realization $\gamma$ is a systole on the surface. This holds in particular for the stratum $\mathcal{H}(1,1)$.
\end{cor}

\begin{proof}
 The surface $S$ has at most  two singularities. Hence $\Gamma$ has at most two vertices. Thus  a systole $c$ in $\Gamma$ has at most combinatorial length 2. Now the claim follows from Theorem~\ref{n3thmN}.
\end{proof}

\section{Systole lengths of origami surfaces}\label{section:algorithm}

A crucial role in the understanding of translation surfaces is played by \textit{origamis} or \textit{square-tiled surfaces} (cf. e.g. \cite{dgzz}, \cite{gm}, \cite{hs}, \cite{HL}, \cite{mmy}, \cite{myz}). We briefly summarize some relevant facts about origamis which are needed for this article. You can find a more elaborate introduction in \cite{W-Sch}.
Origamis are translation surfaces defined by gluing together finitely many copies of Euclidean unit squares
along the upper and lower or the right and left edges. An origami $O$ is the collection of the corresponding gluing data.
It is entirely determined by two permutations
$\sigma_a$ and $\sigma_b$ in $S_d$, where $d$ is the number of squares, $\sigma_a$ describes the horizontal gluings and $\sigma_b$
describes the vertical gluings. More precisely we label the squares by $1$, \ldots, $d$. The right edge of the square with label $i$ is glued
by a translation with the left edge of the square with label $\sigma_a(i)$. Similarly, the upper edge of the square with label $i$ is glued by a translation with the lower edge of the
square with label $\sigma_b(i)$. Renumbering the squares leads to simultaneous conjugation of the pair $(\sigma_a,\sigma_b)$. The resulting surface $S$ is then tessellated by squares and naturally carries a translation structure
(see \textit{Figure \ref{fig:ori_graph}}).
A more detailed explanation of how to
describe origamis in different combinatorial ways  can be found in Section 2 of \cite{sci1}.
The group $\SL(2,\ZZ)$ acts on the set of origamis: For $A \in \SL(2,\ZZ)$ and an origami $O$ we apply the affine map $z \mapsto Az$ to each square of $O$ (see \cite{We}, Section 2.2). We denote the corresponding translation surface by $A\cdot S$. The surface $A\cdot S$ is again an origami also denoted by $A\cdot O$, and we can determine the gluing permutations $\tilde{\sigma}_a$ and $\tilde{\sigma}_b$ of $A\cdot S$ in terms of $A$, $\sigma_a$ and $\sigma_b$ (cf. \cite{sw}, Section 2). Observe furthermore  that the geodesics in a direction $v$ on $S$ become geodesics in the direction $A\cdot v$ on $A\cdot S$.
In Lemma~\ref{sings} below we carry out the action of a system of generators of $\SL(2,\ZZ)$ fitted to the purposes of this article.\\ 

\begin{figure}[h!]
\AffixLabels{%
\centerline{%
  \begin{xy}
<0.8cm,0cm>:
(0,0)*{\OriSquare{1}{}{}{}{}};
(1,0)*{\OriSquare{2}{}{}{}{}};
(2,0)*{\OriSquare{3}{}{}{}{}};
(3,0)*{\OriSquare{4}{}{}{}{b}};
(4,0)*{\OriSquare{5}{}{}{}{c}};
(5,0)*{\OriSquare{6}{}{}{}{a}};
(0,1)*{\OriSquare{7}{}{}{}{}};
(1,1)*{\OriSquare{8}{}{}{}{}};
(2,1)*{\OriSquare{9}{}{}{}{}};
(3,1)*{\OriSquare{10}{}{a}{}{}};
(4,1)*{\OriSquare{11}{}{b}{}{}};
(5,1)*{\OriSquare{12}{}{c}{}{}};
(0,2)*{\OriSquare{13}{}{}{}{}};
(1,2)*{\OriSquare{14}{}{}{}{}};
(2,2)*{\OriSquare{15}{}{}{}{}};
(-0.5,-0.47)*{{\bullet}};
(5.5,-0.47)*{{\bullet}};
(3.5,1.53)*{{\bullet}};
(2.5,-0.47)*{\bullet};
(2.5,2.53)*{\bullet};
(-.5,2.53)*{\bullet};
(4.5,-.47)*{\circ};
(5.5,1.53)*{\circ};
(-.5,1.53)*{{\circ}};
(2.5,1.53)*{{\circ}};
\end{xy} }}
\caption{A surface $S$ defined by an origami $O$. Edges with same labels are glued.
Each edge without label is glued to the one which lies opposite to it. $S$ has two singularities $\bullet$ and $\circ$.}
\label{fig:ori_graph}
\end{figure}

It is a crucial property of origamis that the corresponding translation surfaces lie dense in the strata. Thus we may use origamis  to determine the supremum of the lengths of systoles of all translation surfaces in a stratum.  The combinatorial definition of origamis makes it easier to compute the length of their systoles explicitly. In this section we present an algorithm which determines for origamis a finite subgraph of the graph of saddle connections (cf. Definition \ref{fgos}) which contains the systoles of the full graph   (cf. Algorithm I and II). The length of a systole of this finite graph can then be computed by classical methods of graph theory.
Recall from Corollary~\ref{h11} that for translation surfaces $S$ in the stratum $\mathcal{H}(1,1)$ the systoles of the graph of saddle connections correspond to the systoles of $S$. We use this fact to compute the length of the systoles of all origamis in $\mathcal{H}(1,1)$ with up to 67 squares. In the end, we construct an explicit origami of genus $g = 3$  which shows that in other strata the length of the systoles of a translation surface can be bigger than the length of the systoles of its graph of  saddle connections. \\

Suppose from now on that $S$ is a translation surface coming from an origami $O$ consisting of $d$ squares given by two permutations $\sigma_a$ and $\sigma_b$ in $S_d$. Let  furthermore $\sy(S)$ be the length of the systoles of $S$, let $p_1 ,\ldots, p_m$ be its singularities and let $\Gamma$ be its graph of saddle connections. Observe that the developing vectors of saddle connections of an origami are integer vectors. Since  we consider  $\Gamma$ as an undirected graph, we may restrict to the primitive directions that lie in the closed upper half plane, i.e. the upper half plane including the positive $x$-axis:
\begin{equation}\label{A1}
  A_1 = \{{x\choose y} \in \ZZ^2|\; {x\choose y} \mbox{ is primitive }, y > 0 \mbox{ or } ( y = 0 \mbox{ and } x > 0)\} \,.
\end{equation}
The graph $\Gamma$ of saddle connections is infinite, however we define in Definition~\ref{fgos} several finite variants of  it which we will use to compute the systole of $\Gamma$.

\begin{defi}\label{fgos}
  We define the following subgraphs of the graph $\Gamma$ of saddle connections:
  \begin{enumerate}
  \item[i)] For a finite set $S \subseteq \ZZ^2$ let $\Gamma_S$ be the subgraph of $\Gamma$ which contains all edges of $\Gamma$ corresponding to saddle connections whose directions lie in $S$.
  \item[ii)] We consider the following special cases of i):
    \begin{itemize}
    \item For  $v \in \ZZ^2\backslash \{{0 \choose 0} \}$ we define $\Gamma_v = \Gamma_S$ with $S = \{v\}$.
    \item For $l > 0$ we define $\Gamma_{S_l} = \Gamma_S$ with $S = S_l = \{{x \choose y} \in A_1|\; x^2 + y^2 \leq l\}$.
    \end{itemize}
  \item[iii)] For $r > 0$ let $\Gamma_{\leq r}$ be the subgraph of $\Gamma$ which contains the edges of $\Gamma$ corresponding to saddle connections of length $\leq r$.
  \end{enumerate}
\end{defi}

Observe in particular that if $r \geq \mbox{sys}(S)$ then $\Gamma_{\leq r}$ and $\Gamma_{S_{r^2}}$  have the same systoles as $\Gamma$. Hence we can just compute their systoles in order to determine the length of the systoles of $\Gamma$.

\begin{ex}
  Let $S$ be the origami surface shown in \textit{Figure~\ref{fig:ori_graph}}. From this picture one can
  directly read off the two graphs $\Gamma_v$ of saddle connections for $v = {1 \choose 0}$ and $v = {0\choose 1}$ shown in Figure~\ref{graphs1}.
  \begin{center}
  \begin{figure}[h!]
    \AffixLabels{
          \begin{tikzpicture}[scale=1,<->,>=stealth',shorten >=1pt,auto,node distance=3cm,thick]
            \tikzstyle{place}=[circle,thick,draw=black!,minimum size=6mm]
            \tikzstyle{bullet}=[circle,thick,fill=black!,minimum size=6mm]
            \vertex[bullet](p1) at (0,0) {};
            \vertex[place](p2) at (3,0) {};
            \path[->] (p1) edge node {2} (p2);
            \path[->] (p2)  edge   [bend left] node {1} (p1);
            \Loop[dist=2cm,dir=WE,label=$3$,labelstyle=left](p1);  
            \Loop[dist=2cm,dir=EA,label=$3$,labelstyle=right](p2);  
          \end{tikzpicture}\hspace*{15mm}
          \begin{tikzpicture}[scale=1,<->,>=stealth',shorten >=1pt,auto,node distance=3cm,thick]
            \tikzstyle{place}=[circle,thick,draw=black!,minimum size=6mm]
            \tikzstyle{bullet}=[circle,thick,fill=black!,minimum size=6mm]
            \vertex[bullet](p1) at (0,0) {};
            \vertex[place](p2) at (3,0) {};
            \path[->] (p1) edge [bend left = 60] node {2} (p2);
            \path[->] (p2)  edge   [bend left = 80] node {4} (p1);
            \path[->] (p1) edge  [bend right = 40] node {2} (p2);
            \path[->] (p2)  edge   [bend right] node {1} (p1);
          \end{tikzpicture}
    }
    \caption{
      Left side: the graph $\Gamma_{v_0}$  with $v_0 = {1 \choose 0}$,\quad  Right side: the graph $\Gamma_{v_1}$  with $v_1 = {0 \choose 1}$\\
      of the origami $O$ from \textit{Figure~\ref{fig:ori_graph}.}
    }\label{graphs1}
    \end{figure}
          \end{center}
In particular, we have a closed geodesic of length 2 as concatenation of
          a vertical and a horizontal saddle connection each of length $1$. This
          gives us an upper bound $r_0 = 2$ for the length of the systole
          and the set
          \[S_{l_0} = \{{1\choose 0}, {1\choose 1}, {0\choose 1},{-1 \choose 1}\} \mbox{ with } l_0 = r_0^2 = 4.\]
          For $v_2 = {1\choose 1}$ and $v_3 = {-1 \choose 1}$ we obtain the two graphs $\Gamma_v$ of saddle connections shown in Figure~\ref{g2}.
          \begin{center}
            \begin{figure}[h!]
              \AffixLabels{
                \begin{tikzpicture}[scale=1,<->,>=stealth',shorten >=1pt,auto,node distance=3cm,thick]
                  \tikzstyle{place}=[circle,thick,draw=black!,minimum size=6mm]
                  \tikzstyle{bullet}=[circle,thick,fill=black!,minimum size=6mm]
                  \vertex[bullet](p1) at (0,0) {};
                  \vertex[place](p2) at (3,0) {};
                  \path[->] (p1) edge node {$4\sqrt{2}$} (p2);
                  \path[->] (p2)  edge   [bend left] node {$5\sqrt{2}$} (p1);
                  \Loop[dist=2cm,dir=WE,label=$3\sqrt{2}$,labelstyle=right](p1);  
                  \Loop[dist=2cm,dir=EA,label=$3\sqrt{2}$,labelstyle=left](p2);  
                \end{tikzpicture}\hspace*{10mm}
                \begin{tikzpicture}[scale=1,<->,>=stealth',shorten >=1pt,auto,node distance=3cm,thick]
                  \tikzstyle{place}=[circle,thick,draw=black!,minimum size=6mm]
                  \tikzstyle{bullet}=[circle,thick,fill=black!,minimum size=6mm]
                  \vertex[bullet](p1) at (0,0) {};
                  \vertex[place](p2) at (1.5,0) {};
                  \Loop[dist=2cm,dir=NOWE,label=$3\sqrt{2}\,$,labelstyle=left](p1);  
                  \Loop[dist=2cm,dir=NOEA,label=$\,2\sqrt{2}$,labelstyle=right](p2);
                  \Loop[dist=2cm,dir=SOWE,label=$2\sqrt{2}\,\,$,labelstyle=left](p1);  
                  \Loop[dist=2cm,dir=SOEA,label=$3\sqrt{2}$,labelstyle=right](p2);                    
                \end{tikzpicture}
                }
                \caption{
                   Left side: the graph $\Gamma_{v_2}$  with $v_2 = {1 \choose 1}$,\quad  Right side: the graph $\Gamma_{v_3}$  with $v_3 = {-1 \choose 1}$\\
                   of the origami $O$ from \textit{Figure~\ref{fig:ori_graph}.
                   }\label{g2}
                 }
            \end{figure}
          \end{center}
          Hence $\Gamma_{S_{l_0}}$ becomes the undirected graph shown in Figure~\ref{g3}.
          \begin{center}
            \begin{figure}[h!]\label{graphs2}
              \AffixLabels{
                \begin{tikzpicture}[scale=1,-,>=stealth',shorten >=1pt,auto,node distance=3cm,thick, every loop/.style={}]]
              \tikzstyle{place}=[circle,thick,draw=black!,minimum size=6mm]
              \tikzstyle{bullet}=[circle,thick,fill=black!,minimum size=6mm]
              \vertex[bullet](p1) at (0,0) {};
              \vertex[place](p2) at (6,0) {};
              \path[every node/.style={font=\sffamily\small}] (p1) edge [bend left = 80] node {$2$}  (p2);
              \path[every node/.style={font=\sffamily\small}] (p1) edge [bend left = 50] node {$1$} (p2);
              \path[every node/.style={font=\sffamily\small}] (p1) edge [bend left = 30] node {$2$} (p2);
              \path[every node/.style={font=\sffamily\small}] (p1) edge [bend left = 10] node {$1$} (p2);
              \path[every node/.style={font=\sffamily\small}] (p2) edge [bend left = 80] node[yshift=3.5pt] {$5\sqrt{2}$}  (p1);
              \path[every node/.style={font=\sffamily\small}] (p2) edge [bend left = 50] node[yshift=3.5pt]{$4\sqrt{2}$} (p1);
              \path[every node/.style={font=\sffamily\small}] (p2) edge [bend left = 30] node {$4$} (p1);
              \path[every node/.style={font=\sffamily\small}] (p2) edge [bend left = 10] node {$2$} (p1);
              \path[every node/.style={font=\sffamily\small}] (p1) edge[loop, out=185, in=95, looseness=0.8,distance=3.5cm]  node{$3$} (p1);
              \path[every node/.style={font=\sffamily\small}] (p1) edge[loop, out=170, in=110, looseness=0.8,distance=.8cm]  node{$3\sqrt{2}$} (p1);
              \path[every node/.style={font=\sffamily\small}] (p1) edge[loop, out=275, in=185, looseness=0.8,distance=3.5cm]  node{$3\sqrt{2}$} (p1);
              \path[every node/.style={font=\sffamily\small}] (p1) edge[loop, out=260, in=200, looseness=0.8,distance=.8cm]  node{$2\sqrt{2}$} (p1);
              \path[every node/.style={font=\sffamily\small}] (p2) edge[loop, out=95, in=0, looseness=0.8,distance=3.5cm]  node{$3$} (p2);
              \path[every node/.style={font=\sffamily\small}] (p2) edge[loop, out=80, in=20, looseness=0.8,distance=.8cm]  node{$3\sqrt{2}$} (p2);
              \path[every node/.style={font=\sffamily\small}] (p2) edge[loop, out=365, in=275, looseness=0.8,distance=3.5cm]  node{$3\sqrt{2}$} (p2);
              \path[every node/.style={font=\sffamily\small}] (p2) edge[loop, out=350, in=290, looseness=0.8,distance=0.8cm]  node{$2\sqrt{2}$} (p2);
                \end{tikzpicture}\hspace*{10mm}
                 }
                \caption{
                   The graph $\Gamma_{S_{l_0}}$ with $l_0 = 4$ of the origami $O$ from \textit{Figure~\ref{fig:ori_graph}.}
                 }\label{g3}
            \end{figure}
          \end{center} 
        The length of a smallest closed reduced path in $\Gamma_{S_{l_0}}$ is $2$, hence the length of the systole of the graph $\Gamma$ of all saddle connections  is equal to $2$. In particular,
        we observe that in this example there is no regular closed geodesic on the translation surface which realizes the length of the systoles.
          \end{ex}

From the considerations above we obtain the following ansatz for the computation of the length of the systoles of the graph $\Gamma$ of saddle connections:
\begin{itemize}
\item
  We find an upper bound $r$  for the length of the systoles of $\Gamma$. We take here the minimum of the lengths of all regular horizontal and vertical closed geodesics on $S$.
\item
  For each $v$ in $S_{r^2}$  we compute the graph $\Gamma_v$ of saddle connections in directions $v$.
\item
  We obtain the graph $\Gamma_{S_{r^2}}$ as the union of all $\Gamma_v$ with $v \in S_{r^2}$.
\end{itemize}
The systoles of the finite subgraph $\Gamma_{S_{r^2}}$ of $\Gamma$ then equal the systoles of $\Gamma$.\\

We record in Algorithm II below how to build  $\Gamma_v$ for a given $v \in A_1$. Using this we obtain from the ansatz described above  Algorithm I for the computation of the systoles of the graph $\Gamma$ of saddle connections.\\

\begin{tcolorbox}
  \textbf{Algorithm I \textit{Computation of the graph \boldmath{$\Gamma_{S}$} of an origami \boldmath{$O$}}}\\[3mm]
  Suppose that the origami $O$ is given by the pair of permutations $(\sigma_a,\sigma_b)$.
  \begin{enumerate}[label=\protect\circled{\arabic*}]
  \item
    Compute the minimal length $r_0$ of all horizontal and vertical regular closed geodesics as follows:
    \[ r_0 = \min\{\mbox{lengths of cycles in the permutation } \sigma_a \mbox{ and } \sigma_b\}\]
    It follows that $\sy(S) \leq r_0$.
  \item
    Let $l_0 := r_0^2$ and $S := S_{l_0}$ (cf. Definition \ref{fgos}).\\  
    For each $v \in S$ calculate the weighted graph $\Gamma_v$ 
    of saddle connections in direction $v$ (see Algorithm II). Hence each $\Gamma_v$ is a graph with vertices labeled
    by the singularities $p_1$, \ldots, $p_m$ of the origami.
  \item
    Compute the graph $\Gamma_S$ as union of the graphs $\Gamma_v$ over $v \in S$.
    More precisely: $\Gamma_S$ again has $m$ vertices labeled by $p_1$, \ldots, $p_m$.
    For all $v \in S$ and for all edges $e$
    between $p_i$ and $p_j$  in $\Gamma_v$ include an edge between $p_i$ and $p_j$ in $\Gamma_S$ with the 
    same weight that the edge $e$ has in $\Gamma_v$. 
  \end{enumerate}
  For each shortest closed edge-path $c$ whose realization $\gamma$ on $S$ is not null homotopic, $\gamma$ is then a systole of the origami.
\end{tcolorbox}

\begin{note}
  You can improve Algorithm I by taking $r_0$ as the length of a shortest closed
  reduced path in the graph $\Gamma_v$ with $v \in \{{1 \choose 0}, {0 \choose 1}\}$ or any other upper bound for
  the length of the systole.
\end{note}

It remains to describe, how to compute  $\Gamma_v$ for a given $v \in A_1$ (see Algorithm II).\\
Firstly, we consider the case $v_0 = {1 \choose 0}$. In this case we obtain  $\Gamma_{v_0}$ directly from the permutations $(\sigma_a,\sigma_b)$ of the origami $O$, as follows.  Recall that the singularities of an origami $O$ given by the pair of permutations ($\sigma_a$, $\sigma_b$) are in one-to-one correspondence with the cycles  of the commutator $\sigma_c = [\sigma_a,\sigma_b] = \sigma_a\circ \sigma_b \circ \sigma_a^{-1} \circ \sigma_b^{-1}$. More precisely, for each cycle $(i_1, \ldots, i_k)$ of $\sigma_c$ we have that the left lower corner of the squares labeled by $i_1,\ldots, i_k$ belong to the same singularity. For $i$ we denote the cycle which contains $i$ as well as the corresponding singularity by $[i]$. We make a list $L$ of all squares whose left lower corner is a singularity, namely all $i$ such that $\sigma_c(i) \neq i$. For each element $i$ in $L$ we choose $k$ minimal such that $\sigma_a^k(i)$ is in $L$. This gives us a horizontal saddle connection in $\Gamma_{v_0}$ of length $k$ starting in the singularity corresponding to the cycle $[i]$.\\
Now, we turn to the graph $\Gamma_v$ for an arbitrary element $v$ in $A_1$. Since $v$ is primitive in $\ZZ^2$ we may choose some $A \in \SL(2,\ZZ)$ with $A\cdot v = v_0$. We consider the origami $O_A = A\cdot O$ and compute its permutations $(\sigma'_a$, $\sigma'_b)$. We further use that every saddle connection in direction $v$ of length $l$ of the origami $O$ becomes a saddle connection of direction $A\cdot v = v_0$ of the origami $O_A$ of length $l_A = l \cdot ||v||$. Hence the desired graph $\Gamma_v$ for the origami $O$ equals the graph $\Gamma_{v_0}$ for the origami $A\cdot O$. Only the weights which represent the lengths of the saddle connections  have to be adjusted: We have to multiply the weights in $\Gamma_{v_0}$ (of $A\cdot O$) by $\frac{||v||}{||A\cdot v||}$ to obtain the weights in $\Gamma_v$ (of $O$).\\
As last step, we have to identify which vertex of the graph $\Gamma_v$ corresponds to which singularity. More precisely, we have to identify the singularities of $O_A = A\cdot O$ with the singularities of $O$. The identification is described in Lemma~\ref{sings} for the following generators of $\SL(2,\ZZ)$ and their inverses:
\[T = \begin{pmatrix} 1&1\\0&1 \end{pmatrix}, R = \begin{pmatrix} 0 & -1\\ 1 & 0 \end{pmatrix}, T^{-1} = \begin{pmatrix} 1&-1\\0&1 \end{pmatrix} \mbox{ and } R^{-1} = \begin{pmatrix} 0 & 1\\ -1 & 0 \end{pmatrix}\]
A general matrix $A \in \SL(2,\ZZ)$ can be written as product of these four matrices and we can iterate the identifications in Lemma~\ref{sings} in order to identify the singularities of $O$ and $A\cdot O$.

\begin{lem}\label{sings}
  Let $O$ be an origami given by the pair of permutations $(\sigma_a,\sigma_b)$. For $A \in \{T,R,T^{-1}, R^{-1}\}$ the following holds.
  \begin{enumerate}
  \item[i)]
    We can enumerate the squares forming  $A\cdot O$ such that $A\cdot O$ is given by the pair of permutations:
    \[
    \begin{array}{ll}
      (\sigma_a,\sigma_b\sigma_a^{-1}) &\mbox{, if } A = T\\
      (\sigma_b^{-1},\sigma_a) &\mbox{, if } A = R\\
      (\sigma_a,\sigma_b\sigma_a) &\mbox{, if } A = T^{-1} \mbox{ and }\\
       (\sigma_b,\sigma_a^{-1}) &\mbox{, if } A = R^{-1}
    \end{array}
    \]
  \item[ii)]
    For the translation surfaces $S$ and $A\cdot S$ defined by the origamis $O$ and $A\cdot O$ we have:
    There is a natural homeomorphism $f_A$ between $S$ and $A\cdot S$ which maps geodesics in direction $v \in \RR^2$ to geodesics in direction $A\cdot v$. With respect to the enumeration of the squares forming $O$ and $A\cdot O$ fixed  in i) we have:
    The singularity $[i]$ on $S$ corresponding to the left lower corner of the square labeled by $i$ is mapped to the  singularity $[j]$ on $A\cdot S$ with:
    \[
    \begin{array}{ll}
      [j] = [i]                &\mbox{, if } A = T\\
      {[j]} = [{\sigma_b}^{-1}(i)]  &\mbox{, if } A = R\\
       {[j]} = {[i]}                &\mbox{, if } A = T^{-1} \mbox{ and }\\
      {[j]} = [\sigma_a^{-1}(i)]  &\mbox{, if } A = R^{-1}
    \end{array}
    \]
  \end{enumerate}
\end{lem}

\begin{center}
            \begin{figure}[h!]
              \AffixLabels{
                \begin{tikzpicture}[scale=1.1,-,>=stealth',shorten >=1pt,auto,node distance=3cm,thick]
                  \tikzstyle{place}=[circle,thick,draw=black!,minimum size=6mm]
                  \tikzstyle{bullet}=[circle,thick,fill=black!,minimum size=1mm]
                  \draw  (-1,0) -- (0,0) -- (1,0) -- (2,0);
                  \draw  (-1,1) -- (0,1) -- (1,1) -- (2,1);
                  \draw (-1,2) -- (0,2);
                  \draw (-1,0) -- (-1,1); \draw (0,0) -- (0,1); \draw (1,0) -- (1,1);
                  \draw (-1,1) -- (-1,2); \draw (0,1) -- (0,2);
                  \draw (2,0) -- (2,1);
                  \node at (.5,.5) (y1) {${\sm i}$};
                  \node at (-.5,.5) (y2){${\sm \sigma_a^{-1}(i)}$};
                  \node at (-.5,1.5) (y3){${\sm \sigma_b(\sigma_a^{-1}(i))}$};
                   \node at (1.5,.5) (y4) {${\sm \sigma_a(i)}$};
                  \draw[->] (2.5,1) -- (4,1) node[midway, above] {$f_T$};;
                  \vertex[bullet, label = below:$s$]() at (0,0) {};
                \end{tikzpicture}\hspace*{1mm}
                \begin{tikzpicture}[scale=1.2,-,>=stealth',shorten >=1pt,auto,node distance=3cm,thick]
                  \tikzstyle{place}=[circle,thick,draw=black!,minimum size=6mm]
                  \tikzstyle{bullet}=[circle,thick,fill=black!,minimum size=1mm]
                  \draw  (-1,0) -- (0,0) -- (1,0) -- (2,0);
                  \draw  (0,1) -- (1,1) -- (2,1) -- (3,1);
                  \draw (1,2) -- (2,2);
                  \draw (-1,0) -- (0,1); \draw (0,0) -- (1,1); \draw (1,0) -- (2,1);
                  \draw (0,1) -- (1,2); \draw (1,1) -- (2,2);
                  \draw (2,0) -- (3,1);
                  \node at (1,.5) (y1) {${\sm i}$};
                  \node at (0,.5) (y2){${\sm \sigma_a^{-1}(i)}$};
                  \node at (.9,1.4) (y3){${\sm \sigma_b(\sigma_a^{-1}(i))}$};
                   \node at (2,.5) (y4){${\sm \sigma_a(i)}$};
                   \vertex[bullet, label = below:$f_T(s)$]() at (0,0) {};
                  \node at (3.2,.5) {${=\joinrel=}$};
                \end{tikzpicture}\hspace*{-1mm}
                \begin{tikzpicture}[scale=1.2,-,>=stealth',shorten >=1pt,auto,node distance=3cm,thick]
                  \tikzstyle{place}=[circle,thick,draw=black!,minimum size=6mm]
                  \tikzstyle{bullet}=[circle,thick,fill=black!,minimum size=1mm]
                  \draw  (-1,0) -- (0,0) -- (1,0) -- (2,0);
                  \draw  (0,1) -- (1,1) -- (2,1) -- (3,1);
                  \draw (1,2) -- (2,2);
                  \draw (-1,0) -- (0,1); \draw (0,0) -- (1,1); \draw (1,0) -- (2,1);
                  \draw (0,1) -- (1,2); \draw (1,1) -- (2,2);
                  \draw (2,0) -- (3,1);
                  \draw[dashed] (0,0) -- (0,1); \draw[dashed] (1,0) -- (1,1); \draw[dashed] (1,1) -- (1,2); \draw[dashed] (2,0) -- (2,1);
                  \node at (.5,.2) (y1) {${\sm i}$};
                  \node at (-.5,.2) (y2){${\sm \sigma_a^{-1}(i)}$};
                  \node at (-.4,1.4) (y3){${\sm \sigma_b(\sigma_a^{-1}(i))}$};
                  \node at (1.6,.2) (y4){${\sm \sigma_a(i)}$};
                  \draw [->, bend left](.1,1.4) to (.6,1.1);
                   \vertex[bullet, label = below:$f_T(s)$]() at (0,0) {};
                \end{tikzpicture}                
              }
              \caption{Local picture of the application of $T$ to an origami}\label{happyShear}
            \end{figure}
\end{center}
\begin{proof}
  Figure~\ref{happyShear} shows the application of the matrix $T$. More precisely, it shows the three squares labeled by $i$, $\sigma_a^{-1}(i)$ and ${\sigma_b}^{-1}(\sigma_a^{-1}(i))$ on the surface $S$  and their images on the surface $T\cdot S$. The surface $T\cdot S$ is tiled by parallelograms. The labeling of the squares on $S$ carries over to a labeling of the parallelograms on $T\cdot S$. We consider the map $f_T: S \to T\cdot S$ which maps each square on $S$ to the corresponding parallelogram on $T\cdot S$. Now, we explicitly obtain a square tiling of $T \cdot S$ using the horizontal edges of the parallelograms and adding as vertical edges the diagonals of the parallelograms as shown in Figure~\ref{happyShear} on the right side. We label each square with the number of the parallelogram which intersects its right lower part. In this way we obtain the permutations stated in i). The vertex $s$ which is the left lower vertex of the square on $S$ labeled with $i$ is mapped on $T\cdot S$ to the left lower vertex of the square labeled again with $i$. This proves the statement of ii) with regard to the action of $T$. The other three cases follow in a similar way as  shown in Figure~\ref{ApplyS} and \ref{ApplyTInverse}.
\end{proof}

\begin{center}
            \begin{figure}[h!]
              \AffixLabels{
                 \begin{minipage}[b]{3cm}
                \begin{tikzpicture}[scale=1.2,-,>=stealth',shorten >=1pt,auto,node distance=3cm,thick]
                  \tikzstyle{place}=[circle,thick,draw=black!,minimum size=6mm]
                  \tikzstyle{bullet}=[circle,thick,fill=black!,minimum size=1mm]
                  \draw  (0,0) -- (1,0) -- (2,0);
                  \draw  (0,-1) -- (1,-1);
                  \draw (0,1) -- (1,1) -- (2,1);
                  \draw (0,-1) -- (0,0); \draw (1,-1) -- (1,0);
                  \draw (0,0) -- (0,1); \draw (1,0) -- (1,1); \draw (2,0) -- (2,1);
                  \node at (.5,.5) (y1) {${\sm i}$};
                  \node at (.5,-.5) (y2){${\sm \sigma_b^{-1}(i)}$};
                  \node at (1.5,.5) (y3){${\sm \sigma_a(i)}$};
                  \draw[->] (2.5,0) -- (3.5,0) node[midway, above] {$f_R$};;
                  \vertex[bullet, label = left:${s}$]() at (0,0) {};
                \end{tikzpicture}
                \end{minipage}\hspace*{20mm}
                 \begin{minipage}[b]{3cm}
                \begin{tikzpicture}[scale=1.2,-,>=stealth',shorten >=1pt,auto,node distance=3cm,thick]
                  \tikzstyle{place}=[circle,thick,draw=black!,minimum size=6mm]
                  \tikzstyle{bullet}=[circle,thick,fill=black!,minimum size=1mm]
                  \draw  (0,0) -- (1,0) -- (2,0);
                  \draw  (0,2) -- (1,2);
                  \draw (0,1) -- (1,1) -- (2,1);
                  \draw (0,1) -- (0,2); \draw (1,1) -- (1,2);
                  \draw (0,0) -- (0,1); \draw (1,0) -- (1,1); \draw (2,0) -- (2,1);
                  \node at (.5,.5) (y1) {${\sm i}$};
                  \node at (1.5,.5) (y2){${\sm \sigma_b^{-1}(i)}$};
                  \node at (.5,1.5) (y3){${\sm \sigma_a(i)}$};
                  \vertex[bullet, label = below:${f_{R}(s)}$]() at (1,0) {};
                \end{tikzpicture}
                 \end{minipage}\hspace*{10mm}
                \begin{minipage}[b]{3cm}
                 \begin{tikzpicture}[scale=1.2,-,>=stealth',shorten >=1pt,auto,node distance=3cm,thick]
                  \tikzstyle{place}=[circle,thick,draw=black!,minimum size=6mm]
                  \tikzstyle{bullet}=[circle,thick,fill=black!,minimum size=1mm]
                  \draw  (0,0) -- (1,0) -- (2,0);
                  \draw  (1,2) -- (2,2);
                  \draw (0,1) -- (1,1) -- (2,1);
                  \draw (1,1) -- (1,2); \draw (2,1) -- (2,2);
                  \draw (0,0) -- (0,1); \draw (1,0) -- (1,1); \draw (2,0) -- (2,1);
                  \node at (.5,.5) (y1) {${\sm \sigma_a^{-1}(i)}$};
                  \node at (1.5,1.5) (y2){${\sm \sigma_b(i)}$};
                  \node at (1.5,.5) (y3){${\sm i}$};
                  \draw[->] (2.5,1) -- (3.5,1) node[midway, above] {$f_{R^{-1}}$};;
                  \vertex[bullet, label = below:$s$]() at (1,0) {};
                 \end{tikzpicture}\hspace*{1.5mm}
                  \end{minipage}
                  \begin{minipage}[b]{3cm}
                \begin{tikzpicture}[scale=1.2,-,>=stealth',shorten >=1pt,auto,node distance=3cm,thick]
                  \tikzstyle{place}=[circle,thick,draw=black!,minimum size=6mm]
                  \tikzstyle{bullet}=[circle,thick,fill=black!,minimum size=1mm]
                  \draw  (0,0) -- (1,0) -- (2,0);
                  \draw  (0,2) -- (1,2);
                  \draw (0,1) -- (1,1) -- (2,1);
                  \draw (0,1) -- (0,2); \draw (1,1) -- (1,2);
                  \draw (0,0) -- (0,1); \draw (1,0) -- (1,1); \draw (2,0) -- (2,1);
                  \node at (.5,.5) (y1) {${\sm i}$};
                  \node at (1.5,.5) (y2){${\sm \sigma_b(i)}$};
                  \node at (.5,1.5) (y3){${\sm \sigma_a^{-1}(i)}$};
                  \vertex[bullet, label = below left:$f_{R^{-1}}(s)$]() at (0,1) {};
                \end{tikzpicture}
                 \end{minipage}
              }
              \caption{Local pictures of the application of $R$ and $R^{-1}$ to an origami}\label{ApplyS}
            \end{figure}
            \begin{figure}[h!]
              \AffixLabels{ 
                \begin{tikzpicture}[scale=1.2,-,>=stealth',shorten >=1pt,auto,node distance=3cm,thick]
                  \tikzstyle{place}=[circle,thick,draw=black!,minimum size=6mm]
                  \tikzstyle{bullet}=[circle,thick,fill=black!,minimum size=1mm]
                  \draw  (-1,0) -- (0,0) -- (1,0);
                  \draw  (-1,1) -- (0,1) -- (1,1);
                  \draw (0,2) -- (1,2);
                  \draw (-1,0) -- (-1,1); \draw (0,0) -- (0,1); \draw (1,0) -- (1,1);
                  \draw (0,1) -- (0,2); \draw (1,1) -- (1,2);
                  \node at (-.5,.5) (y1) {${\sm i}$};
                  \node at (.5,.5) (y2){${\sm \sigma_a(i)}$};
                  \node at (.5,1.5) (y3){${\sm \sigma_b(\sigma_a(i))}$};
                  \draw[->] (1.5,1) -- (2.5,1) node[midway, above] {$f_{T^{-1}}$};;
                  \vertex[bullet, label = below:$s$]() at (-1,0) {};
                 \end{tikzpicture}\hspace*{1mm}
                                 \begin{tikzpicture}[scale=1.2,-,>=stealth',shorten >=1pt,auto,node distance=3cm,thick]
                  \tikzstyle{place}=[circle,thick,draw=black!,minimum size=6mm]
                  \tikzstyle{bullet}=[circle,thick,fill=black!,minimum size=1mm]
                  \draw  (-1,0) -- (0,0) -- (1,0);
                  \draw  (-2,1) -- (-1,1) -- (0,1);
                  \draw (-2,2) -- (-1,2);
                  \draw (-1,0) -- (-2,1); \draw (0,0) -- (-1,1); \draw (1,0) -- (0,1);
                  \draw (-1,1) -- (-2,2); \draw (0,1) -- (-1,2);
                  \draw[dashed] (0,0) -- (0,1); \draw[dashed] (-1,0) -- (-1,1); \draw[dashed] (-1,1) -- (-1,2);
                  \node at (.5,.2) (y1) {${\sm \sigma_a(i)}$};
                  \node at (-.5,.2) (y2){${\sm i}$};
                  \node at (-.6,1.2) (y3){${\sm \sigma_b(\sigma_a(i))}$};
                   \vertex[bullet, label = below:$f_{T^{-1}}(s)$]() at (-1,0) {};
                \end{tikzpicture}              
              }
              \caption{Local picture of the application of $T^{-1}$ to an origami}\label{ApplyTInverse}
            \end{figure}
\end{center}

\begin{tcolorbox}
  \textbf{Algorithm II \textit{Computation of the graph \boldmath{$\Gamma_v$}}}\\[3mm]
  Suppose we are given: the origami $O$ by the pair of permutations $(\sigma_a,\sigma_b)$  and a direction $v \in A_1$.\\
  Recall that the singularities $p_1$, \ldots, $p_m$ of the origami $O$ are identified with the cycles $[i]$ of the commutator $[\sigma_a,\sigma_b] = \sigma_a\circ\sigma_b\circ\sigma_a^{-1}\circ\sigma_b^{-1}$.
\begin{enumerate}[label=\protect\circled{\arabic*}]
  \item
    Choose a matrix $A \in \SL_2(\Z)$ such that $A\cdot v = {1\choose 0}$.\\
    Write $A$ as product $A = A_1\cdot \ldots \cdot A_k$ with $A_i \in \{T,R,T^{-1}, R^{-1}\}$.\\
    Compute iteratively the pair of permutations $(\sigma'_a, \sigma'_b)$ for the origami $O_A = A\cdot O$ using Lemma~\ref{sings} i).\\
    Compute iteratively the bijection $\alpha:[i] \mapsto [\alpha(i)]$ between the singularities of $O$ and those of $O_A$ using Lemma~\ref{sings} ii).
  \item
    The vertices of $\Gamma_v$ are the cycles of the permutation $[\sigma_a',\sigma_b']$ of length $\geq 2$.\\ Recall that the cycle $[j]$ of $[\sigma_a',\sigma_b']$ corresponds to the cycle $[\alpha^{-1}(j)]$ of $[\sigma_a,\sigma_b]$. \\
    Label now the vertex $[j]$ with the singularity $p_k$ $(k \in \{1,\ldots,m\})$ on $O$ which corresponds to the cycle $[\alpha^{-1}(j)]$.
  \item
    Make a list $L$ of those squares of the 
    origami $O_A$ whose left lower corners are 
    singularities, i.e.  $L = \{i \in \{1 ,\ldots, n\}|\;\sigma_c'(i) \neq i\}$, where $\sigma_c' = [\sigma_a',\sigma_b']$ .
  \item
    For each $i \in L$ do:\\
    Let $k \geq 1$ be the 
    smallest integer number such that $j = ({\sigma'}_a)^k(i) \in L$.\\
    Put a directed edge labeled by $k \cdot \ell(v)$ from the singularity
    which corresponds to the cycle of $[\sigma_a',\sigma_b']$ containing $i$ to
    the singularity which corresponds to the cycle containing $j$.
\end{enumerate}
\end{tcolorbox}

\vspace*{5mm}

Recall from Corollary \ref{h11} that for surfaces $S$ in the stratum $\mathcal{H}(1,1)$ the length of the systoles of $S$ is indeed equal to the length of the systoles of the graph of saddle connections. Thus we can use Algorithm~I and Algorithm~II in order to study  the systoles of translations of surfaces of origamis in $\mathcal{H}(1,1)$. Using the programs from \cite{Co} we obtain \textit{Table}~\ref{H11}. The table shows for given $n$ the maximal length $\sy$ of a systole in the graph of saddle connections and the corresponding maximal systolic ratio $\sr = \sy^2/n$  that is achieved by an origami with $n$ squares in $\mathcal{H}(1,1)$. The minimal number of squares for an origami in this stratum is $4$. In particular, we obtain from \textit{Table}~\ref{H11} that among the origamis of degree $n \leq 67$ the maximal systolic ratio for closed paths in the graph of saddle connections is $\frac{17}{30} \sim 0.567$. This supports Conjecture 1.2 \cite{jp} of Judge and Parlier that the maximal systolic ratio in $\mathcal{H}(1,1)$ is $0.58404...$.\\

  \begin{table}
    \centering
    \begin{tabular}[t]{rcr}
      \hline
      $n$   &   $\sy$ & $\sr$  \\
      \hline
      $4$   & $\sqrt{2}$  & $0.5$             \\
      $5$   & $\sqrt{2}$  & $0.4$             \\
      $6$   & $\sqrt{2}$  & $\sim 0.33$       \\
      $7$   & $\sqrt{2}$  & $\sim 0.29$       \\
      $8$   & $2$         & $0.5$             \\
      $9$   & $2$         & $\sim 0.44$       \\
      $10$  & $2$         & $0.4$             \\
      $11$  & $\sqrt{5}$  & $\sim 0.45$       \\
      $12$  & $\sqrt{5}$  & $\sim 0.42$       \\
      $13$  & $\sqrt{5}$  & $\sim 0.38$       \\
      $14$  & $\sqrt{5}$  & $\sim 0.36$       \\
      $15$  & $\sqrt{5}$  & $\sim 0.33$       \\
      $16$  & $1 + \sqrt{2}$  & $\sim 0.36$   \\
       $17$  & $2\,\sqrt{2}$ & $\sim 0.47$     \\
      $18$  & $2\,\sqrt{2}$ & $\sim 0.44$     \\
      $19$  & $3$           & $\sim 0.47$     \\
      $20$  & $\sqrt{10}$   & $0.50$           \\
      $21$  & $\sqrt{10}$   & $\sim 0.48$     \\
      $22$  & $\sqrt{10}$   & $\sim 0.45$     \\
      $23$    & $\sqrt{10}$    & $\sim 0.43$  \\
      $24$    & $\sqrt{13}$    & $\sim 0.54$  \\
    \end{tabular}
    \hfil
    \begin{tabular}[t]{rcr}
      \hline
      $n$   & $\sy$      & $\sr$  \\
      \hline
      $25$    & $\sqrt{13}$    & $\sim 0.52$       \\
      $26$    & $\sqrt{13}$    & $0.50$        \\
      $27$    & $\sqrt{13}$    & $\sim 0.48$ \\
      $28$    & $\sqrt{13}$    & $\sim 0.46$ \\
      $29$     & $\sqrt{13}$    & $\sim 0.45$ \\
      $30$    & $\sqrt{17}$    & $\sim 0.57$ \\
      $31$    & $4$   & $\sim 0.52$ \\
      $32$    & $\sqrt{17}$    & $\sim 0.53$\\
         $33$    & $\sqrt{17}$    & $\sim 0.52$\\
      $34$    & $\sqrt{17}$    & $0.50$          \\
      $35$    & $\sqrt{17}$    & $\sim 0.49$ \\
      $36$    & $3\,\sqrt{2}$   & $0.50$          \\
      $37$    & $\sqrt{5} + 2$ & $\sim 0.48$ \\
      $38$    & $3\,\sqrt{2}$   & $\sim 0.47$ \\
      $39$    & $3\,\sqrt{2}$   & $\sim 0.46$ \\
      $40$    & $2\,\sqrt{5}$   & $0.5$          \\
      $41$    & $2\,\sqrt{5}$   & $\sim 0.49$ \\
      $42$    & $2\,\sqrt{5}$   & $\sim 0.48$ \\
      $43$     & $2\,\sqrt{5}$   & $\sim 0.47$\\
      $44$    & $2\,\sqrt{5}$   & $\sim 0.45$ \\
      $45$    & $2\,\sqrt{5}$   & $\sim 0.44$ \\
    \end{tabular}
    \hfil
    \begin{tabular}[t]{rcr}
      \hline
      $n$   & $\sy$      & $\sr$ \\
      \hline
      $46$    & $5$   & $\sim 0.54$ \\
      $47$    & $5$   & $\sim 0.53$ \\
      $48$    & $5$   & $\sim 0.52$ \\
            $49$    & $5$   & $\sim 0.51$ \\
      $50$  &  $\sqrt{26}$  &  $0.52$  \\
      $51$  &  $\sqrt{26}$  &  $\sim 0.51$  \\
      $52$  &  $\sqrt{26}$  &  $0.50$  \\
      $53$  &  $1 + \sqrt{17}$  &  $\sim 0.50$  \\
      $54$  &  $2 + \sqrt{10}$  &  $\sim 0.49$  \\
      $55$  &  $1 + 3\,\sqrt{2}$  &  $\sim 0.50$  \\
      $56$  &  $\sqrt{29}$  &  $\sim 0.52$  \\
      $57$  &  $\sqrt{29}$  &  $\sim 0.51$  \\
      $58$  &  $\sqrt{29}$  &  $\sim 0.50$  \\
      $59$  &  $4 + \sqrt{2}$ & $\sim 0.50$ \\
      $60$  &  $\sqrt{34}$  &  $\sim 0.57$  \\
      $61$  &  $1 + 2\,\sqrt{5}$  &  $\sim 0.49$  \\
      $62$  &  $4\,\sqrt{2}$  &  $\sim 0.52$  \\
      $63$  &  $\sqrt{34}$  &  $\sim 0.54$  \\
      $64$  &  $\sqrt{34}$  &  $\sim 0.53$  \\
      $65$  &  $\sqrt{34}$  &  $\sim 0.52$  \\
      $66$  &  $\sqrt{34}$  &  $\sim 0.52$  \\
      $67$  &  $\sqrt{10} + 2\,\sqrt{2}$  &  $\sim 0.54$  \\
    \end{tabular}\\[5mm] \hrule
    \begin{center}
      \caption{Maximal systolic ratios for origamis in $H(1,1)$ of given degree.}\label{H11}
      \end{center}
  \end{table}
\newpage
  
In Example~\ref{ExampleMax} we present an origami whose  systolic ratio is maximal among all origamis with less or equal than 67 squares.

\begin{ex}\label{ExampleMax}
  \textit{Figure~\ref{fig:maximalori}} shows an origami in $\mathcal{H}(1,1)$ with 30 squares that has systolic ratio $\frac{17}{30}$. This origami is given by the two permutations
  \[\begin{array}{lcl}
  \sigma_a &=& (1,2,3,4,5,6,7,8)(9,10,11,12,13,14,15,16,17,18,19,20,21,22,23,24,25,26,27,28,29,30),\\
  \sigma_b &=& (1,9,27,23,8,30,26,22,7,29,25,21,6,28,24,20,5,13,17,2,10,14,18,3,11,15,19,4,12,16).
  \end{array} 
  \]
  The dashed line is a systole of the surface.

\begin{figure}[h!]
\AffixLabels{%
\centerline{%
  \begin{xy}
<0.8cm,0cm>:
(0,3)*{\OriSquare{1}{}{a}{w}{}};
(1,3)*{\OriSquare{2}{}{}{}{}};
(2,3)*{\OriSquare{3}{}{}{}{}};
(3,3)*{\OriSquare{4}{}{}{}{}};
(4,3)*{\OriSquare{5}{}{}{}{}};
(5,3)*{\OriSquare{6}{}{f}{}{}};
(6,3)*{\OriSquare{7}{}{g}{}{}};
(7,3)*{\OriSquare{8}{w}{h}{}{}};
(7,0)*{\OriSquare{9}{z}{}{}{a}};
(1,4)*{\OriSquare{10}{}{}{z}{}};
(2,4)*{\OriSquare{11}{}{}{}{}};
(3,4)*{\OriSquare{12}{}{d}{}{}};
(4,4)*{\OriSquare{13}{v}{e}{}{}};
(1,5)*{\OriSquare{14}{}{b}{v}{}};
(2,5)*{\OriSquare{15}{u}{c}{}{}};
(0,2)*{\OriSquare{16}{}{}{u}{d}};
(1,2)*{\OriSquare{17}{}{}{}{e}};
(2,2)*{\OriSquare{18}{}{}{}{b}};
(3,2)*{\OriSquare{19}{}{}{}{c}};
(4,2)*{\OriSquare{20}{}{}{}{}};
(5,2)*{\OriSquare{21}{}{}{}{}};
(6,2)*{\OriSquare{22}{}{}{}{}};
(7,2)*{\OriSquare{23}{x}{}{}{}};
(4,1)*{\OriSquare{24}{}{}{x}{}};
(5,1)*{\OriSquare{25}{}{}{}{}};
(6,1)*{\OriSquare{26}{}{}{}{}};
(7,1)*{\OriSquare{27}{y}{}{}{}};
(4,0)*{\OriSquare{28}{}{}{y}{f}};
(5,0)*{\OriSquare{29}{}{}{}{g}};
(6,0)*{\OriSquare{30}{}{}{}{h}};
(3.5,-.47)*{{\bullet}};
(7.5,.53)*{{\bullet}};
(.5,4.53)*{{\bullet}};
(4.5,3.53)*{{\bullet}};
(3.5,1.53)*{{\circ}};
(2.5,5.53)*{{\circ}};
(-.5,2.53)*{{\circ}};
(7.5,2.53)*{{\circ}};
(3.55,-.4);(4.55,3.6) **@{.}
\end{xy} }}
  \caption{An origami $O$ with 30 squares whose maximal systole in the graph $\Gamma$ of saddle connections is $\sqrt{17}$.
    $O$ has two singularities $\bullet$ and $\circ$. The dashed line is a systole.}
\label{fig:maximalori}
\end{figure}
\end{ex}

We finish this section by constructing an example of an origami in an other stratum for which the length of the systoles in the graph of saddle connections is indeed shorter than the length of the systoles of the translation surface.


\begin{ex}\label{ex:Exception}
  We construct in the following an origami $O$ in the stratum $\HHH(1,1,1,1)$ such that its graph of saddle connections $\Gamma$ has a unique systole which corresponds to a null homotopic curve on the translation surface $S$ defined by $O$. Hence the systole of the translation surface is longer than the systole of the graph of saddle connections.\\
  
Let $O$ be an origami with one horizontal cylinder of height 1 and the gluing pattern shown in \textit{Figure~\ref{fig:ori_graph2}}. Thus we have the eight horizontal saddle connections $a$,  \ldots, $h$. Observe that we obtain four singularities $\blackcircle{1.1mm}$, $\whitecircle{1.1mm}$, $\mathsmaller{\blacksquare}$ and $\mathsmaller{\square}$, each of cone angle 4$\pi$. We choose the lengths of the segments $a$, \ldots, $h$ as follows:
  \[a = 1,\; b = 3,\; c = 3,\; d = 3,\; e = 7,\; f = 3,\; g = 3,\; h = 100.\]
  Thus $O$ is an origami in the stratum $\HHH(1,1,1,1)$  with 123 squares.  Its graph of horizontal saddle connections $\Gamma_{v}$ (with $v = {1 \choose 0}$) is shown on the left side of Figure~\ref{Figure:TheTwoGraphs}. In the following, one should keep in mind that the picture in Figure~\ref{fig:ori_graph2} is not true to scale. 
  \begin{center}
    \begin{figure}[h!]
      \begin{tikzpicture}[scale = .445]
        \tikzstyle{bullet}=[circle,thick,fill=black!,minimum size=2mm]
        \tikzstyle{place}=[circle,thick,draw=black!,fill=white!,minimum size=2mm]
        \tikzstyle{sq}=[rectangle,thick,draw=black!,fill=white!,minimum size=2mm]
        \tikzstyle{sqth}=[rectangle,thick,fill=black!,minimum size=2mm]
        \draw (0,0) -- node[left] {j}( 0,1);
        \draw (0,0) -- (35,0);
        \draw (35,0) -- (35,1);
        \draw (0,1) -- (35,1);
        \node at (1,1) (y1) {};
        \node at (8,1) (y2) {};
        \node at (11,1) (y3) {};
        \node at (14,1) (y4) {};
        \node at (17,1) (y5) {};
        \node at (20,1) (y6) {};
        \node at (32,1) (y7) {};
        \node at (35,1) (y8) {};
        \draw (0,1) -- node[above] {a} (y1);
        \draw (y1) -- node[above] {e} (y2);
        \draw (y2) -- node[above] {g} (y3);
        \draw (y3) -- node[above] {b} (y4);
        \draw (y4) -- node[above] {c} (y5);
        \draw (y5) -- node[above] {f} (y6);
        \draw (y6) -- node[above] {h} (y7);
        \draw (y7) -- node[above] {d} (y8);
        \}
        \node at (12,0) (x1) {};
        \node at (15,0) (x2) {};
        \node at (16,0) (x3) {};
        \node at (19,0) (x4) {};
        \node at (22,0) (x5) {};
        \node at (25,0) (x6) {};
        \node at (28,0) (x7) {};
        \node at (35,0) (x8) {};
        \draw (0,0) -- node[below] {h} (x1);
        \draw (x1) -- node[below] {b} (x2);
        \draw (x2) -- node[below] {a} (x3);
        \draw (x3) -- node[below] {f} (x4);
        \draw (x4) -- node[below] {g} (x5);
        \draw (x5) -- node[below] {d} (x6);
        \draw (x6) -- node[below] {c} (x7);
        \draw (x7) -- node[below] {e} (x8);
        \draw (0,0) -- node[below, right, distance=2in] {k} (1,1);
        \draw (x8) -- node[below, left, distance=5in] {l} (y7);
        \vertex[bullet] at (0,1) {};
        \vertex[bullet] at (x2) {};
        \vertex[bullet] at (y4) {};
        \vertex[bullet] at (x6) {};
        \vertex[bullet] at (y8) {};
        \vertex[place] at (y1) {};
        \vertex[place] at (x3) {};
        \vertex[place] at (y5) {};
        \vertex[place] at (x7) {};
        \vertex[sqth] at (y2) {};
        \vertex[sqth] at (x8) {};
        \vertex[sqth] at (0,0) {};
        \vertex[sqth] at (y6) {};
        \vertex[sqth] at (x4) {};
        \vertex[sq] at (y3) {};
        \vertex[sq] at (x5) {};
        \vertex[sq] at (y7) {};
        \vertex[sq] at (x1) {};
      \end{tikzpicture}      
      \caption{A surface $S$ defined by an origami $O$. Edges with same labels are glued.
        Furthermore, the two vertical edges are glued. $S$ has the four singularities $\blackcircle{1.1mm}$, $\whitecircle{1.1mm}$, $\mathsmaller{\blacksquare}$ and $\mathsmaller{\square}$.}
      \label{fig:ori_graph2}
    \end{figure}
  \end{center}
  For better notation we  identify the rectangle in \textit{Figure~\ref{fig:ori_graph2}} with the rectangle $[0,123] \times [0,1]$ and consider the points $x_0 = (0,0)$, $y_0 = (0,1)$ and $y_1 = (1,1)$. In addition to the horizontal saddle connection labeled by $a$ we obtain the vertical saddle connection $j$ which is the segment from $x_0$ to $y_0$ and the diagonal saddle connection $k$ which is the segment from $x_0$ to $y_1$. Observe that the concatenation of saddle connections $ak^{-}j$ is null homotopic and of length $r = 1 +1  + \sqrt{2} \sim 3.41$. It turns out that it defines the unique systole of the graph $\Gamma$. To see this, we compute using Algorithm I and II the subgraph $\Gamma_{\leq r}$ of $\Gamma$ containing only the edges of $\Gamma$ which correspond to saddle connections of length $\leq r$.
The resulting graph  $\Gamma_{\leq r}$ is shown on the right side of Figure~\ref{Figure:TheTwoGraphs} . The only additional saddle connection that we obtain is the saddle connection $l$ of length $\sqrt{10}$ in direction $v = {-3 \choose 1}$ which starts in the point $x_8 = (123,0)$ and ends in the point $y_7 = (120,1)$. We easily read off from $\Gamma_{\leq r}$ that its unique systole and thus also the unique systole of the full graph $\Gamma$ of saddle connections is  $ak^{-}j$.
The corresponding curve on the surface $S$ is null homotopic.
 \begin{figure}[h!]
  \[ 
          \begin{tikzpicture}[scale=1,<->,>=stealth',shorten >=1pt,auto,node distance=3cm,thick]
            \tikzstyle{place}=[circle,thick,draw=black!,minimum size=3mm]
            \tikzstyle{bullet}=[circle,thick,fill=black!,minimum size=3mm]
                  \tikzstyle{sq}=[rectangle,thick,minimum size=3mm]
        \tikzstyle{sqth}=[rectangle,thick,fill=black!,minimum size=3mm]
            \vertex[sq](p1) at (0,0) {};
            \vertex[sqth](p2) at (4,0) {};
            \vertex[bullet](p3) at (0,4) {};
            \vertex[place](p4) at (4,4) {};
            \path[-] (p1) edge node {g} (p2);
            \path[-] (p2)  edge   [bend left] node {h} (p1);
            \path[-] (p3) edge node {c} (p4);
            \path[-] (p3)  edge   [bend left] node {a} (p4);
            \path[-] (p1) edge node {b} (p3);
            \path[-] (p1)  edge   [bend left] node {d} (p3);
            \path[-] (p2) edge node {e} (p4);
            \path[-] (p2)  edge   [bend right] node {f} (p4);

          \end{tikzpicture}
          \hspace*{15mm}
        \begin{tikzpicture}[scale=1,<->,>=stealth',shorten >=1pt,auto,node distance=3cm,thick]
            \tikzstyle{place}=[circle,thick,draw=black!,minimum size=3mm]
            \tikzstyle{bullet}=[circle,thick,fill=black!,minimum size=3mm]
                  \tikzstyle{sq}=[rectangle,thick,minimum size=3mm]
        \tikzstyle{sqth}=[rectangle,thick,fill=black!,minimum size=3mm]
            \vertex[sq](p1) at (0,0) {};
            \vertex[sqth](p2) at (4,0) {};
            \vertex[bullet](p3) at (0,4) {};
            \vertex[place](p4) at (4,4) {};
            \path[-] (p2) edge [bend left] node {l,$\sqrt{10}$} (p1);
            \path[-] (p1) edge node {g,3} (p2);            
            \path[-] (p4) edge node {c,3} (p3);
            \path[-] (p3)  edge   [bend left] node {a,1} (p4);
            \path[-] (p4)  edge   [bend left = 90] node {k,$\sqrt{2}$} (p2);           
            \path[-] (p3) edge node {b,3} (p1);
            \path[-] (p1)  edge   [bend left] node {d,3} (p3);
            \path[-] (p2)  edge   [bend right] node {f,3} (p4);
             \path[-] (p2) edge node {j,1} (p3);

          \end{tikzpicture}
        \]
        \caption{Left side: Graph $\Gamma_v$ with $v = {1 \choose 0}$. \;\; Right side: Graph $\Gamma_{\leq r}$ with $r =  2 + \sqrt{2}$. The edges are labeled by the name and the length of the saddle connection.}
        \label{Figure:TheTwoGraphs}
 \end{figure}
\end{ex}

\vspace{2cm}

\noindent Tobias Columbus\\
\noindent Institute for Mathematics\\
\noindent AG Algebra and Number Theory\\
\noindent Paderborn University\\
\noindent 33098 Paderborn\\
\noindent Germany\\
\noindent  e-mail: \textit{columbus@math.uni-paderborn.de}\\

\noindent Frank Herrlich\\	
\noindent Institute of Algebra and Geometry\\ 
\noindent Karlsruhe Institute of Technology \\
\noindent 76128 Karlsruhe\\
\noindent Germany\\
\noindent e-mail: \textit{herrlich@kit.edu}\\
\\
\\
\noindent Bjoern Muetzel\\
\noindent Department of Mathematics\\
\noindent Eckerd College, St. Petersburg \\
\noindent Florida 33711 \\
\noindent USA\\
\noindent e-mail: \textit{bjorn.mutzel@gmail.com}\\
\\
\\
\noindent Gabriela Weitze-Schmith\"usen\\	
\noindent Department of Mathematics and Computer Science\\
\noindent Saarland University\\
\noindent 66123 Saarbr\"ucken\\
\noindent Germany\\
\noindent e-mail: \textit{weitze@math.uni-sb.de}\\

\end{document}